\renewcommand{\d}{\text{d}}
\newcommand{\bs}{\backslash}
\newcommand{\Mto}{\xrightarrow{\rm{M}}}
\newcommand{\wto}{\rightharpoonup}
\newcommand{\intt}{\text{\rm{int}}}
\newcommand{\dist}{\text{\rm dist}}
\newcommand{\vol}{\textrm{\rm vol}}
\newcommand{\SCI}{\operatorname{SCI}}
\newcommand{\diam}{\operatorname{diam}}
\newcommand{\diag}{\operatorname{diag}}
\newcommand{\cO}{\mathcal{O}}
\newcommand{\cU}{\mathcal{U}}
\newcommand{\dil}{\operatorname{\red{\mathfrak{N}}}}
\newcommand{\col}[1]{\partial^{#1}}
\newcommand{\out}{\partial^{\text{\rm{out}}}}
\newcommand{\R}{\mathbb R}
\newcommand{\C}{\mathbb C}
\newcommand{\N}{\mathbb N}
\newcommand{\Z}{\mathbb Z}
\newcommand{\dAW}[2]{\mathrm{d}_{\mathrm{AW}}\left(#1,#2\right)}
\newcommand{\dAWonly}{\mathrm{d}_{\mathrm{AW}}}
\newcommand{\Om}{\Omega}
\newcommand{\eps}{\epsilon}
\newcommand{\f}{\frac}
\newcommand{\del}{\partial}
\newcommand{\cl}{\mathrm{cl}}
\newcommand{\mat}{\mathrm{mat}}
\newcommand{\pix}{\mathrm{pix}}
\newcommand{\supp}{\operatorname{supp}}
\newcommand{\clg}{\set{c_{lg}^{(1)},c_{lg}^{(2)}}}
\renewcommand{\subseteq}{\subset}
\renewcommand{\vol}{\mu_{\text{\rm leb}}}
\renewcommand{\Omega}{\mathcal{S}}
\renewcommand{\Om}{\mathcal{S}}
\DeclarePairedDelimiter{\abs}{\lvert}{\rvert}
\DeclarePairedDelimiter{\norm}{\lVert}{\rVert}
\DeclarePairedDelimiter{\floor}{\lfloor}{\rfloor}
\DeclarePairedDelimiter{\ceil}{\lceil}{\rceil}
\DeclarePairedDelimiter{\inner}{\langle}{\rangle}
\DeclarePairedDelimiter{\set}{\lbrace}{\rbrace}
\DeclarePairedDelimiter{\br}{(}{)}
\DeclarePairedDelimiter{\sbr}{[}{]}
\theoremstyle{definition}
\newtheorem{de}{Definition}[section]
\theoremstyle{plain}
\newtheorem{prop}[de]{Proposition}
\newtheorem{lemma}[de]{Lemma}
\newtheorem{theorem}[de]{Theorem}
\theoremstyle{remark}
\newtheorem{remark}[de]{Remark}
\newtheorem*{remark*}{Remark}
\newtheorem{example}{Example}
\numberwithin{equation}{section}
\newlength{\figurewidth}
\newlength{\figureheight}
\newenvironment{Red}{\par\color{black}}{\par}
\newcommand{\red}[1]{\textcolor{black}{#1}}
\newcommand{\rednew}[1]{\textcolor{black}{#1}}
\title{Computing eigenvalues of the Laplacian on rough domains}
\author{Frank R\"osler}
\email{frank.roesler@unibe.ch}
\address{Department of Mathematics, University of Bern, Alpeneggstrasse 22, 3012 Bern, Switzerland}
\author{Alexei Stepanenko}
\email{StepanenkoA@cardiff.ac.uk}
\address{School of Mathematics, Cardiff University, Senghennydd Road, Cardiff CF24 4AG, Wales, UK}
\date{\today}
\thanks{The authors would like to extend their thanks to Jonathan Ben-Artzi and Marco Marletta for many enlightening discussions
  as well as to Victor Burenkov and Simon Chandler-Wilde for their helpful comments.
  \red{The authors also thank the anonymous referees whose helpful comments and suggestions motivated us to improve the presentation of the article
  as well as to prove the sharpness result Theorem \ref{thm:spec-exist} (ii). }
FR acknowledges support from the European Union's Horizon 2020 Research and Innovation Programme under the Marie Sklodowska-Curie grant agreement No 885904.
The research of AS is supported by the United Kingdom Engineering and Physical
Sciences Research Council, through its Doctoral Training Partnership with Cardiff University.
}
\subjclass[2010]{47F10, 35P15, 65N25}
\begin{document}

\begin{abstract}
  We prove a general Mosco convergence theorem for bounded Euclidean domains satisfying a set of mild geometric hypotheses.
  For bounded domains, this notion implies norm-resolvent convergence for the Dirichlet Laplacian which in turn ensures spectral convergence.
  A key element of the proof is the development of a novel, explicit Poincar\'e-type inequality.
  These results allow us to construct a universal algorithm capable of computing the eigenvalues of the Dirichlet Laplacian
  on a wide class of rough domains. Many domains with fractal boundaries, such as the Koch snowflake and certain filled Julia sets, are included among this class.
  Conversely, we construct a counter example showing that there does not exist a universal algorithm of the same type capable
  of computing the eigenvalues of the Dirichlet Laplacian on an arbitrary bounded domain.
\end{abstract}
\maketitle

\section{Introduction}

The purpose of this paper is to investigate numerical methods for computing Dirichlet eigenvalues of bounded domains
with extremely rough, possibly fractal, boundaries and to develop analytical tools for dealing with such problems. 
Following \cite{seashell}, we utilise the framework of \textit{Solvability Complexity Indices (SCI)} \cite{Hansen2011} and consider sequences
of \textit{arithmetic algorithms} $(\Gamma_n)_{n \in \N}$ intended to approximate the spectrum $\sigma(-\Delta_\cO)$ of the Dirichlet Laplacian as $n \to \infty$
on any domain \red{(i.e. a non-empty, open, connected set)} $\cO\subset \R^2$ in a specified \textit{primary set} $\Omega \subset 2^{\R^2}$ (recall $2^{A}$ = power set of set $A$).
The sole input to each arithmetic algorithm $\Gamma_n$ is the information of whether or not a chosen finite number of points lie in the domain $\cO$ 
and the output is a closed subset of $\C$ which should approximate $\sigma(-\Delta_\cO)$ in an appropriate metric.
\red{Each $\Gamma_n$ obtains its output from the input via a finite number of arithmetic operations. The rigorous formulation  will be given in Sections \ref{sec:comp-probl-arithm} and \ref{subsec:comp_spec_prob}.}

The question we ask is: what is the ``largest'' primary set $\Omega$ of bounded domains we can identify such that there exists a single sequence of arithmetic
algorithms computing the Dirichlet eigenvalues of any domain in $\Omega$?
Note that boundary regularity is known to have a very real and highly non-
trivial impact on the spectral properties of the Dirichlet Laplacian.
For instance, in a neighbourhood
of a reentrant corner or a cusp, the eigenfunctions do not necessarily lie in the $H^2$ Sobolev space \cite{grisvardEllipticProblemsNonsmooth2011} \red{and the eigenvalues asymptotics must be modified to account for fractal boundaries (cf. \cite{lapidusFractalDrumInverse1991,fleckinger-pelleExampleTwoTermAsymptotics1993,huaFractalDrumsThendimensional1995,levitinSpectralAsymptoticsRenewal1996,LapidusCounter} and references therein).}

Our first finding shows that there is no hope of constructing a single sequence of arithmetic algorithms capable of computing the Dirichlet eigenvalues
of every bounded domain (cf. Proposition \ref{prop:spec-not-exist}).
The problem of proving the existence of sequences of arithmetic algorithms that do converge is approached via explicit construction.
We shall introduce an approximation $\cO_n$ for a domain $\cO$ which we refer to as a \textit{pixelated domain} for $\cO$ (cf. Definition \ref{defn:pixel}).
Each pixelated domain $\cO_n$ is constructed solely from the information of which points in the grid $(\tfrac{1}{n} \Z)^2$ lie in $\cO$.
Utilising computable error bounds for the finite element method, in particular the results of Liu and Oishi
\cite{LiuOishi2013}, we construct a sequence of arithmetic algorithms which compute the spectrum of the Dirichlet Laplacian on any domain
for which the corresponding pixelated domains converge in the \textit{Mosco sense} (cf. Proposition \ref{prop:alg-mosco}).

\red{
In this way, the problem reduces to the study of Mosco convergence, which occupies much of the paper.
This notion ensures convergence of Dirichlet eigenvalues and of solutions to the Poisson equation
hence has various other applications as well as to the above computational question. }
In Section \ref{sec:mosco-proof}, we prove that if the Hausdorff convergence condition
\begin{equation}
  \label{eq:hauss-cond-intro}
  \d_H(\cO,\cO_n) + \d_H(\partial \cO, \partial \cO_n) \to 0 \quad \text{as} \quad n \to \infty,
\end{equation}
holds and a collection of mild geometric conditions (such as topological regularity of $\cO$) are satisfied, then
$\cO_n$ converges to $\cO$ in the Mosco sense (cf. Theorem \ref{th:mosco}).
This result, which is valid for arbitrary sequences of domains, is applied to pixelated domains thus concluding the identification of a large primary set $\Omega_1$ of
bounded domains for which there exists a corresponding sequence of arithmetic algorithms (cf. Theorem \ref{thm:spec-exist}).
These arithmetic algorithms describe a simple numerical method that is guaranteed to converge on a very wide class of rough domains. 

An intermediate step in the proof of Theorem \ref{th:mosco} is the reduction of Mosco convergence $\cO_n \Mto \cO$
to the establishment of uniform Poincar\'e-type inequalities \red{of the form}
\begin{align}
  \forall u \in C^\infty_c(\cO):  \quad \norm{u}_{L^2(\col{r}\cO)} & \leq C r \norm{\nabla u}_{L^2(\col{\alpha r}\cO)} \label{eq:poin-O-intro} \\
  \forall u \in C^\infty_c(\cO_n): \quad \norm{u}_{L^2(\col{r}\cO_n)} & \leq C r \norm{\nabla u}_{L^2(\col{\alpha r}\cO_n)} \label{eq:poin-On-intro}
\end{align}
for all small enough $r > 0$, where $C, \alpha  > 0$ are numerical constants and $\col{r} \cO := \set{x \in \cO : \dist(x, \partial \cO) < r}$
(cf. Proposition \ref{prop:mosco}).
A Poincar\'e-type inequality of the form (\ref{eq:poin-O-intro}), for a single domain $\cO$, is proved in Section \ref{sec:poin} via
a geometric method involving the construction of a bundle of paths from every point in $\col{r}\cO$ to $\partial \cO$ (cf. Theorem \ref{thm:poin}).
The uniform Poincar\'e-type inequality (\ref{eq:poin-On-intro}), for a sequence of domains $\cO_n$, is established by combining
Theorem \ref{thm:poin} with a characterisation of the geometry of $\partial \cO_n$ for large $n$ (cf. Proposition \ref{prop:large+lonely}).

\subsection*{Organisation of the paper}

In Section \ref{sec:prel-overv-results}, we state our main results and provide preliminaries.
In Section \ref{sec:poin}, we prove an explicit Poincar\'e-type inequality.
Section \ref{sec:mosco} is dedicated to proving Mosco convergence results.
In Section \ref{sec:spectral}, we apply our analytical results to the theory of Solvability Complexity Indices.
In Section \ref{sec:numerical-results}, we illustrate our results with a numerical investigation for the Dirichlet Laplacian
on a filled Julia set.

\subsection*{Notation and conventions}
We shall adopt the following notation, which is \textit{not} necessarily standard
and which will be used frequently throughout the paper. \red{Let $d \in \N$ throughout.}
\begin{itemize}
 \item
  For any $r > 0$ and any set $A \subset \R^d$, the \textit{$r$-collar neighbourhood} $\col{r} A$ is defined (as above) by 
  \begin{equation}
    \label{eq:col-defn}
 \col{r} A := \set{x \in A : \dist(x, \partial A) < r}.
\end{equation}
\item
  For any set $A \subset \R^d$, we let $\#_c(A) \in  \N_0 \cup \set{\infty}$ denote the number of connected components of $A$. 
\item
  For any $r > 0$ and any set $A \subset \R^d$, we define the set $\dil_r(A)$ by 
  \begin{equation}
    \label{eq:dil-defn}
  \dil_r(A) := \set{ x \in \R^d : \dist(x, A) < r}.
\end{equation}
\item  $\sigma(\cO)$ shall denote the spectrum of the Dirichlet Laplacian $-\Delta_\cO$ on $L^2(\cO)$. 
\end{itemize}
 We shall also use the following notation. 
 \begin{itemize}
 \item For every $A \subset \R^d$, $\vol(A)$ denotes the $d$-dimensional Lebesgue outer measure. 
   
\item
  For any open set $U \subseteq \R^d$, 
  \begin{equation*}
    H^1(U) := \set{ u \in L^2(U) : \norm{\nabla u}_{L^2(U)} < \infty },
  \end{equation*}
  \begin{equation}
    \label{eq:H1norm-defn}
    \norm{\cdot}_{H^1(U)} := \br{\norm{\cdot}^2_{L^2(U)} + \norm{\nabla \cdot}^2_{L^2(U)}}^{1/2}
  \end{equation}
  and  $H^1_0(U)$ is defined as the closure of $C^\infty_c(U)$ in $H^1(U)$.
\item 
  For any non-empty, bounded sets $A, B \subset \R^d$, the Hausdorff distance between $A$ and $B$ is defined by
  \begin{equation}
    \label{eq:dH-defn}
  \d_H(A,B) : = \max \set*{ \sup_{x \in A}\dist(x, B) , \sup_{x \in B}\dist(x,A) }. 
\end{equation}
  We define $\d_H(\emptyset,A) = \infty$ for any non-empty bounded open set $A \subset \R^d$ and $\d_H(\emptyset,\emptyset) = 0$.

\item
  \rednew{We let $B_r(x) \subset \R^d$ denote an open ball of radius $ r > 0$ about $x \in \R^d$.}

 \item
  The diameter of a set $A \subset \R^d$ is denoted by
  \begin{equation}
    \label{eq:diam-defn}
     \diam(A) := \sup_{x \in A} \sup_{y \in A} |x - y| \in [0,\infty) \cup \set{\infty}.
   \end{equation}  
    
\end{itemize}

\section{Preliminaries and overview of results}\label{sec:prel-overv-results}
This section is devoted to providing preliminaries, stating our main results and
reviewing some closely related literature. 
In Sections \ref{subsec:mosco-intro} and \ref{subsec:poin-intro} we present our
analytical results on Mosco convergence and Poincar\'e-type inequalities respectively.
An introduction to the theory of Solvability Complexity Indices is given in Section \ref{sec:comp-probl-arithm} and we state our results
on the computational complexity of the eigenvalue problem in Section \ref{subsec:comp_spec_prob}.

\subsection{Mosco convergence}\label{subsec:mosco-intro}

The question of whether a given approximation for a domain gives a reliable spectral approximation for the Dirichlet Laplacian
leads us to study \textit{Mosco convergence}.
We shall give the definition for $H^1_0$ Sobolev spaces on Euclidean domains (as in \cite[Defn. 1.1]{danersDirichletProblemsVarying2003})
but the notion can be more generally formulated for convex subsets of Banach spaces \cite{moscoConvergenceConvexSets1969}.

\begin{de}\label{defn:mosco-convergence}
  The sequence of open sets $\cO_n \subseteq \R^d,\,n \in \N$, converges to an open set $\cO \subseteq \R^d$ in the \textit{Mosco sense},
  denoted by $\cO_n \Mto \cO$ as $n \to \infty$, if:
    \begin{enumerate}
     \item Any weak limit point $u$ of a sequence $u_n \in H_0^1(\cO_n)$, $n \in \N$, satisfies $u \in H_0^1(\cO)$.
     \item  For every $u \in H^1_0(\cO)$ there exists $u_n \in H^1_0(\cO_n)$ such that $u_n \to u$  as $n \to \infty$ in $H^1(\R^d)$.
    \end{enumerate}
\end{de}

Note that a function $u \in H^1_0(\cO)$ may be realised as a function in $H^1(\R^d)$ via extension by zero.
For an arbitrary open set $\cO \subset \R^d$, one may realise the Dirichlet Laplacian $- \Delta_\cO$ as a positive, self-adjoint operator
on $L^2(\cO)$  \cite[Th. VI.1.4]{EE}. 
In the case that $\cO$ is bounded, the Dirichlet Laplacian has compact resolvent hence purely discrete spectrum.

Provided the open sets $\cO \subset \R^d$ and $\cO_n \subset \R^d$, $n \in \N$, are bounded, Mosco convergence $\cO_n \Mto \cO$ as $n \to \infty$
implies that $- \Delta_{\cO_n}$ converges to $- \Delta_{\cO}$ in the norm-resolvent sense as $n \to \infty$ \cite[Th. 3.3 and 3.5]{danersDirichletProblemsVarying2003}.
In turn, norm-resolvent convergence implies spectral convergence \red{in the sense of the following lemma} \cite[Th. VIII.23]{reedMethodsModernMathematical2012}.

\begin{lemma}\label{lem:mosco-implies-Hauss}
  If  $\cO \subset \R^d$ and $\cO_n \subset \R^d$, $n \in \N$, are open and bounded, and $\cO_n \Mto \cO$ as $n \to \infty$,
  then for every bounded, open $S \subset \R$,
  \begin{equation*}
    \d_H(\sigma(\cO_n)\cap S, \sigma(\cO)\cap S) \to 0 \quad \text{as} \quad n \to \infty.
  \end{equation*}
\end{lemma}
An open set $\cO \subset \R^d$ is said to be \textit{regular} if
\begin{equation}
  \label{eq:regular-defn}
  \cO  = \intt(\overline{\cO}).
\end{equation}
For a bounded open set $\cO \subset \R^d$, the quantity \red{$Q(\partial \cO) \geq 0$} is defined by
\begin{equation}
  \label{eq:min-comp-diam}
  Q(\partial \cO) := \inf \set*{\diam(\Gamma) : \Gamma \subseteq \partial \cO \text{ path-connected component of }\partial \cO}.
\end{equation}
Recall that $\#_c$ denotes the number of connected components.
Recall that a set $A \subset \R^d$ is \textit{locally connected} if for every $x \in A$, there exists an open neighbourhood $U \subset \R^d$ of $x$ such that
$U \cap A$ is connected.
In Section \ref{sec:mosco-proof}, we shall prove the following result, which provides geometric hypotheses ensuring Mosco convergence for domains in $\R^2$.

\begin{theorem}\label{th:mosco}
  Suppose that $\cO \subset \R^2$ is a bounded, connected, regular open set such that $\vol(\partial \cO) = 0$,
  $Q(\partial \cO) > 0$ and $\#_c \intt(\cO^c) = \#_c (\cO^c) < \infty$.
  Suppose that $\cO_n \subset \R^2$, $n \in \N$, is a collection of bounded, open sets such that  $\partial \cO_n$ is locally connected for all $n \in \N$
  and such that
  \begin{equation}
     \d_H(\cO,\cO_n) + \d_H(\partial \cO,\partial \cO_n) \to 0 \quad \text{as} \quad n \to \infty.
  \end{equation}
  Then, $\cO_n$ converges to $\cO$ in the Mosco sense as $n \to \infty$.
\end{theorem}

The condition $Q(\partial \cO) > 0$ in the above theorem can be replaced by the condition that
each connected component of $\partial \cO$ is path-connected (cf. Remark \ref{rem:Q-hypothesis-replacement}).
In turn, the latter condition is satisfied if $\partial \cO$ is locally connected \cite[\S 16]{milnorDynamicsOneComplex1990}.
A sufficient condition for the hypothesis $\vol(\partial \cO) = 0$ is $\dim_H(\partial \cO) < 2$ where $\dim_H$ denotes the Hausdorff dimension \cite{falconerFractalGeometryMathematical2004}. 
The condition  $\#_c \intt(\cO^c) = \#_c(\cO^c) < \infty$ intuitively states that $\cO$ has a finite number of holes, which neither touch
each other nor the outer boundary component of the domain.

\subsubsection*{Examples}
The following canonical classes of domains satisfy the hypotheses of Theorem \ref{th:mosco}.
The first example includes the  classical Koch snowflake domain.
One could also modify this example to allow for domains with holes. 
\begin{example}[Interior of a Jordan curve]
  \label{ex:jordan-curve}
  Let $C \subset \R^2$ be any Jordan curve with $\vol(C) = 0$. By the Jordan curve theorem, $\R^2 \bs C$ is a disjoint union of two open, connected sets -
  a bounded interior $\cO$ and an unbounded exterior $S_{\text{ext}}$. Then, $\cO$ satisfies the hypotheses of Theorem \ref{th:mosco}.
  \begin{proof}
  It is known that $\partial \cO = C$, hence it holds that $\vol(\partial \cO) = 0$ and $Q(\partial \cO) > 0$.
  It is also known that $\partial(S_{\text{ext}}) = C$, hence any open set $U \subset \R^2$ satisfies either $U \subset \cO$ or $U \cap S_{\text{ext}} \neq \emptyset$.
  From this it follows that $\intt(\overline{\cO}) = \cO$, that is, $\cO$ is regular.
  Similarly, we have that $\intt(\cO^c) = S_{\text{ext}}$ so $\#_c \intt(\cO^c) = \#_c (\cO^c) = 1$.
  \end{proof}
\end{example}
The second example is a concrete special case of the above class of domains and
is the object study in a numerical investigation in Section \ref{sec:numerical-results}.
We naturally identify $\C \cong \R^2$.
\begin{example}[Filled Julia sets with connected interior]\label{ex:filled-julia}
  Let $f_c(z) := z^2 + c$, where $c \in \C$ satisfies $|c| <  \tfrac{1}{4}$.
  Consider the \textit{filled Julia set}
  \begin{equation}
    K(f_c) := \set{z \in \C : (f_c^{\circ n}(z))_{n \in \N} \text{ bounded}}
  \end{equation}
  where $f^{\circ n}(z) := \underbrace{f \circ \cdots \circ f}_{\red{n \text{  times}}} (z)$.
  The domain $\cO = \intt(K(f_c))$ satisfies the hypotheses of Theorem \ref{th:mosco}.
  \begin{proof}
  Firstly, $K(f_c)$ is compact and $\partial \cO = \partial K(f_c) = J(f_c)$, where $J(f_c)$ is the so-called \textit{Julia set} for $f_c$ \cite[Lem. 17.1]{milnorDynamicsOneComplex1990}.
    The Julia set can be thought of as the set of $z \in \C$ for which the dynamics of $f_c^{\circ{n}}(z)$ is chaotic.
    Since $|c| < \tfrac{1}{4}$, it is known that $J(f_c)$ is a Jordan curve \cite[Th. 14.16]{falconerFractalGeometryMathematical2004}.
    By Example \ref{ex:jordan-curve}, it suffices that $\vol(J(f_c)) = 0$.

    One may show that $B_{1/4}(0) \subset K(f_c)$ \cite[Ex. 14.3]{falconerFractalGeometryMathematical2004} hence $|f_c'(z)| > 0$ for every $z \in J(f_c)$,
    that is, there are no critical points on the Julia set.
    It turns out that this is enough to ensure that the Lebesgue measure of the Julia set vanishes (cf. \cite[pg. 2]{Buff_conference}
    and references therein). 
  \end{proof}
\end{example}
On the other hand, consider the \textit{Mandelbrot set}
\begin{equation}
  \label{eq:mandelbrot}
  M := \set{c \in \C : (f_c(0))_{n \in \N} \text{ bounded}}.
\end{equation}
Then, the domains $\cO = \intt(M)$ and $\cO = B_{X}(0) \bs M$ (where $X > \diam(M)$) do \textit{not} satisfy
the hypotheses of Theorem \ref{th:mosco}, since $\#_c\intt(M) = \infty$.
The questions of whether $\vol(\partial M) = 0$ and $\partial M$ is path connected are major open
problems, the latter being implied by the famous MLC conjecture (MLC = Mandelbrot set locally connected) \cite{douadyExploringMandelbrotSet}. 

\subsubsection*{Comparison to known results}

Let us now discuss some related results in the literature.
Firstly, it is known that nested approximations converge in the Mosco sense, that is,
for $\cO \subseteq \R^d$ and $\cO_n \subseteq \R^d$, $n \in \N$, open we have
\begin{equation*}
  \forall n \in \N:\, \cO_n \subseteq \cO_{n+1} \subseteq \cO \quad \text{and}\quad  \cO = \bigcup_{n=1}^\infty \cO_n \quad \Rightarrow \quad \cO_n \Mto \cO \quad \text{as}\quad  n \to \infty.
\end{equation*}
For non-nested approximations, such as those we consider in our study of the computational eigenvalue problem,
Mosco convergence is more difficult to prove.

An open set $\cO \subset \R^d$ is said to be \textit{stable} if \cite[Defn. 5.4.1]{Daners_Domain}
\begin{equation}\label{eq:stability}
  H^1_0(\cO) = H^1_0(\overline{\cO}) := \set{u|_ \cO: u \in H^1(\R^d),\, u = 0 \,\text{ a.e. on } \overline{\cO}^c}.
\end{equation}
This notion allows for the application of powerful spectral convergence results
\cite{rauchPotentialScatteringTheory1975,danersDirichletProblemsVarying2003,dancerRemarksClassicalProblems1996a}. 
It may be characterised in terms of stability of the Dirichlet problems \cite{Arendt2008}
 and in terms of capacities \cite{danersDirichletProblemsVarying2003}\cite[Ch. 11]{Hedberg2012}.
A sufficient geometric condition that ensures that a domain is stable is that it is bounded and
the boundary is locally the image of a continuous map \cite[Prop. 2.2]{Arendt2008}.
Spectral convergence results, with convergence rates, have also been obtained for the Laplacian on Reifenberg-flat domains \cite{lemenantSpectralStability2013}
and for more general non-negative, elliptic, self-adjoint operators \cite{daviesSharpBoundaryEstimates2000,burenkovSpectralStabilityNonnegative2008} .
As far as we are aware these results are not applicable to non-nested approximations of domains with fractal boundary. 

\begin{Red}
Recently, there has been a wealth of activity in the study of Mosco convergence for rough domains. 
In \cite{hinz-eps-inf}, the authors consider quadratic forms for the Helmholtz equation (with mixed boundary conditions) on $(\epsilon, \infty)$-uniform domains,
and prove a Mosco convergence result.
Note that the definition of Mosco convergence for forms \cite[Defn 2.1.1]{moscoCompositeMediaAsymptotic1994} differs from Definition \ref{defn:mosco-convergence}
but also implies spectral convergence results, for the associated operators \cite[Col. 2.7.1]{moscoCompositeMediaAsymptotic1994}.
Also, in \cite{HewettDensity}, the authors prove stability (in essentially the sense of (\ref{eq:stability})) for a variety of Sobolev spaces,
including $H^1_0$, in the case that the domain is thick in the Triebel sense.
Although there are various examples of $(\epsilon, \infty)$ and thick domains with fractal boundaries,
there exists domains satisfying the hypotheses of Theorem \ref{th:mosco} that do not satisfy these assumptions,
in particular, domains with cusps \cite[Remark 3.7 and Remark 4.9]{Triebel}. 
Apart from the computational questions we consider, there are a variety of applications of such Mosco convergence results,
including shape optimisation \cite{hinz-eps-inf} and numerical methods for scattering by fractal screens \cite{chandler-wildeBoundaryElementMethods2021}.  
\end{Red}

\subsection{An explicit Poincar\'e-type inequality}\label{subsec:poin-intro}

A key ingredient for the proof of Theorem \ref{th:mosco} is a Poincar\'e-type inequality for collar neighbourhoods of the boundary of a domain.
Theorem \ref{thm:poin} provides a bound with an explicit constant which is independent of the particular domain $\cO$. 	
As far as the authors are aware, this is the first Poincar\'e-type inequality of its form to be reported.
The proof is given in Section \ref{sec:proof-poin}.
Recall that $Q(\partial \cO)$ is defined by (\ref{eq:min-comp-diam}).
\begin{theorem}\label{thm:poin}
  Let $\cO \subseteq \R^2$ be any open set.
  If $Q(\partial \cO) > 0$ and $r > 0$ satisfies $4 \sqrt{2} r < Q(\partial \cO)$, then
  \begin{equation}\label{eq:poin-ineq-intro}
    \norm{u}_{L^2(\col{r}\cO)} \leq 10 \sqrt{3} r \norm{\nabla u}_{L^2(\col{2\sqrt{2}r}\cO)}
  \end{equation}
  for all $u \in H^1_0(\cO)$.
\end{theorem}
\red{
Note that the hypotheses for this result are weaker than those of Theorem \ref{th:mosco}, in particular, we do not require boundedness of the domain. 
}

\subsubsection*{Comparison to known results}

Precise bounds have recently been obtained in terms of \emph{Hardy inequalities}
(see \cite{balinsky, barbatis, kinnunen, brezis_hardy, ward} and the references therein).
These are bounds on the $L^p$ norm of $ u /\eta$ in terms of $\nabla u$, where $u\in W^{1,p}_0(\cO)$ and $\eta(x) = \dist(x,\del\cO)$.
Classically, the domain $\cO$ is assumed to be of class $C^1$, but relaxations are possible (see \cite{balinsky, ward} for an overview).
Hardy-type inequalities have also been studied in connection with questions of spectral convergence \cite{daviesSharpBoundaryEstimates2000}.

We mention the following result from \cite{kinnunen}\cite[Th. 3.4.8]{ward}.

\vspace{3pt}
\noindent
\textit{If $d\geq 2$ and $\cO\subset\R^d$ is open, connected, such that $\R^d\setminus\cO$ is connected and unbounded, then there exists $C>0$ such that for all $u\in W^{1,d}_0(\cO)$
	\begin{align}\label{eq:ward}
		\left\| \f{u}{\eta} \right\|_{L^d(\cO)} \leq C \left\| \nabla u \right\|_{L^d(\cO)}.
	\end{align}
        }

\noindent
The connectedness assumption on $\R^d\setminus\cO$ can be replaced by the weaker, technical condition of so-called \emph{uniform $m$-fatness} (cf. \cite[Th. 4.1]{kinnunen})
Applying inequality (\ref{eq:ward}) to the case $d=2$ immediately yields the bound $\| u\|_{L^2(\del^r\cO)} \leq Cr \| \nabla u \|_{L^2(\cO)}$ for a $r$-collar neighbourhood of $\del\cO$.
Note that this statement is weaker than Theorem \ref{thm:poin} in two ways: first, the constant $C$ is neither explicit, nor independent of $\cO$ and second, the $L^2$-norm of $\nabla u$ is over the entire domain $\cO$, rather than a neighbourhood of $\del\cO$.
These differences are key for application to our proof of Theorem \ref{th:mosco}.

 \subsection{Computational problems and arithmetic algorithms}\label{sec:comp-probl-arithm}

The theory of the Solvability Complexity Index (SCI) hierarchy was developed in \cite{AHCS2020, Hansen2011}, building on works of Smale, McMullen and Doyle \cite{smale1981,smale1985, mcmullen1987, mcmullen1988, doyle1989}. Broadly speaking, it studies the question \emph{Given a class $\Omega$ of computational problems, can the solutions always be computed by an algorithm?} In order to give a rigorous formulation of this question, it is necessary to introduce precise definitions of the terms ``computational problem'' and ``algorithm'' (the reader may think of a Turing machine for the time being). We will give a brief review of the central elements of the theory here and refer to \cite{AHCS2020, scishort} for further details.

\begin{Red}

\begin{de}[Computational problem]\label{def:computational_problem}
	A \emph{computational problem} is a quadruple $(\Omega,\Lambda,\mathcal M,\Xi)$, where 
	\begin{enumerate}
		\item[(i)] $\Omega$ is a set, called the \emph{primary set},
		\item[(ii)] $\Lambda$ is a set of complex-valued functions on $\Omega$, called the \emph{evaluation set},
		\item[(iii)] $\mathcal M$ is a metric space,
		\item[(iv)] $\Xi:\Omega\to \mathcal M$ is a map, called the \emph{problem function}.
	\end{enumerate}
\end{de}
      
Intuitively, elements of the primary set $\Omega$ are the objects giving rise to the computational problems, the evaluation set $\Lambda$ represents the information available
to an algorithm, the metric space $\mathcal{M}$ is the possible outputs of an algorithm and the problem function $\Xi$ represents the true solutions of the computational problems.

\begin{example}\label{exmaple:SCI}
	An instructive example of a computational problem in the sense of Definition \ref{def:computational_problem} is given by the following data. Let $\Om=\mathcal B(\ell^2(\N))$, the bounded operators on the space of square summable sequences, $\Lambda = \{A \mapsto \langle Ae_i,e_j\rangle_{\ell^2}\, :\, i,j\in\N\}$ the set of matrix elements in the canonical basis, $\mathcal M = (\operatorname{comp}(\C),d_{\mathrm H})$ the compact subsets of $\C$, together with the Hausdorff distance $d_{\mathrm H}$, and finally $\Xi(A)=\sigma(A)$, the spectrum of an operator. In words, this computational problem reads ``Compute the spectrum of a bounded operator on $\ell^2(\N)$ using its matrix entries as an input.''
\end{example}

Now we are in position to define the notion of a \emph{general algorithm}. The next definition is rather generic in nature in order to capture all instances of what is generally thought of as a computer algorithm.
\begin{de}[General algorithm]\label{def:Algorithm}
	Let $(\Omega,\Lambda,\mathcal M,\Xi)$ be a computational problem. A \emph{general algorithm} is a mapping $\Gamma:\Om\to\mathcal M$ such that for each $T\in\Om$ 
	\begin{enumerate}
		\item[(i)] there exists a finite (non-empty) subset $\Lambda_\Gamma(T)\subset\Lambda$,
		\item[(ii)] the action of $\Gamma$ on $T$ depends only on $\{f(T)\}_{f\in\Lambda_\Gamma(T)}$,
		\item[(iii)] for every $S\in\Om$ with $f(T)=f(S)$ for all $f\in\Lambda_\Gamma(T)$ one has $\Lambda_\Gamma(S)=\Lambda_\Gamma(T)$.
	\end{enumerate}
\end{de}
We will sometimes write $\Gamma(T) = \Gamma(\{f(T)\}_{f\in\Lambda_\Gamma(T)})$ to emphasise point (ii) above: the output $\Gamma(T)$ depends only on the (finitely many) evaluations $\{f(T)\}_{f\in\Lambda_\Gamma(T)}$.
\addtocounter{example}{-1}
\begin{example}[continued]
	The solvability of the computational problem defined in this example is thus equivalent to the existence of a sequence of algorithms $(\Gamma_n)_{n\in\N}$, where $\Gamma_n:\mathcal B(\ell^2(\N))\to \operatorname{comp}(\C)$ such that (i)-(iii) of Definition \ref{def:Algorithm} are satisfied and $d_{\mathrm H}(\Gamma_n(A),\sigma(A))\to 0$ as $n\to\infty$ for all $A\in \mathcal B(\ell^2(\N))$. In particular, for each fixed $n\in\N$, the image $\Gamma_n(A)$ must be computable from finitely many matrix elements of $A$.
\end{example}
In \cite{Hansen2011}, Hansen showed that it \emph{is} possible to compute $\sigma(A)$ for $A\in\mathcal B(\ell^2(\N))$ as above. However, rather than having algorithms $\Gamma_n$ with a single index $n\in\N$,   \emph{three} indices were required, satisfying $\sigma(A)=\lim_{n_3\to\infty}\lim_{n_2\to\infty}\lim_{n_1\to\infty}\Gamma_{n_1,n_2,n_3}(A)$. The algorithms $\Gamma_{n_1,n_2,n_3}$ are given explicitly, and can be implemented numerically.  In \cite{AHCS2020} it was proved that this is optimal: this computation cannot be performed with fewer than $3$ limits.

We formalise the foregoing example with the following definitions:
\begin{de}[Tower of general algorithms]\label{def:Tower}
	Let $(\Omega,\Lambda,\mathcal M,\Xi)$ be a computational problem. A \emph{tower of general algorithms} of height $k$ for $(\Omega,\Lambda,\mathcal M,\Xi)$ is a family $\Gamma_{n_k,n_{k-1},\dots,n_1}:\Omega\to\mathcal M$ of general algorithms (where $n_i\in\N$ for $1\leq i \leq k$) such that for all $T\in\Om$
	\begin{align*}
		\Xi(T) = \lim_{n_k\to+\infty}\cdots\lim_{n_1\to+\infty}\Gamma_{n_k,\dots,n_1}(T).
	\end{align*}
\end{de}
\begin{de}[Recursiveness]\label{def:recursive}
Suppose that for all $f\in\Lambda$ and for all $T\in\Omega$ we have $f(T)\in \R$ or $\C$. We say that $\Gamma=\Gamma_{n_k,n_{k-1},\dots,n_1}$ is \emph{recursive} if $\Gamma_{n_k,n_{k-1},\dots,n_1}(\{f(T)\}_{f\in\Lambda_\Gamma(T)})$ can be executed by a Blum-Shub-Smale (BSS) machine \cite{BSS} that takes $(n_1,n_2,\dots,n_k)$ as input and that has an oracle that can access  $f(T)$ for any $f\in\Lambda$.
\end{de}

\begin{de}[Tower of arithmetic algorithms]\label{def:Arithmetic-Tower}
Given a computational problem $(\Omega,\Lambda,\mathcal M,\Xi)$, where $\Lambda$ is countable, an \emph{tower of arithmetic algorithms}  for $(\Omega,\Lambda,\mathcal M,\Xi)$ is a general tower of algorithms where the lowest mappings $\Gamma_{n_k,\dots,n_1}:\Omega\to\mathcal{M}$ satisfy the following:
For each $T\in\Omega$ the mapping $\N^k\ni(n_1,\dots,n_k)\mapsto  \Gamma_{n_k,\dots,n_1}(T)=\Gamma_{n_k,\dots,n_1}(\{f(T)\}_{f\in\Lambda(T)})$ is recursive, and $\Gamma_{n_k,\dots,n_1}(T)$ is a finite string of complex numbers that can be identified with an element in $\mathcal{M}$.
\end{de}
A tower of arithmetic algorithms of height 0 corresponds to a single map $\Gamma: \Omega \to \mathcal{M}$ and is referred to simply as an \emph{arithmetic algorithm}.
\begin{remark}[Types of towers]
One can define many types of towers, see \cite{AHCS2020}. In this paper we write \emph{type $G$} as shorthand for a tower of \emph{general} algorithms, and \emph{type $A$} as shorthand for a tower of \emph{arithmetic} algorithms. If a tower $\{\Gamma_{n_k,n_{k-1},\dots,n_1}\}_{n_i\in\N,\ 1\leq i\leq k}$ is of type $\tau$ (where $\tau\in\{A,G\}$ in this paper) then we write
	\begin{equation*}
	\{\Gamma_{n_k,n_{k-1},\dots,n_1}\}\in\tau.
	\end{equation*}
\end{remark}
\begin{de}[SCI]\label{de:SCI}
	A computational problem $(\Omega,\Lambda,\mathcal M,\Xi)$ is said to have a \emph{Solvability Complexity Index (SCI)} of $k$ with respect to a tower of algorithms of type $\tau$ if $k$ is the smallest integer for which there exists a tower of algorithms of type $\tau$ of height $k$ for $(\Omega,\Lambda,\mathcal M,\Xi)$. We then write 
	\begin{align*}
 		\SCI(\Omega,\Lambda,\mathcal M,\Xi)_\tau=k.
	\end{align*}
	If there exists a tower $\{\Gamma_n\}_{n\in\N}\in\tau$ and a finite $N_1\in\N$ such that $\Xi=\Gamma_{N_1}$ then we define $\SCI(\Omega,\Lambda,\mathcal M,\Xi)_\tau = 0$.
\end{de}
Definition \ref{de:SCI} naturally places computational problems into a \emph{hierarchy}: the higher the SCI of a problem, the more limits are needed to solve it, thus the higher its computational complexity.
\begin{de}[The SCI Hierarchy]
\label{1st_SCI}
The \emph{$\SCI$ Hierarchy} is a hierarchy $\{\Delta_k^\tau\}_{k\in{\N_0}}$ of classes of computational problems $(\Om,\Lambda,\Xi,\mathcal M)$, where each $\Delta_k^\tau$ is defined as the collection of all computational problems satisfying:
\begin{align*}
(\Om,\Lambda,\Xi,\mathcal M)\in\Delta_0^\tau\quad &\qquad\Longleftrightarrow\qquad \mathrm{SCI}(\Om,\Lambda,\Xi,\mathcal M)_\tau= 0,\\
(\Om,\Lambda,\Xi,\mathcal M)\in\Delta_{k+1}^\tau &\qquad\Longleftrightarrow\qquad \mathrm{SCI}(\Om,\Lambda,\Xi,\mathcal M)_\tau\leq k,\qquad k\in\N,
\end{align*}
with the special class $\Delta_1^\tau$  defined as the class of all computational problems in $\Delta_2^\tau$ for which we have explicit error control:
\begin{equation*}
(\Om,\Lambda,\Xi,\mathcal M)\in\Delta_{1}^\tau \qquad\Longleftrightarrow\qquad
 \exists  \{\Gamma_n\}_{n\in \mathbb{N}}, \quad\text{ s.t. }\quad \forall  T\in\Omega, \ d(\Gamma_n(T),\Xi(T)) \leq 2^{-n}.
\end{equation*}
Hence we have that $\Delta_0^\tau\subset\Delta_1^\tau\subset\Delta_2^\tau\subset\cdots$.
\end{de}

When the metric space $\mathcal{M}$ has certain ordering properties, one can define further classes that take into account convergence from below/above and associated error bounds. In order to not burden the reader with unnecessary definitions, we provide the definition that is relevant to the case where $\mathcal{M}$ is the space of closed  subsets of $\R^d$ together with the Attouch-Wets distance \cite{beerTopologies}  (for a more comprehensive and abstract definition we refer to \cite{AHCS2020}), which is defined as follows:

\begin{de}[Attouch-Wets distance]\label{def:att-wet} Let $A,B$ be closed, nonempty sets in $\R^d$. The  \emph{Attouch-Wets distance} between them is defined as
\begin{align}\label{eq:att-wet}
	d_{\text{AW}}(A,B) = \sum_{k=1}^\infty 2^{-k}\min\left\{ 1\,,\,\sup_{|x|<k}\left| \dist(x,A) - \dist(x,B) \right| \right\}.
\end{align} 
Note that if $A,B\subset\R^d$ are bounded, then $d_{\text{AW}}$ is equivalent to the Hausdorff distance.
\end{de}
\begin{de}[The SCI Hierarchy (Attouch-Wets  metric)]
\label{def:pi-sigma}
Consider the setup in Definition \ref{1st_SCI} assuming further that $\mathcal{M}=(\mathrm{cl}(\R^d),d_{\mathrm{AW}})$.   Then for $k\in\N$ we can define the following subsets of $\Delta_{k+1}^\tau$:
\begin{align}
\begin{split}\label{eq:Sigma_k_def}
	\Sigma_{k}^\tau
	=
	\Big\{(\Om,\Lambda,\Xi,\mathcal M) \in \Delta_{k+1}^\tau \ &: \  \exists\{ \Gamma_{n_k,\dots,n_1}\}\in\tau\text{ s.t. }    \forall T \in \Omega,\, \exists \{X_{n_k}(T)\}\subset\mathcal{M}, \text{ s.t. } 
	\\
	&\; \lim_{n_k\to\infty}\cdots\lim_{n_1\to\infty}\Gamma_{n_k,\dots,n_1}(T)=\Xi(T),
	\\
	& \lim_{n_{k-1}\to\infty}\!\cdots\lim_{n_1\to\infty}\Gamma_{n_k,\dots,n_1}(T)\subset X_{n_k}(T),
	\\
	&\; d\left(X_{n_k}(T),\Xi(T)\right)\leq 2^{-n_k} \Big\},
\end{split}
	\\[1mm]
\begin{split}\label{eq:Pi_k_def}
	\Pi_{k}^\tau
	=
	\Big\{(\Om,\Lambda,\Xi,\mathcal M) \in \Delta_{k+1}^\tau \ &: \ \exists\{ \Gamma_{n_k,\dots,n_1}\}\in\tau\text{ s.t. }    \forall T \in \Omega,\, \exists \{X_{n_k}(T)\}\subset\mathcal{M}, \text{ s.t. } 
	\\
	& \lim_{n_k\to\infty}\cdots\lim_{n_1\to\infty}\Gamma_{n_k,\dots,n_1}(T)=\Xi(T),
	\\
	&\; \Xi(T)\subset X_{n_k}(T),
	\\
	&\; d\Bigl(X_{n_k}(T),\lim_{n_{k-1}\to\infty}\cdots\lim_{n_1\to\infty}\Gamma_{n_k,\dots,n_1}(T)\Bigr)\leq 2^{-n_k} \Big\}.
\end{split}
\end{align}
It can be shown that $\Delta_k^\tau=\Sigma_k^\tau\cap\Pi_k^\tau$ for $k\in\{1,2,3\}$ (we  refer to \cite{AHCS2020} for a detailed treatise).
\end{de}
Informally, these sets can be characterised as follows:

	\begin{itemize}
	\item[$\Delta_k^\tau:$]
	For $k\geq2$, $\Delta_k^\tau$ is the class of problems that require  at most $k-1$ successive limits to solve. We also say that these problem have an $\SCI$ value of at most $k-1$. Problems in $\Delta_1^\tau$ can be solved in one limit with known error bounds.
	\item[$\Sigma_k^\tau:$]
	For all $k\in\N$, $\Sigma_k^\tau\subset\Delta_{k+1}^\tau$ is the class of problems in $\Delta_{k+1}^\tau$ that can be approximated from ``below'' with known error bounds. 
	\item[$\Pi_k^\tau:$]
	For all $k\in\N$, $\Pi_k^\tau\subset\Delta_{k+1}^\tau$ is the class of problems in $\Delta_{k+1}^\tau$ that can be approximated from ``above'' with known error bounds.
	\end{itemize}
By an approximation from ``above''  (resp. ``below'') we mean that the output of the algorithm is a superset (resp. subset) of the object we are computing (this clearly requires that this object and its approximations  belong to a certain  topological space).

\end{Red}

Several kinds of computational (spectral and other) problems have been classified in the SCI hierarchy in recent years, not just in the abstract bounded setting of Example \ref{exmaple:SCI}, but also in more applied PDE problems. Recent results include classification of abstract spectral problems \cite{AHCS2020, Colbrook2019a}, spectral problems (forward and inverse) for PDEs on $\R^d$ \cite{AHCS2020,Ben-Artzi:2022aa,BMR_inv22,Colbrook2019,Colbrook2019c,RoslerSchrod, RT22}, resonance problems for potential scattering \cite{BMR2020} and obstacle scattering \cite{seashell}. Furthermore, SCI has been applied to other problems, such as those arising in AI \cite{sciai2,sciai1,sciai3}. The computability of spectral problems on domains in $\R^d$ and its relation to boundary regularity has not yet been studied as far as the authors are aware.

%



\subsection{Computational eigenvalue problem for the Laplacian}\label{subsec:comp_spec_prob}

Now we describe our contribution to the SCI hierarchy.
\subsubsection*{Statement of SCI results}
We shall consider the following computational problem.
\begin{enumerate}[label= \rm(\Alph*)]
 \item \red{The primary set is the set of bounded domains,}
 \begin{equation*}
  \Omega_0 := \set*{\cO \subset \R^2 : \cO \text{ open, bounded and connected} }.
 \end{equation*}
\item 
The evaluation set is 
\begin{equation*}
 \Lambda_0 := \set*{ \cO \mapsto \chi_\cO(x) : x \in \R^2 }
\end{equation*}
where $\chi$ is the characteristic function.
\item 
  The metric space is $\mathcal{M} := (\textrm{cl}(\C), \dAWonly)$, where $\cl(\C)$ denotes the set of closed, nonempty subsets of $\C$ and $\dAWonly$ denotes the
  Attouch-Wets metric.
  Note that the spectrum of the Dirichlet Laplacian on a bounded domain is always closed and nonempty by classical results.
\item The problem function $\Xi_\sigma: \Omega \to \mathcal{M}$ is defined by $\Xi_\sigma(\cO) := \sigma(\cO)$, where recall that $\sigma(\cO)$ denotes the spectrum of the Dirichlet Laplacian $-\Delta_\cO$ on $L^2(\cO)$.
\end{enumerate}

The following result follows immediately from Proposition \ref{prop:counterexample}.
The proof is based on the construction of a certain counter-example which ``fools" a sequence of arithmetic algorithms aiming to compute the spectrum
on an arbitrary domain in $\Omega_0$.

\begin{Red}
\begin{prop}\label{prop:spec-not-exist}
 There does not exist a sequence of general algorithms  $\Gamma_n:\Omega_0 \to \cl(\C)$  which satisfy
 \begin{equation*}
    \dAW{\Gamma_n(\cO)}{\sigma(\cO)} \to 0 \quad \text{as} \quad n \to \infty \qquad \text{for all} \qquad  \cO \in \Omega_0. 
  \end{equation*}
  That is,
  \begin{equation*}
    (\Omega_0, \Lambda_0, \mathcal{M},\Xi_\sigma) \notin\Delta_2^G.
  \end{equation*}
\end{prop}
\end{Red}

Our final result is an explicit construction of a sequence of arithmetic algorithms,
describing a simple numerical method for the computation of eigenvalues of the Dirichlet Laplacian on a large class of bounded domains.
Recall that $Q(\partial \cO)$ is defined by (\ref{eq:min-comp-diam}) and $\#_c$ is the number of connected components.

\begin{Red}
\begin{theorem}\label{thm:spec-exist}
 Let
 \begin{equation}\label{eq:Omega1_def}
  \Omega_1 := \Bigg\{ \cO \in \Omega_0\; :\; \begin{aligned}
 	 &\cO = \intt(\overline{\cO}),\,\vol(\partial \cO)=0,\, Q(\partial \cO) > 0 \textnormal{ and }
 	 \\
 	 &\#_c \intt(\cO^c) = \#_c (\cO^c) < \infty 
 \end{aligned}
 \Bigg\}.
 \end{equation} 
\begin{enumerate}
\item[(i)]	
 There exists a sequence of arithmetic algorithms $\Gamma_n:\Omega_1 \to \cl(\C)$  such that 
 \begin{equation*}
  \dAW{\Gamma_n(\cO) }{\sigma(\cO)} \to 0 \quad \text{as} \quad n \to \infty \qquad \text{for all} \qquad  \cO \in \Omega_1,
\end{equation*}
that is, 
\begin{equation*}
  (\Omega_1, \Lambda_0, \mathcal{M},\Xi_\sigma) \in\Delta_2^A.
\end{equation*}
\item[(ii)]
The above result is sharp in the sense that explicit error control is neither possible from below, nor from above, i.e.
\begin{equation*}
  (\Omega_1, \Lambda_0, \mathcal{M},\Xi_\sigma) \notin\Sigma_1^G\cup\Pi_1^G.
\end{equation*}
\end{enumerate}
\end{theorem}
\begin{proof}[Proof of (ii)]
	We give the proof of part (ii) here. The proof of (i) requires more effort and will be done in the subsequent sections.

	\noindent
 \textit{Step 1.}
	We prove by contradiction that $(\Omega_1, \Lambda_0, \mathcal{M},\Xi_\sigma) \notin\Sigma_1^G$. To this end, assume that there exists a general tower $(\Gamma_n)_{n\in\N}$ of height 1 and a sequence of sets $X_n$ as in \eqref{eq:Sigma_k_def}. Then in particular $\Gamma_n(\cO)\subset X_n(\cO)$ and $d_{\text{AW}}\left(X_{n}(\cO),\sigma(\cO)\right)\leq 2^{-n}$ for all $n\in\N$ and all $\cO\in\Omega_1$. Let $\cO_0:=(0,\pi)^2$ so that the lowest Dirichlet eigenvalue of $\cO_0$ is $\lambda_1(\cO_0)=2$. 
By the convergence of $\Gamma_n$ in the Attouch-Wets metric, we can choose $n_0$ such that 
	\begin{align}\label{eq:BcapGamma}
		B_{\f12}(2)\cap\Gamma_{n_0}(\cO_0)\neq\emptyset.
	\end{align}
	The finite set $\Lambda_{\Gamma_{n_0}}(\cO_0)$ can be written as $\{\chi_{\bullet}(x_1),\dots,\chi_{\bullet}(x_{N})\}$ for finitely many points $x_1,\dots,x_{N}\in\R^2$. Next, define a new domain as follows. Denote $\{y_1,\dots,y_M\}:=\{x_1,\dots,x_{N}\}\cap\cO_0$ and let $l$ be any finite path of linear line segments that connects all $y_i$ (since $\cO_0$ is convex we necessarily have $l\subset\cO_0$). Choose $\eps>0$ small enough to ensure that $\dil_\eps(l)\subset\cO_0$. Finally, define $\cO_\eps := \dil_\eps(l)$. By construction we have $\cO_\eps\in\Omega_1$ and $\Gamma_{n_0}(\cO_\eps) = \Gamma_{n_0}(\cO_0)$ for all $\eps>0$. However, as $\eps\to 0$ we have $\lambda_1(\cO_\eps)\to+\infty$ (this can be seen from Theorem \ref{thm:poin}, for instance). Therefore we can choose $\eps$ small enough such that
	\begin{align}\label{eq:BcapSigma}
		B_{1}(2)\cap\sigma(\cO_\eps) = \emptyset.
	\end{align}
	By \eqref{eq:BcapGamma} any $X_{n_0}(\cO_\eps)$ satisfying $\Gamma_{n_0}(\cO_\eps)\subset X_{n_0}(\cO_\eps)$ necessarily contains a point in $B_{1/2}(2)$. However, \eqref{eq:BcapSigma} implies that the condition $d_{\text{AW}}\left(X_{n_0}(\cO_\eps),\sigma(\cO_\eps)\right)\leq 2^{-n_0}$ cannot be satisfied, which contradicts our assumption.

        \noindent
 \textit{Step 2.} We prove by contradiction that $(\Omega_1, \Lambda_0, \mathcal{M},\Xi_\sigma) \notin\Pi_1^G$. To this end, assume that there exists a general tower $(\Gamma_n)_{n\in\N}$ of height 1 and a sequence of sets $X_n$ as in \eqref{eq:Pi_k_def}. Then in particular $\sigma(\cO)\subset X_n(\cO)$ and $d_{\text{AW}}\left(X_{n}(\cO),\Gamma_n(\cO)\right)\leq 2^{-n}$ for all $n\in\N$ and all $\cO\in\Omega_1$. 
	Let $\cO_0:=(0,\pi)^2$ so that the lowest Dirichlet eigenvalue of $\cO_0$ is $\lambda_1(\cO_0)=2$. Then by assumption we have $d_{\text{AW}}\left(X_2(\cO_0),\sigma(\cO_0)\right)\leq \f14$ and therefore $\Gamma_2(\cO_0)\cap B_1(0) \subset X_2(\cO_0)\cap B_1(0)=\emptyset$. 
	Indeed, if there existed a point $p\in X_2(\cO_0)\cap B_1(0)$, then by \eqref{eq:att-wet} one would have
	\begin{align}\label{eq:dAW_calculation}
	\begin{split}
		d_{\text{AW}}\left(X_2(\cO_0),\sigma(\cO_0)\right) &= \sum_{k=1}^\infty 2^{-k}\min\bigg\{ 1\,,\,\sup_{|y|<k}\left| \dist(y,X_2(\cO_0)) - \dist(y,\sigma(\cO_0)) \right| \bigg\}
		\\
		&\geq \sum_{k=1}^\infty 2^{-k}\min\big\{ 1\,,\,\left| \dist(p,X_2(\cO_0)) - \dist(p,\sigma(\cO_0)) \right| \big\}
		\\
		&= \sum_{k=1}^\infty 2^{-k}\min\big\{ 1\,,\,\dist(p,\sigma(\cO_0)) \big\}
		\\
		&= \sum_{k=1}^\infty 2^{-k}\min\big\{ 1\,,\,|p-2| \big\}
		\\
		&= 1.
	\end{split}
	\end{align}
	Using convergence of  $\Gamma_n$, choose $n$ large enough such that $d_{\text{AW}}(\Gamma_n(\cO_0),\sigma(\cO_0))<\f14$ (without loss of generality, $n\geq 2$). Then necessarily $\Gamma_n(\cO_0)\cap B_1(0)=\emptyset$. 
	The finite set $\Lambda_{\Gamma_n}(\cO_0)$ can be written as $\{\chi_{\bullet}(x_1),\dots,\chi_{\bullet}(x_{N})\}$ for finitely many points $x_1,\dots,x_{N}\in\R^2$. Next, define a new domain $\cO_2$ as follows. 
	Let  $\cO_1:= (0,2\pi)^2 + p_R$, where $p_R$ is the point $(R,0)\in \R^2$ and  $R:=2\max\{|x_1|,\dots,|x_{N}|\}$. Because the set $\{x_1,\dots,x_{N}\}$ is finite, there exists a straight line segment $l$ connecting $\cO_0$ and $\cO_1$ and $\eps>0$ such that $\dil_\eps(l)\cap \{x_1,\dots,x_{N}\} = \emptyset$. Thus if we define $\cO_2 := \cO_0\cup\dil_\eps(l)\cup\cO_1$, then $\chi_{\cO_0}(x_i) = \chi_{\cO_2}(x_i)$ for all $i\in\{1,\dots,N\}$ and consequently $\Gamma_n(\cO_2)=\Gamma_n(\cO_0)$. But by domain monotonicity we have $\lambda_1(\cO_2)\leq\lambda_1(\cO_1)=\f12$. 
	Now, by definition of $\Pi_1^G$ there must exist $X_n(\cO_2)$ with 
	\begin{align}
		\sigma(\cO_2) &\subset X_{n}(\cO_2)
		\label{eq:sigma_subset_X}
		\\
		d_{\text{AW}}\left(X_{n}(\cO_2),\Gamma_n(\cO_2)\right) &\leq 2^{-n}.
		\label{eq:dAW(X,Gamma)}
	\end{align}
	Since $\Gamma_n(\cO_2)\cap B_1(0)=\Gamma_n(\cO_0)\cap B_1(0)=\emptyset$ and $\lambda_1(\cO_2)\in[0,\f12]$, equation \eqref{eq:sigma_subset_X} and a calculation similar to \eqref{eq:dAW_calculation} implies that $d_{\text{AW}}(X_{n}(\cO_2),\Gamma_n(\cO_2)) \geq \min\{1,\dist(\lambda_1(\cO_2), \Gamma_n(\cO_2))\} \geq \f12$, which contradicts \eqref{eq:dAW(X,Gamma)}.
%
\end{proof}
\end{Red}

\noindent
Note that any domain described in Example \ref{ex:jordan-curve} or \ref{ex:filled-julia} lies in $\Omega_1$.

The arithmetic algorithms in the above theorem are based on the following approximation for a Euclidean domain.
\begin{figure}
	\centering
	\includegraphics[width=0.8\textwidth]{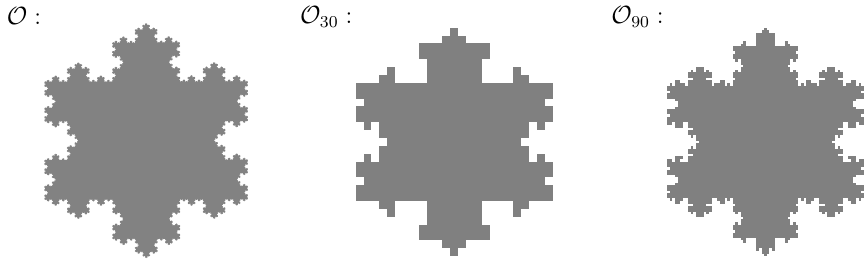}
	\caption{Sketch of a domain $\cO$ and its pixelated analogue $\cO_n$}
\end{figure}
\begin{de}\label{defn:pixel}
For any open set $\cO \subseteq \R^d$, \textit{pixelated domains} for $\cO$ are the open sets $\cO_n \subseteq \R^d,\,n \in \N$, defined by 
\begin{align*}
	\cO_n:=\intt \bigg(\bigcup_{j\in L_n} (j + [- \tfrac{1}{2n},\tfrac{1}{2n}]^d)\bigg),
\end{align*}
where
\begin{equation*}
	L_n:=\set*{ j\in \Z_n^d : j \in \cO } \quad \text{and}\quad  \Z_n^d := \br{n^{-1} \Z}^d.
\end{equation*}
\end{de}

The basic idea behind the construction of the algorithm is to combine pixelation approximations of the domain
with computable error bounds for the finite-element method. 
The computable error bounds for the finite element method that we employ are those of Liu and Oishi \cite{LiuOishi2013},
but similar bounds have also been obtained in \cite{cancesGuaranteedRobustPosteriori2018, carstensenGuaranteedLowerBounds2014}.
The algorithm of Theorem \ref{thm:spec-exist} can be summarised as:
\begin{enumerate}
\item[\textit{Step 1}] Approximate $\cO$ by a corresponding pixelated domain $\cO_n$. 
\item[\textit{Step 2}] Approximate the eigenvalues of $\cO_n$ to an error $1/n$ in the Attouch-Wets metric,
  using computable error bounds for the finite element method on a uniform triangulation of $\cO_n$
  and the Jacobi method combined a-posteriori error bounds for the associated matrix pencils.
\end{enumerate}
In Proposition \ref{prop:alg-mosco}, this algorithm is shown to converge on any bounded domains for which 
the pixelation approximations converge in the Mosco sense, that is, for any domain in 
 \begin{equation*}
    \Omega_M := \set*{ \cO \subset \R^2: \cO \text{\rm{ open, bounded and }}\cO_n \Mto \cO \text{\rm{  where }}\cO_n\text{\rm{  pixelated domains for }}\cO }.
  \end{equation*}
In Proposition \ref{prop:pixel}, we show that for the pixelated domains $\cO_n$ for a bounded, open set $\cO \subset \R^d$ satisfying  $\cO = \intt(\overline{\cO})$ and $\vol(\partial \cO) = 0$, we have
  \begin{equation*}
    \d_H(\cO_n,\cO) + \d_H(\partial \cO_n,\partial \cO) \to 0 \quad \text{as} \quad n \to \infty.
  \end{equation*}
Theorem \ref{thm:spec-exist} therefore follows by an application of our general Mosco convergence result Theorem \ref{th:mosco}.   

\subsubsection*{Discussion of SCI results}

Of course, for a specific given domain $\cO \in \Omega_1$ the algorithm of Theorem \ref{thm:spec-exist}  may not necessarily be the most efficient possible.
For instance, the Koch snowflake domain would be better approximated by the standard ``pre-fractal" domains (as in
\cite{specific_Levitin1995,specific_Bangerini2006,prefractal_Lancia2012,lapidusSnowflakeHarmonicsComputer1996,gabbardDiscretizationKochSnowflake2020} for instance) rather than the pixelated domains.
This is because, in this particular case, the pixelated domains have sharper re-entrant corners
hence the finite element method on each approximate domain converges slower.
Numerical methods for the Laplacian have also been devised  based on the smoothness of the boundary \cite{pos_Tiih1997} or on
affine self-similarity properties \cite{achdou}.
The novelty of our approach lies in the fact that we have constructed a \textit{single} algorithm that is capable of computing
\textit{any} domain in $\Omega_1$.

We mention that robust numerical methods have been constructed for specific non-regular domains (cf. \cite[Section 5.1]{LiuOishi2013} for instance).
Further, numerical methods for the Laplacian on more exotic spaces such the Sierpi\'nski triangle
and Julia sets have been studied in \cite{other_Strichartz,berryOuterApproximationSpectrum2009a,flockLaplaciansFamilyQuadratic2012}.

Naturally, any theoretical description of a real-world computational problem is an idealisation to some extent.
We identify two ways in which this is so in our work. 
Firstly, the arithmetic algorithms we consider may perform real-arithmetic computations to infinite precision,
as opposed to floating point operations.
We expect that the construction of the algorithm in Theorem \ref{thm:spec-exist} can be adapted to the floating point case.
Indeed, the results of Oishi \cite{Oishi2001}, which we use for the matrix eigenvalue computation component of our construction,
are formulated in terms of rigorous floating point arithmetic computations.

Secondly, the arithmetic algorithms we study have access to the characteristic function of any domain in the primary set.
The question of computability for characteristic functions of rough Euclidean sets is a subtle one which is sensitive
to the particular model of computation that one considers \cite{zhongRecursivelyEnumerableSubsets1998}.
In \cite{rettingerComputationalComplexityJulia2002}, Rettinger and Weihrauch  have proven that a certain class of Julia sets are computable by a Turing machine.
In fact, these Julia sets are exactly the boundaries of the domains considered in Example \ref{ex:filled-julia}, which belong to $\Omega_1$.
In \cite{Hertling2005}, Hertling has shown that that the Mandelbrot set is also computable
(in an appropriate sense), assuming that the MLC conjecture holds true.

\section{An explicit Poincar\'e-type inequality}\label{sec:poin}

This section is devoted to the proof of the Poincar\'e-type inequality Theorem \ref{thm:poin} on the collar neighbourhood $\col{r}\cO$ of a domain $\cO \subset \R^2$.
Our approach is inspired by the simple proof of the Poincar\'e inequality in the textbook of Adams and the Fournier \cite[Theorem 6.30]{adamsSobolevSpaces2003}.
The method consists in expressing the value of a function in $H^1_0(\cO)$ at a given point $x\in \col{r} \cO$ as an integral over a path from $x$ to the boundary $\partial \cO$.
We shall explicitly construct these paths.
This must be done in a way such that the bundle of paths corresponding to the different points in $\col{r} \cO$
do not ``concentrate'' too much at any given point on the boundary.
This is made possible by the assumption $Q(\del\cO)>0$ (cf. (\ref{eq:min-comp-diam})). In fact, this assumption is necessary, as the following example shows. 
\begin{example}
  Let $1>\eps>0$ and consider the domain $\cO := B_1(0)\setminus B_\eps(0)\subset\R^2$ (hence $Q(\del\cO) = 2\eps$). In polar coordinates, define the function $f_\eps(r) = \f{\log(\eps) - \log(r)}{\log(\eps)}$. An explicit calculation shows 
	\begin{align*}
		\|f_\eps\|_{L^2(\cO)}^2 
		&= 2 \pi \br*{ \frac{1}{2} + \frac{1}{2 \log(\epsilon)} + \frac{1 - \epsilon^2}{4 \log^2(\epsilon)}}		
		\\
		\|\nabla f_\eps\|_{L^2(\cO)}^2 &= -\f{2\pi}{\log(\eps)}
	\end{align*}
	And thus
	\begin{align*}
		\f{\|f_\eps\|_{L^2(\cO)}^2}{\|\nabla f_\eps\|_{L^2(\cO)}^2} \geq C|\log(\eps)|
	\end{align*}
	 as $\eps\to 0$. Since $f_\eps(x) \to 1$ as $x\to \del B_1(0)$, $f$ can be extended to a $H^1_0$ function on any domain that contains $\overline{B_1(0)}\setminus B_\eps(0)$.
	 This example shows that no uniform Poincar\'e inequality can hold on domains with arbitrarily small holes. Similar statements can be proved in higher dimensions (cf. \cite[Lemma 4.5]{rauchPotentialScatteringTheory1975}).
\end{example}

Throughout the section, let $\cO \subset \R^2$ be an arbitrary open set and fix the value $r > 0$,
corresponding to the size of the collar neighbourhood $\col{r} \cO$.

\subsection{Some geometric notions}\label{subsec:some-geom}

The construction of the bundle of paths shall be assisted
by the introduction of a grid of boxes covering $\R^2$.
We choose the boxes to have edge of length $r > 0$ - exactly the size of the collar neighbourhood $\col{r} \cO$.
We shall introduce, for the purpose of the proof, various notions such as \textit{cell-paths}, \textit{g-cells} and \textit{lg-cells}. 
Cell-paths can be thought of as a higher level structure within which the bundles of paths shall be constructed.
Then, g-cells and lg-cells (good cells and long good cells) are cells which $\partial \cO$ intersects in a way
such that a bundle of paths can be terminated at that cell.  

\begin{de}
\begin{enumerate}[label=\rm{(\alph*)},wide, labelindent=0pt]
 \item A \textit{cell} is a closed box $j + [0,r]^2$ for some $j \in (r \Z)^2$.
 \item An \textit{edge} of a cell is one of the 4 connected, closed straight line segments whose union comprises the boundary of the cell. 
\end{enumerate}
\end{de}

\begin{de}\label{de:g-cell}
 A \textit{g-cell} is a cell $c_0$ such that \red{there exist} two distinct, parallel edges $e_1$ and $e_2$ of $c_0$, which are connected by a path-connected segment of $\partial \cO$ in $c_0$, that is, 
 \begin{equation}\label{eq:g-cell-cond}
  \exists \Gamma \subseteq \partial \cO \cap c_0: \Gamma \text{ path-connected}, \Gamma \cap e_1 \neq \emptyset\text{ and }\Gamma \cap e_2 \neq \emptyset.
 \end{equation} 
 \red{Given any two distinct, parallel edges $e_1$ and $e_2$ of $c_0$ satisfying (\ref{eq:g-cell-cond}),
   the remaining two edges are referred to as \textit{normal edges}.}
\end{de}

\begin{remark}
  \red{Note that it is possible that a given cell $c_0$ may be made into a g-cell in more than one way, in the sense that both pairs of distinct,
    parallel edges  satisfy (\ref{eq:g-cell-cond})
    (e.g. if $c_0 \cap \Gamma$ is the union of a vertical and horizontal line).
    We shall always think of a g-cell as having a fixed pair of edges $e_1$ and $e_2$ satisfying (\ref{eq:g-cell-cond}),
    as well as fixed pair of normal edges. }
  
\end{remark}

\begin{de}
\begin{enumerate}[label=\rm{(\alph*)},wide, labelindent=0pt]
 \item A \textit{long-cell} is a set of two cells $\set{c_1,c_2}$ such that $c_1$ and $c_2$ share a common edge. 
 \item  An \textit{edge} of a long-cell $\set{c_1,c_2}$ is one of the 4 connected, closed straight line segments whose union comprises the boundary of the set $c_1 \cup c_2$. 
 \item A \textit{short-edge} of the long-cell is an edge of the long cell which is also an edge of a cell.
 \item A \textit{long-edge} of a long-cell is an edge of the long-cell which is not a short-edge.
\end{enumerate}
\end{de}

\begin{de}\label{de:lg-cell}
 An \textit{lg-cell} is a long cell $\set{c_1,c_2}$ for which there exist distinct long-edges $e_1$ and $e_2$  connected by a path-connected segment of $\partial \cO$ in $c_0$, that is, 
 \begin{equation*}
  \exists \Gamma \subseteq \partial \cO \cap \br*{c_1 \cup c_2} : \Gamma \text{ path-connected}, \Gamma \cap e_1 \neq \emptyset\text{ and }\Gamma \cap e_2 \neq \emptyset.
 \end{equation*}
 The \textit{normal edges} of an lg-cell refers to its short edges. We shall often say that an lg-cell $\set{c_1,c_2}$ is contained in a set $A$ to mean that $c_1 \cup c_2 \subseteq A $.
\end{de}

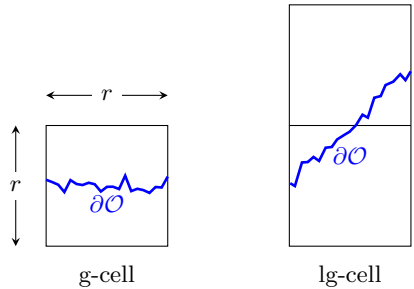
\begin{figure}
%
%

\begin{tikzpicture}[>=stealth, scale=0.8]
	\draw (0,0) rectangle (2,2);
	
	\draw[<->] (0,2.5) -- (2,2.5);
	\draw (1,2.5) node[fill=white]{\small$r$};
	
	\draw[<->] (-0.5,0) -- (-0.5,2);
	\draw (-0.5,1) node[fill=white]{\small$r$};
	
	\draw[blue, line width=1] plot file {figures/bdry.table};
	\draw (1,0.7) node{\small{\color{blue}$\partial\mathcal O$}};
	
	\draw (1,-0.5) node{\small g-cell};

	\draw (4,0) rectangle (6,4);
	\draw (4,2) -- (6,2);
	
	\draw[blue, line width=1] plot file {figures/bdry2.table};
	\draw (5,1.5) node{\small{\color{blue}$\partial\mathcal O$}};
	
	\draw (5,-0.5) node{\small lg-cell};
\end{tikzpicture}

	\caption{Illustration for Definitions \ref{de:g-cell} and \ref{de:lg-cell}.}  
\end{figure}

\begin{de}\label{de:cell-path}
 
 A \textit{cell path} from a cell $c_0$ to a g-cell $c_n$ (or to an lg-cell $\set{c_n,c_{n+1}}$) is a sequence of cells $(c_1,...,c_{n-1})$ such that 
\begin{enumerate}
 \item if $n \geq 2$, then $c_j$ shares a common edge with $c_{j-1}$ for each $j \in \set{1,...,n-1}$,
 \item if $n \geq 1$, then there exists an edge of $c_{n-1}$ which is also a normal edge of the g-cell $c_n$ (or of the lg-cell $\set{c_n,c_{n+1}}$ resp.) and
 \item  $(c_0,...,c_n)$ (or $(c_0,...,c_{n+1})$ resp.) consists of distinct elements.
\end{enumerate}
Here, we allow the possibility that $n=1$,
corresponding to the case that there exists an edge of $c_0$ which is also a normal edge of the g-cell $c_n$ (or of the lg-cell $\set{c_n, c_{n+1}}$ resp.)
and we allow the possibility that $n=0$, corresponding to the case that $c_0$ is itself a g-cell (or in an lg-cell resp.).
In both of these cases, the cell-path is empty. 
\end{de}

\begin{de}
 \begin{enumerate}[label=\rm{(\alph*)},wide, labelindent=0pt]
  \item The \textit{1-cell neighbourhood} $D_1[c_0]$ of a cell $c_0$ is the union of all cells sharing an edge or a corner with $c_0$, that is, 
  \begin{equation*}
    D_1[c_0] = \bigcup \set{c: c \text{ is a cell and } c \cap c_0 \neq \emptyset}.
  \end{equation*}
  \item The \textit{2-cell neighbourhood} $D_2[c_0]$ of a cell $c_0$ is the union of all cells sharing an edge or a corner with $D_1[c_0]$, that is, 
  \begin{equation*}
   D_2[c_0] = \bigcup \set{c: c \text{ is a cell and } c \cap D_1[c_0] \neq \emptyset}. 
 \end{equation*}
 \end{enumerate}
\end{de}

\begin{de}
  \begin{enumerate}[label=\rm{(\alph*)},wide, labelindent=0pt]
  \item A \textit{filled cell} is a cell $c$ such that  $c \cap \partial \cO \neq \emptyset$.
  \item A \textit{covering cell} is a cell $c$ which shares an edge or a corner with a filled cell $c_f$, i.e. $c \cap c_f \neq \emptyset$.
   \end{enumerate}
\end{de}

\subsection{Poincar\'e-type inequality for cell-paths}\label{subsec:poinc-type-ineq}

Given a cell $c_0$ and a cell-path from $c_0$ to either a g-cell or an lg-cell, one may express the value of a function $u \in C_0^\infty(c_0)$
\red{at any point} in $c_0$ as a line integral over a path within the cell path from that point to the boundary (cf. equation (\ref{eq:L2u-line})).
With this representation for $u$, one may proceed in a way similar to the proof of \cite[Theorem 6.30]{adamsSobolevSpaces2003} to obtain a Poincar\'e-type inequality
for $c_0$.

\begin{lemma}\label{lem:tetriminos}
 Let $c_0$ be a cell and let $(c_1,...,c_{n-1})$ be a cell path from $c_0$ to a g-cell $c_n$ or an lg-cell $\set{c_n,c_{n+1}}$. Then, for any $u \in H^1_0(\cO)$,
 \begin{equation}\label{eq:tetrimino-poin}
  \norm{u}^2_{L^2(c_0)} \leq 2 (n + 1) r^2 \sum_{j=0}^{n+1} \norm{\nabla u}^2_{L^2(c_j)}.
 \end{equation}
 In \eqref{eq:tetrimino-poin}, $c_{n+1}$ is considered to be the empty set in the case of a cell path to a g-cell.
\end{lemma}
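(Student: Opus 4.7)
Following the Adams--Fournier strategy, for each $x \in c_0$ I shall construct an explicit axis-parallel path $\gamma_x$, contained in $\bigcup_{j=0}^{n+1} c_j$, that leads from $x$ to a point of $\partial\cO$ lying inside the terminal (l)g-cell, write
\begin{equation*}
u(x) = -\int_{\gamma_x} \nabla u \cdot d\sigma
\end{equation*}
using that $u \in H^1_0(\cO)$ vanishes on $\partial\cO$, and then combine Cauchy--Schwarz with a cell-wise Fubini-type change of variables. Inside each intermediate cell $c_j$ the path $\gamma_x$ will be either a single axis-parallel segment or an L-shaped pair of such segments joining the entering edge $c_{j-1} \cap c_j$ to the exiting edge $c_j \cap c_{j+1}$. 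Inside $c_0$ it starts at $x$ and runs to the shared edge with $c_1$. Inside the terminal g-cell $c_n$ it enters through the shared normal edge with $c_{n-1}$ and travels perpendicular to the normal edges until it meets $\Gamma$; that such a segment actually reaches $\Gamma$ is an elementary planar separation argument, using that the connected set $\Gamma \subseteq c_n$ meets both parallel edges $e_1,e_2$ and therefore separates the two normal edges of $c_n$ inside the closed square $c_n$. The lg-cell case is handled analogously, with $c_n \cup c_{n+1}$ in place of $c_n$ and with $\Gamma$ connecting two long-edges.

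The estimate then follows from Cauchy--Schwarz applied to the line-integral representation,
\begin{equation*}
|u(x)|^2 \;\leq\; |\gamma_x|\int_{\gamma_x} |\nabla u|^2\,d\sigma,
\end{equation*}
combined with a cell-wise Fubini inequality
\begin{equation*}
\int_{c_0} \int_{\gamma_x \cap c_j} g\,d\sigma\,dx \;\leq\; C\, r \int_{c_j} g\,dy \qquad (g \geq 0),
\end{equation*}
established for each $j \in \{0,\ldots,n+1\}$. The latter follows by inspecting the two-parameter family $\{\gamma_x \cap c_j\}_{x \in c_0}$: under the above prescription, one of the two coordinates of $x$ parametrises the common axis-parallel segment in $c_j$ while the other merely shifts its starting point, acting as a harmless dummy that produces the factor $r$. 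Summing over $j$ together with the cell-wise length bound $|\gamma_x \cap c_j| \leq 2r$ yields the inequality \eqref{eq:tetrimino-poin}.

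The main technical obstacle is maintaining the Fubini inequality uniformly across all combinatorial configurations of the cell-path, and in particular at the ``turns'' where the cell-path changes propagation direction and the intermediate segment of $\gamma_x$ must bend into an L-shape. A consistent convention for the two legs of each L, together with condition (3) of Definition \ref{de:cell-path} guaranteeing that $c_0,\ldots,c_{n+1}$ are all distinct, keeps the multiplicity of the bundle $\{\gamma_x\}_{x \in c_0}$ bounded cell by cell and the Fubini factor of order $r$. The topological step ensuring that the final segment meets $\Gamma$ is elementary once phrased as a separation statement in the closed square $c_n$.
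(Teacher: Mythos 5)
Your plan is, at the level of strategy, essentially what the paper does: for each $x\in c_0$ construct an explicit axis-parallel path through the cells of the cell path to a point of $\partial\cO$ inside the terminal (l)g-cell, write $u(x)$ as the line integral of $\nabla u$ along it, then combine Cauchy--Schwarz with a cell-wise Fubini change of variables. The paper realises the path via a family of isometries $\iota_j:[0,r]^2\to c_j$ and telescopes $u(x,y)=I_g(x)+\sum_{j=1}^{n-1}I_j(x)+\int_0^y\partial_t u(x,t)\,\d t$ as in \eqref{eq:u-as-int}, bounding each summand by $r\norm{\nabla u}^2_{L^2(c_j)}$. Your ``separation argument'' for where the last leg of $\gamma_x$ meets $\Gamma$ is precisely the paper's introduction of the function $w$ with $u\circ\iota_n(s,w(s))=0$, and your ``dummy coordinate'' observation is what underlies the paper's Fubini computations in \eqref{eqpr:tetrimino-1}--\eqref{eqpr:tetrimino-5}.

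The one point to watch is quantitative, and it is where your plan as written does not deliver exactly \eqref{eq:tetrimino-poin}. Applying Cauchy--Schwarz globally, $|u(x)|^2\leq|\gamma_x|\int_{\gamma_x}|\nabla u|^2\,\d\sigma$, introduces the \emph{total} path length $|\gamma_x|$, which with your bound $|\gamma_x\cap c_j|\leq 2r$ is of order $(n+2)r$; combined with the cell-wise Fubini factor of order $r$, this produces a constant of order $(n+2)r^2$, not $r^2$. The paper instead applies Cauchy--Schwarz on each segment separately (each of length $\leq r$), but notice that its own passage from the square of the telescoping sum to the sum of squares in \eqref{eq:L2u-line} would again cost a factor of order $n+1$ if justified by the finite-sum Cauchy--Schwarz inequality; so the stated constant $r^2$ is a delicate point on either route, and what one honestly obtains is $C(n+2)r^2$. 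Since in the only downstream use of this lemma the cell paths lie inside $D_2[c_0]$, $n$ is bounded by a small absolute constant and the main theorem is unaffected; but you should either track and state the $n$-dependent constant explicitly, or explain why you believe the $n$-independent bound holds.
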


\begin{proof}

Assume without loss of generality that $c_0 = [0,r]^2$. 
In the case of a cell path to an lg-cell, assume without loss of generality that $c_n$ shares an edge with $c_{n-1}$.
We first deal with the case that $n \geq 1$, so that  $c_0 \neq c_n$ (or $c_0 \notin \set{c_n,c_{n+1}}$ in the case of a cell-path to an lg-cell).
The easier case $n = 0$ will be treated separately.

For each $j \in \set{0,...,n-1}$, let $e_j$ denote the unique edge shared by $c_j$ and $c_{j+1}$ (note that $c_j \neq c_{j+1}$ by the definition of a cell path).
Assume without loss of generality that $e_0 = [0,r] \times \set{0}$.
Let us parameterise each of the edges $e_j$ by $(e_j(s))_{s \in [0,r]}$ such that the path $s \mapsto e_j(s)$ has unit speed.
It suffices to specify the point $e_j(0)$ or the point $e_j(r)$ in order to define the entire parameterisation $(e_j(s))_{s \in [0,r]}$.
\begin{enumerate}
\item Define $(e_0(s))_{s \in [0,r]}$ by $e_0(0) = (0,0)$, so that $e_0(s) = (s,0)$.
\end{enumerate}
If $n = 1$, then we are done. If $n \geq 2$, then the parameterisations are defined recursively as follows.
Note that for each $j \in \set{1,...,n-1}$, we have $c_{j-1} \neq c_{j+1}$ by the definition of a cell-path and so $e_j \neq e_{j-1}$.
\begin{enumerate}
 \item[(2)] For $j \in \set{1,...,n-1}$, if $e_{j-1}$ is parallel to $e_j$, then we call $c_j$ a \textit{straight tile}. In this case, define $(e_j(s))_{s \in [0,r]}$ by the condition
   that $e_j(0)$ is connected by an edge of $c_j$ to $e_{j-1}(0)$, so that $e_j(r)$ is connected by an edge of $c_j$ to $e_{j-1}(r)$.
 \item[(3)] For $j \in \set{1,...,n-1}$, if $e_{j-1}$ is perpendicular to $e_j$, then we call $c_j$ an \textit{corner tile}.
 If $e_j$ and $e_{j-1}$ share the point $e_{j-1}(0)$, then $c_j$ is said to be \textit{positively oriented}. 
In this case, define $(e_j(s))_{s \in [0,r]}$  by the condition that $e_j(0) = e_{j-1}(0)$.
On the other hand, if $e_j$ and $e_{j-1}$ share the point $e_{j-1}(r)$, then $c_j$ is said to be \textit{negatively oriented}.
In this case, define $(e_j(s))_{s \in [0,r]}$ by the condition that $e_j(r) = e_{j-1}(r)$.
\end{enumerate}
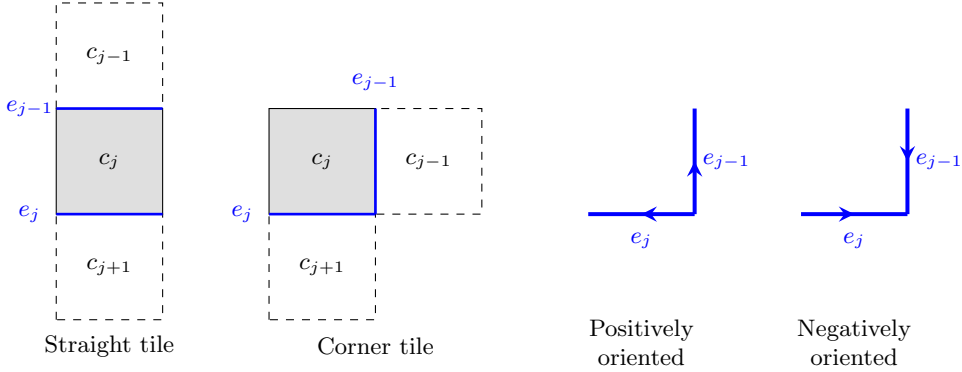
\begin{figure}[htbp]
  \centering
%
%
\tikzset{->-/.style={decoration={
  markings,
  mark=at position .5 with {\arrow{>}}},postaction={decorate}}}

\begin{tikzpicture}[>=stealth, scale=0.7]
	\filldraw[fill=gray!25!white] (0,2) rectangle (2,4);
	\draw[dashed] (0,0) rectangle (2,2);
	\draw[dashed] (0,4) rectangle (2,6);
	
	\draw[blue, line width=1] (0,2) -- (2,2);
	\draw[blue, line width=1] (0,4) -- (2,4);
	
	\draw (1,-0.5) node{\small Straight tile};
	\draw (-0.5,2) node{{\color{blue}\small $e_j$}};
	\draw (-0.5,4) node{{\color{blue}\small $e_{j-1}$}};
	\draw (1,3) node{\small $c_j$};
	\draw (1,5) node{\small $c_{j-1}$};
	\draw (1,1) node{\small $c_{j+1}$};
	
	\draw[dashed] (4,0) rectangle (6,2);
	\filldraw[fill=gray!25!white] (4,2) rectangle (6,4);
	\draw[dashed] (6,2) rectangle (8,4);
	
	\draw[blue, line width=1] (4,2) -- (6,2);
	\draw[blue, line width=1] (6,2) -- (6,4);
	
	\draw (6,-0.5) node{\small Corner tile};
	\draw (3.5,2) node{{\color{blue}\small $e_j$}};
	\draw (6,4.5) node{{\color{blue}\small $e_{j-1}$}};
	\draw (5,3) node{\small $c_j$};
	\draw (7,3) node{\small $c_{j-1}$};
	\draw (5,1) node{\small $c_{j+1}$};
	
	\draw[->-, blue, line width=1.5] (12,2) -- (10,2);
	\draw[->-, blue, line width=1.5] (12,2) -- (12,4);
	\draw (12.6,3) node{{\color{blue}\small $e_{j-1}$}};
	\draw (11,1.5) node{{\color{blue}\small $e_{j}$}};
	\draw (11,-0.2) node{\small Positively};
	\draw (11,-0.7) node{\small oriented};
	
	\draw[->-, blue, line width=1.5] (14,2) -- (16,2);
	\draw[->-, blue, line width=1.5] (16,4) -- (16,2);
	\draw (16.6,3) node{{\color{blue}\small $e_{j-1}$}};
	\draw (15,1.5) node{{\color{blue}\small $e_{j}$}};
	\draw (15,-0.2) node{\small Negatively};
	\draw (15,-0.7) node{\small oriented};
	
\end{tikzpicture}

   \caption{Sketch of the different types of tiles and orientation.}
\end{figure}
Next, we construct a family of isometries $(\iota_j:\R^2 \to \R^2)_{j \in \set{1,...,n}}$ each of which maps $[0,r]^2$ to the cell $c_j$. 
The purpose of this is to simplify the later construction of paths within each cell.
Recall that any composition of translations, rotations and reflections in the plane is an isometry.
This, along with the fact that $e_j \neq e_{j-1}$ for all $j \in \set{1,...,n-1}$, is what guarantees the existence of isometries satisfying the below conditions. 
\begin{enumerate}
 \item For $j \in \set{1,...,n-1}$, if $c_j$ is a straight tile, then choose $\iota_j$ such that $\iota_j(s,0) = e_j(s)$ and $\iota_j(s,r) = e_{j-1}(s) $.
 \item  For $j \in \set{1,...,n-1}$, if $c_j$ is a positively oriented corner tile, then choose $\iota_j$ so that $\iota_j(r - s,0) = e_j(s)$ and $\iota_j(r,s) = e_{j-1}(s)$.
 \item For $j \in \set{1,...,n-1}$, if $c_j$ is a negatively oriented corner tile, then choose $\iota_j$ so that $\iota_j(s,0) = e_j(s)$ and $\iota_j(r,r - s) = e_{j-1}(s)$.
 \item Choose $\iota_n$ so that $\iota_n(s,r) = e_{n-1}(s)$ and  $\iota_n([0,r]^2) = c_n$. In the case of a cell-path to an lg-cell, this implies that $\iota_n([0,r]\times[-r,0]) = c_{n+1}$.
\end{enumerate}

By the density of $C_c^\infty(\cO)$ in $H^1_0(\cO)$, it suffices to show that \eqref{eq:tetrimino-poin} holds for all $u \in C_c^\infty(\cO)$. Hence, let $u \in C^\infty_c(\cO)$.

By Definitions \ref{de:g-cell} and \ref{de:lg-cell} for a g-cell and an lg-cell respectively, 
there exists a function $w:[0,r] \to [-r,r]$ such that $u \circ \iota_n(s,w(s)) = 0$ for all $s \in [0,r]$.
Note that in the case of a cell-path to a g-cell, $w$ only takes values in $[0,r]$.

Firstly, for any $s \in [0,r]$,
\begin{equation*}
 u(e_{n-1}(s)) = u \circ \iota_n(s,r) = \int_{w(s)}^r \frac{\partial}{\partial t} u \circ \iota_n (s,t) \d t =: I_g(s).
\end{equation*}
Let $j \in \set{1,...,n-1}$.
If $c_j$ is a straight tile, then for any $s \in [0,r]$,
\begin{equation*}
 u(e_{j-1}(s)) - u(e_j(s)) = u \circ \iota_j(s,r) - u \circ \iota_j(s,0) = \int_0^r \frac{\partial}{\partial t}u \circ \iota_j(s,t) \d t =: I_j(s).
\end{equation*}
If $c_j$, is a corner tile, then let
\begin{equation*}
 \tilde{I}_j(s) := \int_0^s \frac{\partial}{ \partial t} u \circ \iota_j (r-s,t) \d t + \int_{r-s}^r \frac{\partial}{ \partial t} u \circ \iota_j (t,s) \d t. 
\end{equation*}
If $c_j$ is a positively oriented corner tile, then for any $s \in [0,r]$,
\begin{equation*}
 u(e_{j-1}(s)) - u(e_j(s)) = u \circ \iota_j(r,s) - u \circ \iota_j(r-s,0) = \tilde{I}_j(s) =: I_j(s).
\end{equation*}
If $c_j$ is a negatively oriented corner tile, then for any $s \in [0,r]$,
\begin{equation*}
 u(e_{j-1}(s)) - u(e_j(s)) = u \circ \iota_j(r,r - s) - u \circ \iota_j(s,0) = \tilde{I}_j(r- s) =: I_j(s).
\end{equation*}

We can now express the value of $u$ at any point in $c_0 = [0,r]^2$ as sum of line integrals. 
For any $x,y \in [0,r]$, 
\begin{equation}\label{eq:u-as-int}
 u(x,y) = u(e_0(x)) + \int_0^y \frac{\partial}{\partial t} u(x,t) \d t = I_g(x) + \sum_{j=1}^{n-1} I_j(x) + \int_0^y \frac{\partial}{\partial t} u(x,t) \d t
\end{equation}
hence 
\begin{align}\label{eq:L2u-line}
 \norm{u}^2_{L^2(c_0)} & = \int_0^r  \int_0^r  |u(x,y)|^2 \d x \d y  \nonumber\\
 & \leq (n+1)r \sbr*{ \int_0^r |I_g(x)|^2 \d x +  \sum_{j=1}^{n-1} \int_0^r |I_j(x)|^2 \d x + \int_0^r \br*{ \int_0^r \abs*{ \frac{\partial}{\partial t} u(x,t) } \d t }^2 \d x }.
\end{align}

Focusing on the final term in the square brackets of \eqref{eq:L2u-line} and applying Cauchy-Schwarz, 
\begin{equation}\label{eqpr:tetrimino-1}
 \int_0^r \br*{ \int_0^r \abs*{ \frac{\partial}{\partial t} u(x,t)} \d t }^2 \d x \leq r \int_0^r \int_0^r \abs*{ \frac{\partial}{\partial t} u(x,t)}^2 \d x \d t \leq r \norm{\nabla u}^2_{L^2(c_0)}.
\end{equation}
To estimate the remaining terms, we need to use the fact that
\begin{align*}
 \abs*{\frac{\partial}{\partial t} u \circ \iota_j (x,t)}  \leq \abs*{ \nabla u (\iota_j(x,t)) } \abs*{\frac{\partial \iota_j}{\partial t}(x,t)} \leq \abs*{\nabla u (\iota_j(x,t))},
\end{align*}
where the final inequality holds since $\iota_j$ is an isometry,  and similarly, 
\begin{align*}
 \abs*{\frac{\partial}{\partial t} u \circ \iota_j (t,y)} \leq \abs*{\nabla u (\iota_j(t,y))}.
\end{align*}
Focusing on the middle terms in the square brackets of \eqref{eq:L2u-line}, let $j \in \set{1,...,n-1}$.
If $c_j$ is a straight tile, then 
\begin{equation}\label{eqpr:tetrimino-2}
 \int_0^r |I_j(x)|^2 d x \leq \int_0^r \br*{ \int_0^r \abs*{\nabla u (\iota_j(x,t))} \d t }^2  \d x \leq r \int_0^r \int_0^r \abs*{\nabla u (\iota_j(x,t))}^2 \d x \d t = r \norm{\nabla u}^2_{L^2(c_j)}.
\end{equation}
If $c_j$ is an corner tile, then 
\begin{align*}
 \int_0^r \abs{\tilde{I}_j(x)}^2 \d x & \leq 2 \sbr*{\int_0^r \br*{ \int_0^x \abs*{ \nabla u (\iota_j (r-x,t))} \d t }^2 \d x + \int_0^r \br*{\int_{r-x}^r \abs*{ \nabla u ( \iota_j (t,x) ) } \d t}^2 \d x }\\
                   &    \leq 2 r \br*{\int_0^r  \int_0^x \abs*{ \nabla u (\iota_j (r-x,t))}^2 \d t  \d x + \int_0^r \int_{r-x}^r \abs*{ \nabla u ( \iota_j (t,x) ) }^2 \d t \d x  }             \\
                   & = 2 r \norm{\nabla u}^2_{L^2(c_j)}.
\end{align*}
Hence, if $c_j$ is a positively oriented corner tile, then 
\begin{equation}\label{eqpr:tetrimino-3}
 \int_0^r \abs{I_j(x)}^2 \d x = \int_0^r \abs{\tilde{I}_j(x)}^2 \d x \leq 2 r \norm{\nabla u}^2_{L^2(c_j)}
\end{equation}
and, similarly, if $c_j$ is a negatively oriented corner tile, then 
\begin{equation}\label{eqpr:tetrimino-4}
 \int_0^r \abs{I_j(x)}^2 \d x = \int_0^r \abs{\tilde{I}_j(r - x)}^2 \d x = \int_0^r \abs{\tilde{I}_j(x)}^2 \d x \leq 2 r \norm{\nabla u}^2_{L^2(c_j)}.
\end{equation}
Finally, letting $h = 0$ in the case of cell-path to a g-cell and $h = -r$ in the case of a cell-path to an lg-cell, we have
\begin{align}\label{eqpr:tetrimino-5}
  \int_0^r |I_g(x)|^2 \d x & \leq \int_0^r \br*{\int_{w(x)}^r \abs*{\nabla u(\iota_n(x,t))} \d t }^2 \d x \nonumber\\
  & \leq 2 \sbr*{\int_0^r \br*{\int_0^r \abs*{\nabla u(\iota_n(x,t))} \d t }^2 \d x + \int_0^r \br*{\int_h^0 \abs*{\nabla u(\iota_n(x,t))} \d t }^2 \d x }\nonumber \\
  & \leq 2 r \br*{ \norm{\nabla u}^2_{L^2(c_n)} + \norm{\nabla u}^2_{L^2(c_{n+1})}}.
\end{align}
where $c_{n+1}$ is considered to the empty set in the case of a cell-path to a g-cell.
The proof for the case $n \geq 1$ is completed by substituting estimates \eqref{eqpr:tetrimino-1}-\eqref{eqpr:tetrimino-5} into \eqref{eq:L2u-line}.

The case $n = 0$ is similar. Assume that $c_0 = [0,r]^2$ and in the lg-cell case, that $c_1 = [0,r] \times [-r,0]$. Then there exists a function $w:[0,r] \to [-r,r]$ such that
\begin{equation*}
  u(x,y) = \int_{w(x)}^y \frac{\partial }{\partial t} u(x,t) \d t \qquad ((x,y) \in c_0)
\end{equation*}
and the proof proceeds as before.
\end{proof}

\subsection{Construction of the cell-paths}\label{subsec:constr-cell-paths}
Next, we need to construct cell paths from any covering cell to a g-cell or an lg-cell.
The first step is to show that there is a g-cell or an lg-cell in the 1-cell neighbourhood of a filled cell,
provided the path-connected components of $\partial \cO$ all have large enough diameter.
We shall need the fact that
 \begin{equation}\label{eq:diam-defns}
  \diam(A) \leq 2 \inf_{x \in A } \sup_{y \in A} |x - y|.
 \end{equation}
for any bounded set $A \subset \R^d$.

\begin{figure}[h!]
	\centering
	\includegraphics[width=0.9\textwidth]{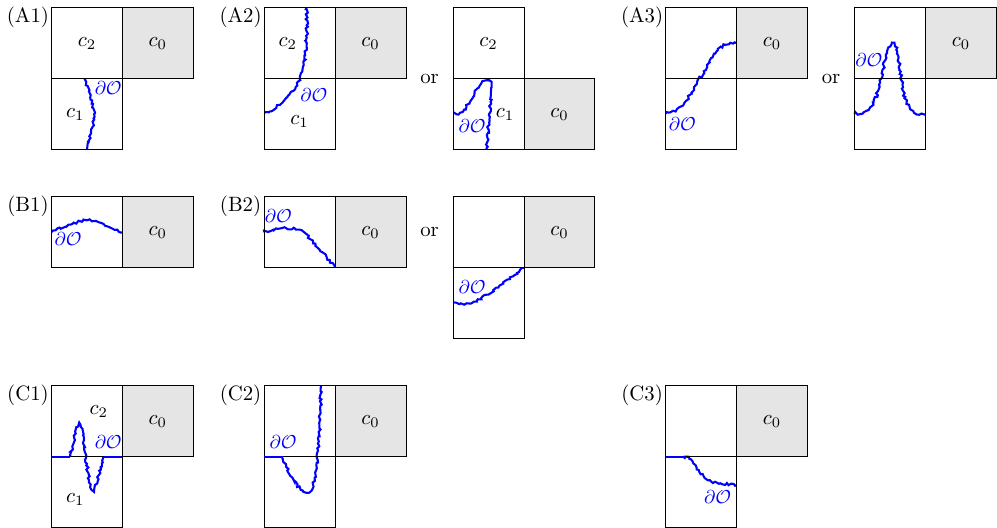}
	\caption{Example sketches for some of the cases (A), (B), (C) in the proof of Lemma \ref{lemma:exists_g-cell}}
\end{figure}

\begin{lemma}\label{lemma:exists_g-cell}
 If $Q(\partial \cO) > 4 \sqrt{2} r$, then for any filled cell $c_0$ there exists a g-cell or an lg-cell contained in $D_1[c_0]$.  
\end{lemma}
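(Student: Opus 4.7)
My plan is to use the inequality $\diam(D_1[c_0])=3\sqrt{2}\,r < 4\sqrt{2}\,r < Q(\partial\cO)$ to force any path-connected component of $\partial\cO$ meeting $c_0$ to exit $D_1[c_0]$, and then to extract a g-cell or an lg-cell by tracing how the exit arc traverses the ring of eight cells surrounding $c_0$.

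First I pick $p\in c_0\cap\partial\cO$ and let $\Gamma$ be its path-connected component in $\partial\cO$. Since $\diam(\Gamma)\ge Q(\partial\cO)>\diam(D_1[c_0])$, $\Gamma$ is not contained in $D_1[c_0]$, so by path-connectedness I may choose a continuous $\gamma\colon[0,1]\to\Gamma$ with $\gamma(0)=p$ and $\gamma(1)\notin D_1[c_0]$. Setting $\tau:=\inf\{t:\gamma(t)\notin D_1[c_0]\}$ and $\sigma:=\sup\{t\le\tau:\gamma(t)\in c_0\}$, the arc $\gamma|_{[\sigma,\tau]}$ lies in the ring of eight cells surrounding $c_0$, starts on $\partial c_0$, and ends at a point $\gamma(\tau)$ on an outer edge $e_{\mathrm{ext}}$ of some ring cell $c_e$.

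Next I examine each visit of the arc $\gamma|_{[\sigma,\tau]}$ to a ring cell: on every visit it enters through some edge (shared with $c_0$ or with a neighbouring ring cell) and exits through some edge. If any single visit has its entry edge parallel to its exit edge, then the arc restricted to that visit is a path-connected subset of $\partial\cO$ inside the corresponding cell joining two parallel edges, so that cell is a g-cell --- this is case (A). Otherwise every visit is a $90^\circ$ corner transit, and I focus on the final visit (to $c_e$) with entry $e_{\mathrm{ent}}$ shared with a ring neighbour $c'$ and exit $e_{\mathrm{ext}}$ on $\partial D_1[c_0]$: the long-cell $L=\{c',c_e\}\subset D_1[c_0]$ has long-edges parallel to $e_{\mathrm{ext}}$. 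Tracking where the arc enters $L$, if it enters through a long-edge of $L$ --- for instance through the portion of $\partial c_0$ lying on a long-edge of $L$, or through the opposite long-edge of $L$ from an interior ring neighbour --- then $L$ is an lg-cell (cases (B), (C)). If instead the arc enters $L$ through the short-edge of $L$ opposite $e_{\mathrm{ent}}$, I iterate by shifting attention to $c'$ and the cell preceding it along the arc; the iteration terminates because the arc is finite and originates on $\partial c_0$.

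The main obstacle is keeping the case analysis tractable when the arc winds through the ring with a sequence of corner transits: one has to enumerate the possible sequences and verify that at some stage the configuration produces either a g-cell or an lg-cell. The key geometric input is the narrowness of the ring --- only one cell wide --- which squeezes the arc between $\partial c_0$ and $\partial D_1[c_0]$ and severely restricts the possible sequences; in particular, within a few steps one always produces either a straight transit (giving a g-cell) or two aligned corner transits (giving an lg-cell). Finally, the degenerate situations in which $\gamma(\sigma)$ or $\gamma(\tau)$ coincides with a cell corner are handled either by a small reparametrisation of $\gamma$ or by a direct separate case for transits from $c_0$ into a corner cell of the ring.
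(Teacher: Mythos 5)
Your overall strategy matches the paper's: pick a point $p\in c_0\cap\partial\cO$, use the diameter bound to produce an arc of $\Gamma$ from $\partial c_0$ to $\partial D_1[c_0]$ through the ring of eight cells, and read off a g-cell or lg-cell from how that arc crosses cell edges. (Your shortcut $\diam(D_1[c_0])=3\sqrt2\,r<Q(\partial\cO)$ is fine and even slightly more elementary than the paper's use of \eqref{eq:diam-defns}.) However, the core of the argument --- the case analysis --- is where your proposal falls short of a proof.

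\textbf{The iteration step does not work as stated, and it is also unnecessary.} You split the final long-cell $L=\{c',c_e\}$ by whether the arc enters $L$ through a long-edge of $L$ (giving an lg-cell) or ``through the short-edge of $L$ opposite $e_{\mathrm{ent}}$,'' and you iterate in the latter case. But if the arc enters $c'$ through the short edge of $L$ (parallel to $e_{\mathrm{ent}}$) and exits through $e_{\mathrm{ent}}$, that is a \emph{straight} transit of $c'$, already covered by your case (A); under your ``only corner transits'' hypothesis this case is vacuous. Meanwhile, the genuinely problematic case is absent from your dichotomy: the arc could enter $c'$ through the part of the \emph{same} long-edge of $L$ that contains $e_{\mathrm{ext}}$, in which case the arc in $c'\cup c_e$ connects two pieces of one long-edge and $L$ need not be an lg-cell. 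This case can only be excluded by observing that the entire long-edge of $L$ containing $e_{\mathrm{ext}}$ lies on $\partial D_1[c_0]$, and insisting that the arc stay in $\operatorname{int}D_1[c_0]$ for $t\in(\sigma,\tau)$. Your definition $\tau=\inf\{t:\gamma(t)\notin D_1[c_0]\}$ does not enforce this, because $\gamma$ may touch $\partial D_1[c_0]$ from the inside before $\tau$. The paper's construction fixes exactly this by choosing $\gamma$ so that $\gamma(t)\in\operatorname{int}D_1[c_0]\setminus c_0$ for $t\in(0,1)$, and then its case (A3) exploits it directly: the zeroth edge $e_0$ sits on $\partial D_1[c_0]$, the second edge $e_2$ cannot (it meets either $\operatorname{int}D_1[c_0]$ or $\partial c_0$), so they lie on \emph{opposite} long-edges of $\{c_1,c_2\}$ and no iteration is ever needed.

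\textbf{The transit bookkeeping and the corner degeneracies are not rigorous.} The notion of a ``visit'' with a well-defined entry edge and exit edge is not something the arc automatically provides --- $\gamma$ can oscillate across an edge, run along an edge, or pass through a corner. The paper handles this with $t_1$, $t_2$ and $\tilde t_1$ and, crucially, with a separate subcase (C1) for the situation $\tilde t_1=t_2$ where the arc runs along an edge from $\partial D_1[c_0]$ all the way to $\partial c_0$ (this yields an lg-cell and is easy to miss). Your proposal to handle corner hits ``by a small reparametrisation'' cannot work: reparametrising does not move the image of $\gamma$, which is a fixed subset of $\partial\cO$. One either treats the corner case separately (as the paper does in (B2) and via the multiplicity of zeroth/first edges) or one must explain which sub-path is chosen to avoid the degeneracy.

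In short, your ``key geometric input'' --- the narrowness of the ring --- is the right intuition, but the claim that two consecutive corner transits always produce an lg-cell is false without the additional observation about $\partial D_1[c_0]$, and the termination of your iteration is never established (and, being vacuous, could not be). The paper avoids all of this by analysing only the first two edge crossings from the $\partial D_1[c_0]$ end.
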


\begin{proof}
 
  Let $c_0$ be a filled cell.   
 There  exists a path-connected component $\Gamma \subseteq \partial \cO$ such that $\Gamma \cap c_0 \neq \emptyset$. Let $x \in \Gamma \cap c_0$. 
 Using (\ref{eq:diam-defns}) and the hypothesis on $Q(\partial \cO)$,
 \begin{equation*}
 2\sqrt{2} r < \frac{1}{2} Q(\partial \cO) \leq \frac{1}{2} \diam (\Gamma) \leq \sup_{y \in \Gamma}|x - y|
 \end{equation*}
 so there exists $y \in \Gamma$ with $|x - y| > 2 \sqrt{2} r $.
 In particular, $y\in \Gamma$ lies outside of $D_1 [c_0]$. 
 Since $\Gamma$ is path-connected, there exists a continuous path in $\Gamma$ from $y$ to $x$.
 Restricting this path, we deduce there exist a continuous path $\gamma:[0,1] \to D_1[c_0]$ such that 
 \begin{equation*}
  \gamma(t) \in \begin{cases} \partial D_1[c_0] & \text{if }t = 0 \\ \intt  D_1[c_0] \bs c_0 & \text{if } t \in (0,1) \\ \partial c_0 & \text{if }t = 1 \end{cases}\quad \text{and} \quad \forall t \in [0,1]:\gamma(t) \in \Gamma .
 \end{equation*}

 Let us fix some notions that will allow us to prove the lemma. 
 Firstly, an edge $e$ is a \textit{zeroth edge} if $\gamma(0)\in e$ and $e \subset \partial D_1[c_0]$.
 Since we defined an edge to be closed, there may be up to two zeroth edges.
 
 Let 
 \begin{equation*}
  t_1 := \inf \set*{ t > 0: \exists \text{ edge }e\text{ such that }\gamma(t) \in e } \in [0,1].
 \end{equation*}
 A \textit{first edge} is defined as any edge $e$ such that $\gamma(t_1) \in e$ and $e$ is not a zeroth edge.
 If $t_1 \in (0,1)$, then the first edge is unique since $\gamma(t)$ can belong to at most one edge for $t \in (0,1)$.
 If $t_1 = 1$, then there may be up to four first edges (indeed, this is the case if $\gamma(1)$ lies in a corner of $c_0$).
 If $t_1 = 0$, then the first edge is again unique. This is because $\gamma(0)$ must lie in $\partial D_1[c_0]$
 and hence can only lie in at most one edge which isn't entirely contained in $\partial D_1[c_0]$
 (indeed, an edge containing $\gamma(0)$ which is contained in $\partial D_1[c_0]$ must be a zeroth edge).

If $t_1 < 1$, then there exists a unique first edge $e_1$ so we can make the following definitions.
Let
 \begin{equation*}
  t_2 := \inf \set*{ t > 0: \exists \text{ edge }e\text{ such that }\gamma(t) \in e\text{ and }e \neq e_1} \in (t_1,1].
\end{equation*}
Here, $t_2$ exists and satisfies $t_2 \leq 1$ since $\gamma(1)$ lies in at least one edge which is contained in $\partial c_0$ hence which is not the first edge $e_1$.
Also, $t_2$ satisfies $t_2 > t_1$ since the only edge that $\gamma(t)$ can intersect for $t \in (0, t_1]$ is the first edge $e_1$.
A \textit{second edge} is defined as any edge $e$ such that $\gamma(t_2) \in e$ and $e\neq e_1$.
Note that a second edge cannot be a zeroth edge since $t_1 > 0$.
Finally, let
\begin{equation*}
 \tilde{t}_1 := \sup \set*{ t \leq t_2 : \gamma(t) \in e_1}.
\end{equation*}
If $t_1=1$, then $t_2$, the second edges and $\tilde{t}_1$ are not defined.

 Let us now proceed onto the main part of the proof, in which we repeatedly use the continuity of the path $\gamma$.
 
\begin{enumerate}[label = (\Alph*), wide,labelindent=0pt]
\item Suppose that $t_1 \in (0,1)$. 
  Then, there exists a unique first edge $e_1$ and a unique cell $c_1$ containing $\gamma([0, t_1])$
  (indeed, note that $\gamma((0,t_1))$ must be contained in the interior of a cell).
  $c_1$ must contain $e_1$ - let $c_2$ be the other cell containing $e_1$.
\begin{itemize}[itemindent = 20pt]
    \item[(A1)] If there exists a zeroth edge contained in $c_1$ which is parallel to $e_1$, then $c_1$ is a g-cell.
    \item[(A2)] Suppose there exists a second edge $e$ which is contained in a cell $c \in \set{c_1,c_2}$ and which is parallel to $e_1$.
      By the definition of a second edge, $e \neq e_1$.
      $\gamma([\tilde{t}_1,t_2])$ is contained in $c$ and connects the distinct parallel edges $e$ and $e_1$ of $c$, therefore, $c$ is a g-cell. 
    \item[(A3)] In the only other case, there exists a zeroth edge $e_0$ contained in the cell $c_1$ and a second edge $e_2$ contained in a cell $c \in \set{c_1,c_2}$ such that both $e_0$ and $e_2$ are perpendicular to the edge $e_1$. 
    It follows that $e_0$ and $e_2$ are distinct, parallel edges. 
    Furthermore, the edges $e_0$ and $e_2$ are contained in distinct long edges of the long-cell $\set{c_1,c_2}$, hence the long edges of $\set{c_1,c_2}$ are connected by $\gamma([0,t_2])$.
    Since $\gamma([0,t_2])$ is contained in  $c_1 \cup c_2$, $\set{c_1,c_2}$ is an lg-cell.
\end{itemize}
\noindent
We conclude that if $t_1 \in (0,1)$, then there is a g-cell or an lg-cell contained in $D_1[c_0]$.
 \item 
 Suppose that $t_1 = 1$. Then, $\gamma([0,1])$ is contained entirely in one cell since $\gamma(t)$ does not lie in any edge for every $t \in (0,1)$.
 \begin{itemize}[itemindent = 20pt]
 \item[(B1)]  Suppose $\gamma(1)$ is in the interior of an edge $e$ belonging to $c_0$.
   Then the unique cell $c \neq c_0$  containing $e$ also contains a zeroth edge parallel to $e$, as well as $\gamma([0,1])$ in its entirety.
   In this case, $c$ is a g-cell.
    \item[(B2)]  In the only other case, $\gamma(1)$ is not in the interior an edge so $\gamma(1)$ is a corner of $c_0$. 
      Then, there are four first edges, each of which is parallel and sharing a cell with exactly one of the four possible zeroth edges.
      Consequently, in this case the cell containing $\gamma([0,1])$ in its entirety is a g-cell. 
 \end{itemize}
\noindent
We conclude that if $t_1 = 1$, then there is a g-cell contained in $D_1[c_0]$. 
 
\item

Suppose that $t_1 = 0$.
In this case, there exists a unique first edge $e_1$. 
 \begin{itemize}[itemindent = 20pt]
  \item[(C1)] Suppose that $\tilde{t}_1 = t_2$. 
  Let $c_1$ and $c_2$ be the cells sharing the edge $e_1$. 
  In this case $\gamma(0)\in \partial D_1[c_0]$ and $\gamma(t_2) \in \partial c_0$ belong to opposite extremal points of the edge $e_1$ hence belong to distinct long-edges of the long cell $\set{c_1,c_2}$.
  Furthermore, since the only edge that $\gamma((0,t_2))$ can intersect is  $e_1$, $\gamma([0,t_2]) \subset c_1 \cup c_2$ hence $\set{c_1,c_2}$ forms an lg-cell.
 \end{itemize}
Suppose, on the other hand, that $\tilde{t}_1 < t_2$. 
Then, since $\gamma(t)$ does not lie in any edge for $t \in (\tilde{t}_1,t_2)$, there exists a unique cell $c_1$ containing $\gamma([\tilde{t}_1,t_2])$. 
$c_1$ must contain the edge $e_1$ - let $c_2$ denote the other cell containing the edge $e_1$. 
 $c_1$ must contain at least one second edge so we have the following possibilities. 

 \begin{itemize}[itemindent = 20pt]
  \item[(C2)] If there exists a second edge $e$ contained in $c_1$ which is parallel to $e_1$, then $c_1$ is a g-cell since $\gamma([\tilde{t}_1,t_2])$ connects $e_1$ and $e$. 
  \item[(C3)] In the only other possibility, there exists a second edge contained in $c_1$ which is perpendicular to $e_1$. 
  In this case, $\gamma(0)$ and $\gamma(t_2)$ are contained in distinct long-edges of the long-cell $\set{c_1,c_2}$, hence $\set{c_1,c_2}$ forms an lg-cell.
 \end{itemize}
\noindent
 We conclude that if $t_1 = 0$, then there exists a g-cell or an lg-cell contained in $D_1[c_0]$.
\end{enumerate}
We have covered every possible case, proving the lemma.
\end{proof}

  \begin{figure}[h!]
  	\centering
  	\includegraphics[width=0.9\textwidth]{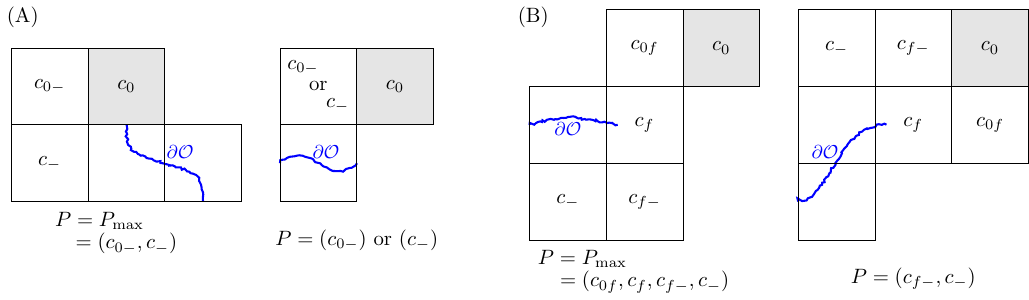}
  	\caption{Examples of cell-paths for different cases in the proof of Lemma \ref{lem:loc-cell-path}.}
      \end{figure}
 
      Next we construct a cell-path for each covering cell, using the above lemma as well as the fact that there is a filled cell in the
      1-cell neighbourhood of any covering cell. 
\begin{lemma}\label{lem:loc-cell-path}
  For any covering cell $c_0$, there exists a g-cell $c_n$ (or an lg-cell $\set{c_n, c_{n+1}}$)  contained in $D_2[c_0]$  and a cell-path $(c_1,...,c_{n-1})$ from $c_0$ to $c_n$
  (or to $\set{c_n,c_{n+1}}$ resp.) such that  $c_j$ is a covering cell contained in $D_2[c_0]$ for all $j \in \set{1,...,n-1}$.
  Furthermore, $n \leq 5$.
\end{lemma}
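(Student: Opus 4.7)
The plan is to first locate a filled cell $c_f$ inside $D_1[c_0]$, apply Lemma \ref{lemma:exists_g-cell} to produce a g-cell or lg-cell in $D_1[c_f]$, and then explicitly construct a short edge-adjacent cell-path from $c_0$ to this terminal cell within the $3 \times 3$ block $D_1[c_f]$.

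Since $c_0$ is a covering cell, by definition there exists a filled cell $c_f$ with $c_f \cap c_0 \neq \emptyset$, so $c_f \in D_1[c_0]$. Consequently $D_1[c_f] \subseteq D_2[c_0]$, and every cell in $D_1[c_f]$ shares an edge or corner with the filled cell $c_f$ and is therefore itself a covering cell. Applying Lemma \ref{lemma:exists_g-cell} to $c_f$ (under the standing hypothesis $Q(\partial \cO) > 4\sqrt{2}r$) yields a g-cell $c_n$ or lg-cell $\set{c_n,c_{n+1}}$ contained in $D_1[c_f] \subseteq D_2[c_0]$, which is the terminal object demanded by the lemma.

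It remains to build the sequence $(c_1,\ldots,c_{n-1})$ inside $D_1[c_f]$. I would revisit the construction from the proof of Lemma \ref{lemma:exists_g-cell}: the continuous arc $\gamma \subseteq \partial \cO$ constructed there traces a short chain of cells of $D_1[c_f]$ running from $\partial D_1[c_f]$ inward, and the terminal g-cell/lg-cell is one of at most two cells at the end of this chain. Reading this chain in reverse starting from the terminal cell produces a candidate sequence of edge-adjacent covering cells, to which I prepend at most one bridging cell if $c_0 \cap c_f$ is only a corner (a single cell in $D_1[c_f]$ sharing an edge with both $c_0$ and $c_f$ always exists inside the $3\times 3$ block). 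This yields an edge-adjacent sequence from $c_0$ to the terminal g-cell/lg-cell lying entirely in $D_1[c_f]$.

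The main obstacle is verifying, case by case through the sub-cases (A1)--(A3), (B1)--(B2), (C1)--(C3) of Lemma \ref{lemma:exists_g-cell}, that the last step of the constructed sequence enters the terminal cell through one of its \emph{normal} edges (as required by Definition \ref{de:cell-path}), and not through $e_1$ or $e_2$. In each sub-case this follows from the geometry of how $\gamma$ first crossed into the terminal cell: the edge $\gamma$ crosses is transverse to the pair $(e_1,e_2)$, hence is a normal edge. The distinctness condition in Definition \ref{de:cell-path} is immediate because the constructed chain occupies at most four distinct cells inside the $3\times 3$ block $D_1[c_f]$. Once these checks are carried out in each sub-case (as illustrated in the figure preceding the statement of the lemma), the resulting cell-path satisfies all three conditions of Definition \ref{de:cell-path}.
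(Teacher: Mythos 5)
Your high-level strategy (find a filled cell $c_f$ in $D_1[c_0]$, invoke Lemma \ref{lemma:exists_g-cell} to get a terminal g-cell or lg-cell in $D_1[c_f]\subseteq D_2[c_0]$, then build a short cell-path inside that block) is the same one the paper uses. But the mechanism you propose for building the cell-path has a genuine gap, and it is precisely at the step you flagged as ``the main obstacle.''

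You claim that ``the edge $\gamma$ crosses is transverse to the pair $(e_1,e_2)$, hence is a normal edge.'' This is backwards. The g-cell is \emph{identified} in Lemma \ref{lemma:exists_g-cell} precisely because the boundary arc $\gamma$ joins the two parallel edges $e_1$ and $e_2$ inside it; so when $\gamma$ leaves the g-cell, it does so through $e_1$ or $e_2$, i.e.\ through one of the distinguished parallel edges, \emph{not} through a normal edge. For instance, in case (A1), $\gamma([0,t_1])$ lies in the g-cell $c_1$ and exits across the first edge $e_1$, which together with a zeroth edge forms the distinguished pair; the normal edges of $c_1$ are the two perpendicular edges, and neither is crossed by $\gamma$. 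Consequently, ``reading the $\gamma$-chain in reverse'' lands you adjacent to the g-cell along a distinguished edge, which does not satisfy hypothesis (2) of Definition \ref{de:cell-path}. To fix this you must make a sideways move: the paper introduces an explicit cell $c_-$ sharing a \emph{normal} edge with the g-cell (or lg-cell), and a further bridging cell $c_{f-}$ between $c_f$ and $c_-$, before running the path $c_0\to c_{0f}\to c_f\to c_{f-}\to c_-\to$ terminal cell. This sideways detour is not something your $\gamma$-chain construction produces.

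Two smaller points. First, you claim distinctness is ``immediate'' because the chain has few cells, but in the candidate sequence $(c_{0f},c_f,c_{f-},c_-)$ several of these cells can coincide with $c_0$ or with each other (e.g.\ if $c_0$ itself is filled, or if $c_0$ shares an edge with $c_f$); the paper handles this by passing to a length-minimising subsequence of the candidate sequence that still satisfies hypotheses (1) and (2), which automatically kills repeats. Second, the notion of ``the chain of cells $\gamma$ passes through'' is itself ill-defined when $\gamma$ grazes a corner, so it does not obviously yield an edge-adjacent sequence; the paper avoids this by constructing the candidate cells combinatorially from the grid rather than from the arc $\gamma$.
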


\begin{proof}
  Let $c_0$ be a covering cell. We aim to construct a g-cell $c_g $ (or an lg-cell $\clg$) and a cell path $P$ from $c_0$ to $c_g$
  (or to $\clg$ resp). If $c_0$ is a g-cell (or in an lg-cell), then we define $P$ to be empty.
  We consider the two remaining possible cases.
  \begin{enumerate}
  \item[(A)] Suppose first that there exists a g-cell $c_g \subset \red{D_1[c_0] \bs \intt(c_0)}$ (or an lg-cell $\clg$ with $c_{lg}^{(j)} \subset \red{D_1[c_0] \bs \intt(c_0)}$ for some $j \in \set{1,2}$).
    Let $\tilde{c}_g := \intt(c_g)$ (or $\tilde{c}_g := \intt(c_{lg}^{(1)} \cup c_{lg}^{(2)})$ resp.). There exists a cell $c_- \subset D_1[c_0] \bs \tilde{c}_g$ which shares a normal edge
    with $c_g$ (or with $\clg$ resp.).  There exists a cell $c_{0-} \subset D_1[c_0] \bs \tilde{c}_g$ which shares an edge with both  $c_0$ and $c_-$.

    \red{Consider the sequence of cells 
    \begin{equation*}
      P_{\max} := (c_{0-},c_-).
    \end{equation*}
    Recall Definition \ref{de:cell-path} for a cell path. 
    $P_{\max}$ satisfies hypothesis (1) of this definition since $c_{0-}$ shares a common edge with $c_0$ and $c_-$ shares an edge with $c_{0-}$.
    Furthermore, $P_{\max}$ also satisfies hypothesis (2) since there exists an edge of $c_-$ which is also a normal edge of $c_g$ (or of $\clg$ resp.).
    However, $P_{\max}$ may not form a cell path since it may contain $c_0$ or contain repeated elements, hence, hypothesis (3) may not be satisfied.
  }
  
    Define $P$ as any length-minimising subsequence of $P_{\max}$
    such that hypotheses (1) and (2) of Definition \ref{de:cell-path} are still satisfied.
    Note that $P$ always exists since $P_{\max}$ satisfies these hypotheses.
    $P$ cannot contain $c_0$ or repeated elements since this would yield a shorter such subsequence.
    $P$ does not contain $c_g$ (or an element of $\clg$ resp.) by definition.
    Consequently, $P$ satisfies hypothesis (3) of Definition \ref{de:cell-path} and is a cell path.
    $c_{0-}$ and $c_0$ are contained in $D_1[c_0]$ and share an edge or corner with a g-cell (or an lg-cell respectively)
    so the elements of $P$ are covering cells contained in $D_1[c_0] \subset D_2[c_0]$. 
    
  \item[(B)] In the other case, there does not exist a g-cell, or an element of an lg-cell, contained in $D_1[c_0]$.
    Since $c_0$ is a covering cell, there exists a filled cell $c_f \subset D_1[c_0]$. There exists a cell $c_{0f} \subset D_1[c_0]$ sharing an edge with both $c_0$ and $c_f$.
    By Lemma \ref{lemma:exists_g-cell}, there exists a g-cell $c_g \subset D_1[c_f]$ (or an lg-cell $\clg$ contained in $D_1[c_f]$).
    Note that, by assumption, $c_f$ and $c_{0f}$ are distinct from $c_g$ (or from both elements of $\clg$ resp.).
    Let $\tilde{c}_g := \intt(c_g)$ (or $\tilde{c}_g := \intt(c_{lg}^{(1)} \cup c_{lg}^{(2)})$ resp.). There exists a cell $c_- \subset D_1[c_f] \bs \tilde{c}_g$ which shares a normal edge
    with $c_g$ (or with $\clg$ resp.).  There exists a cell $c_{f-} \subset D_1[c_f] \bs \tilde{c}_g$ which shares an edge with both  $c_f$ and $c_-$.

    \red{Following a similar strategy as in (A),} define $P$ as any length-minimising subsequence of 
    \begin{equation*}
      P_{\max} := (c_{0f},c_f, c_{f-},c_-)
    \end{equation*}
    such that hypotheses (1) and (2) of Definition \ref{de:cell-path} are satisfied.
    By a similar reasoning as in (A), $P$ is a cell-path and its elements are covering cells contained in $D_2[c_0]$.
  \end{enumerate}

Finally, the fact that $n \leq 5$ follows from the fact that
the length of the sequence $P_{\mathrm{max}}$ is bounded by 4 in each case.
\end{proof}

Finally, we utilise the cell-paths that we have constructed, combined with the Poincar\'e-type inequality
for the cell-paths, to prove the Poincar\'e-type inequality on $\col{r}\cO$.

\subsection{Proof of Theorem \ref{thm:poin}}\label{sec:proof-poin}

Let $\set{c_j}$ be the set of covering cells. Then, $\col{r} \cO \subseteq \bigcup_j c_j$.

For each $c_j$ which is not a g-cell or in an lg-cell, \red{Lemma \ref{lem:loc-cell-path} guarantees the existence of:}
\begin{Red}
  \begin{itemize}
    \item an integer $1 \leq n_j \leq 5$,
    \item an associated g-cell $\red{c_{j,n_j}}$ (or an associated lg-cell $\red{\set{c_{j,n_j},c_{j,n_j+1}}}$) and 
    \item  (if $n_j \geq 2$) an associated cell-path $\red{(c_{j,1},...,c_{j,n_j - 1})}$ from $c_j$ to $\red{c_{j,n_j}}$
      (or to $\red{\set{c_{j,n_j},c_{j,n_j+1}}}$ resp.).
  \end{itemize}
\end{Red}
 If $c_j$ is a g-cell itself then there are no such associated cells and if $c_j$ is in an lg-cell, let $\red{c_{j,1}}$ be the other cell in the lg-cell.
 For each $j$, in the case of an associated g-cell, let $N_j := n_j$ and in the case of an associated lg-cell let $N_j := n_j +1$.
 Additionally, in the case that $c_j$ is a g-cell, let $N_j = 0$ and in the case that $c_j$ is in an lg-cell, let $N_j = 1$.
 \red{Assume that $c_{j,k}$ satisfy the properties prescribed by Lemma \ref{lem:loc-cell-path}, that is, $\red{c_{j,k}}$ is a covering cell contained in $D_2[c_j]$ for each $j$
   and each $k \in \set{1,...,N_j}$, and $(c_j,\red{c_{j,1},...,c_{j,N_j}})$ consists of distinct elements.}
 
 Applying Lemma \ref{lem:tetriminos}, we have, 
\begin{equation}\label{eqpr:poin}
 \norm{u}^2_{L^2(\col{r} \cO)} \leq \sum_j \norm{u}^2_{L^2(c_j)} \leq 12 r^2 \sum_j \br*{ \norm{\nabla u}^2_{L^2(c_j)} + \sum_{k=1}^{N_j}  \norm{\nabla u}^2_{L^2(\red{c_{j,k}})}}
\end{equation}
where the sum over $k$ is empty in the case $N_j = 0$.
Since the associates to a given covering cell are in its 2-cell neighbourhood,
each covering cell can be an associate to at most 24 other covering cells (indeed, there are 25 cells in a 2-cell neighbourhood).
Furthermore, the associates to a given covering cell are distinct and each associate $c_{j,k}$ is a covering cell so it follows from \eqref{eqpr:poin} that 
\begin{equation*}
 \norm{u}^2_{L^2(\col{r} \cO)} \leq 12 \times 25 r^2 \sum_j \norm{\nabla u}^2_{L^2(c_j)} \leq 300 r^2 \norm{\nabla u}^2_{L^2(\col{2 \sqrt{2} r} \cO)}
\end{equation*}
where the last inequality holds since $\intt (\cup_j c_j ) \subseteq \col{2\sqrt{2} r} \cO$.

\section{Mosco convergence}\label{sec:mosco}

In this section, we establish a general Mosco convergence theorem and apply it to pixelated domain approximations. 
We will make use of the notion of an \red{open $\epsilon$-neighbourhood} $\dil_\epsilon(A)$ of a set $A \subset \R^d$ - recall that this is defined by equation (\ref{eq:dil-defn}).

\subsection{From uniform Poincar\'e-type inequalities to Mosco convergence}\label{sec:from-unif-poinc}

The first step is to prove Mosco convergence for sequences of domains $(\cO_n)$ which satisfy a Hausdorff convergence condition
to a limit domain $\cO$ and which verify a certain Poincar\'e-type inequality uniformly for the whole sequence.
Such a uniform Poincar\'e inequality does not follow immediately from the results of the previous section,
but will be established in Section \ref{sec:gener-mosco-conv} under suitable hypotheses. 

The following fact shall be useful.

\begin{lemma}\label{lem:stabwound}
 For any non-empty, bounded sets $A,B \subset \R^d$, we have 
 \begin{equation*}
  \sup_{x \in A^c \cap B} \dist (x, \partial B) \leq \d_H(A, B) + \d_H(\partial A, \partial B).
 \end{equation*}
\end{lemma}

\begin{proof}
 This  holds because
 \begin{align*}
  \sup_{x \in A^c \cap B} \dist (x, \partial B) & \leq \sup_{x \in A^c \cap B} \dist (x, \partial A) + \d_H(\partial A, \partial B) \\
  & = \sup_{x \in A^c \cap B} \dist (x, A) + \d_H(\partial A, \partial B) \\
  & \leq  \d_H(A, B) + \d_H(\partial A, \partial B).
 \end{align*}
\end{proof}

The proof of the next proposition uses a construction of certain cut-off functions to
directly prove that the two conditions in Definition \ref{defn:mosco-convergence} for Mosco convergence hold.  
Note that the regularity of the limit domain $\cO$ is not yet required.
We do require, however, that the Lebesgue measure of the boundary $\partial \cO$ vanishes, which ensures that the Lebesgue measure of the collar neighbourhood
$\col{\epsilon}\cO$ tends to $0$ as $\epsilon \to 0$. 

\begin{prop}\label{prop:mosco}
Let  $\cO \subset \R^d$ and $\cO_n \subset \R^d,\,n \in \N$, be bounded, open sets such that the following holds: 
\begin{enumerate}[label= \rm(\alph*)]
  \item $l(n) := \d_H(\cO, \cO_n) + \d_H(\partial \cO, \partial \cO_n) \to 0$ as $n \to \infty$.
  \item There exist
  \begin{enumerate}[label = \rm(\roman*)]
   \item $(f(n))_{n \in \N}$ such that $2 l(n) \leq f(n)$ for all $n \in \N$ and $f(n) \to 0$ as $n \to \infty$,
   \item  constants $C, \alpha > 0$ independent of $n$ and $u$,
  \end{enumerate}
    such that, if either $V = \cO$ or $V=\cO_n$ for some large enough $n \in \N$, then for all $u \in H^1_0(V)$ we have
  \begin{equation}\label{eq:poin-for-Mosco}
   \norm{u}_{L^2(\col{f(n)} V)} \leq C f(n) \norm{\nabla u}_{L^2(\col{\alpha f(n)} V)}.
 \end{equation}
 \item $\vol(\partial \cO) = 0$. 
 \end{enumerate}
Then, $\cO_n$ converges to $\cO$ in the Mosco sense as $n \to \infty$.
 
\end{prop}

\begin{proof}
 
 Throughout the proof, let $L^p$ denote $L^p(\R^d)$ for $p = 2, \infty$ and let $H^1$ denote $H^1(\R^d)$.
 All limits will be as $n \to \infty$.
 
 Define function $\tilde{\chi}:\R_+ \to [0,1]$ by
 \begin{equation}
    \tilde{\chi}(t) := \begin{cases} 
    t & \text{if } t\in [0,1) \\
                       1 & \text{if }t \in [1,\infty).
                       \end{cases}
\end{equation}
$\tilde{\chi}$ is weakly differentiable with $\norm{\tilde{\chi}'}_{L^\infty} = 1$.
$\tilde{\chi}$ will be used in the construction of a cut-off function $\chi_n$ in both Step 1 and Step 2 below.
We shall also require the following two facts. 
Firstly, for any $A \subset \R^d$ with piecewise smooth boundary, the function $x \mapsto \dist(x,A)$ is continuous and
piecewise smooth hence weakly differentiable.
Furthermore, since
\begin{equation*}
 \abs*{\dist(x,A) - \dist(y, A)} \leq |x - y|\qquad (x,y \in \R^d),
\end{equation*}
the $L^\infty$  norm of $x \mapsto \nabla_x \dist(x, A) $ is bounded by 1. 

\noindent
 \textit{Step 1 (Mosco convergence condition (1)).}
 Let $u_n \in H^1_0(\cO_n)$, $n \in \N$, and suppose that $u_n \wto u$ in $H^1$ for some $u \in H^1$. 
We aim to show that $u \in H^1_0(\cO)$. 

Let $P: H^1 \to H^1_0(\cO)$ be the orthogonal projection.
If $(w_n) \subset H^1_0(\cO)$ and $w_n \wto u$ in $H^1$ then
\begin{equation*}
    \inner{u,(1 - P)\phi}_{H^1} = \lim_{n \to \infty} \inner {w_n, (1 - P) \phi}_{H^1} = 0 \qquad (\phi \in H^1) 
\end{equation*}
so $u \in H^1_0(\cO)$.
Hence it suffices to show that there exists $w_n \in H^1_0(\cO)$ such that $w_n \wto u$ in $H^1$. 

Assume without loss of generality that $f(n) > 0$ for all $n \in \N$.
For each $n \in \N$, let $A_n \subset \R^d$ be an open neighbourhood of $\cO^c$ with piecewise smooth boundary such that $A_n \cap \cO \subseteq \col{f(n)/4} \cO$ ($A_n$ can be constructed by an open cover of balls of radius $f(n)/4$ for instance).

Define a cut-off function $\chi_n:\R^d \to [0,1]$ by 
\begin{equation}
 \chi_n (x) = \tilde{\chi}(4f(n)^{-1} \dist(x, A_n)) \qquad (x \in \R^d).
\end{equation}
Then, $\chi_n = 0$ on an open neighbourhood of $\cO^c$ and $\chi_n(x) = 1$ for any $x \in \cO_n$ outside the set
\begin{equation*}
    \cU_n := \set*{x \in \cO_n: \dist(x, \cO^c) \leq f(n)/2}.
  \end{equation*}
By the piecewise smoothness of $\partial A_n$, $\chi_n$ is weakly differentiable and, by an application of the chain rule,
\begin{equation}\label{eq:grad-chi-n}
    \norm{\nabla \chi_n}_{L^\infty} \leq 4 f(n)^{-1}. 
  \end{equation}
  
By Lemma \ref{lem:stabwound}, we have
\begin{equation}\label{eq:mosco-prop-1}
 \sup_{x \in \cO^c \cap \cO_n} \dist(x, \partial \cO_n) \leq l(n) \leq \frac{f(n)}{2}.
\end{equation}
We claim that $\cU_n \subset \col{f(n)} \cO_n$. To see this, let $x \in \cU_n$. Then there exists $y \in \cO^c$ with $|x - y| \leq f(n)/ 2$.
If $y \in \cO_n$, then inequality (\ref{eq:mosco-prop-1}) implies that $\dist(y, \partial \cO_n) \leq f(n)/2$ so $\dist(x, \partial \cO_n) \leq f(n)$.
If $y \notin \cO_n$ on the other hand, then, since $x \in \cO_n$, $\dist(x,\partial \cO_n) \leq |x - y| \leq f(n)/2$ proving the claim. 

Furthermore, for any $x \in \cU_n$, we have
\begin{equation*} 
  \dist(x, \partial \cO) \leq l(n) + \dist(x, \partial \cO_n) \leq l(n) + f(n)
\end{equation*}
so by hypothesis (c) and continuity of measures from above,
\begin{equation*}
\vol(\cU_n) \leq \vol(\dil_{l(n) + f(n)}(\partial \cO)) \to \vol(\partial \cO) = 0.
\end{equation*}

Let $w_n := \chi_n u_n$. Then, since $\chi_n = 0$ on an open neighbourhood of $\cO^c$, we have $w_n \in H^1_0(\cO)$ and it suffices to show that $w_n = \chi_n u_n \wto u $ in $H^1$.
Let $\phi \in H^1$ be an arbitrary test function. Firstly, we have,
\begin{equation*}
 \abs*{\inner{\chi_n u_n - u, \phi}_{H^1}} \leq  \underbrace{\abs*{\inner{\chi_n u_n - u, \phi}_{L^2}}}_{(A1)} + \underbrace{\abs*{\inner{\nabla(\chi_n u_n) - \nabla u, \nabla \phi}_{L^2}}}_{(A2)}.
\end{equation*}
Focusing on the term (A1), 
\begin{align*}
 \abs*{\inner{\chi_n u_n - u, \phi}_{L^2}} & \leq  \abs*{\inner{\chi_n u_n - u_n, \phi}_{L^2}} +  \abs*{\inner{u_n - u, \phi}_{L^2}} \\
 & \leq \norm{ u_n}_{L^2(\cU_n)} \norm{\phi}_{L^2(\cU_n)} + \abs*{\inner{u_n - u, \phi}_{L^2}} \to 0.
\end{align*}
Here, the second inequality holds since $u_n = 0$ almost everywhere outside $\cO_n$ and so $\chi_n u_n = u_n$ almost everywhere outside $\cU_n$.
The limit holds by the weak convergence of $(u_n)$ (so also $(u_n)$ is bounded in $H^1$) as well as the fact that $\vol(\cU_n) \to 0$.

Focusing on the term (A2), 
\begin{equation*}
 \abs*{\inner{\nabla(\chi_n u_n) - \nabla u, \nabla \phi}_{L^2}} \leq   \underbrace{\abs*{\inner{\nabla(\chi_n u_n) - \nabla u_n, \nabla \phi}_{L^2}}}_{(B1)} +   \underbrace{\abs*{\inner{\nabla u_n - \nabla u, \nabla \phi}_{L^2}}}_{(B2)}.
\end{equation*}
The term (B2) tends to zero by the weak convergence of $(u_n)$.
Focusing on the term (B1),
\begin{align*}
 \abs*{\inner{\nabla(\chi_n u_n) - \nabla u_n, \nabla \phi}_{L^2}} & \leq   \abs*{\inner{\chi_n \nabla u_n - \nabla u_n, \nabla \phi}_{L^2}} + \abs*{\inner{\nabla(\chi_n) u_n, \nabla \phi}_{L^2}}\\
& \leq  \underbrace{ \norm{\nabla u_n}_{L^2(\cU_n)} \norm{\nabla \phi}_{L^2(\cU_n)}}_{(C1)} + \underbrace{\norm{\nabla \chi_n}_{L^\infty} \norm{u_n}_{L^2(\cU_n)} \norm{\nabla \phi}_{L^2(\cU_n)} }_{(C2)}
\end{align*}
where in the second inequality we used the fact that $\chi_n \nabla u_n = \nabla u_n$ almost everywhere outside $\cU_n$ and the fact that $\supp(\nabla(\chi_n)) \cap \cO_n \subseteq \cU_n$.
The term (C1) tends to zero since $(u_n)$ is bounded in $H^1$ and $\vol(\cU_n) \to 0$. 
Focusing on the term (C2), notice first that, by the assumed Poincar\'e-type inequality \eqref{eq:poin-for-Mosco},
\begin{equation*}
 \norm{u_n}_{L^2(\cU_n)} \leq \norm{u_n}_{L^2(\col{f(n)} \cO_n)} \leq  C f(n) \norm{\nabla u_n}_{L^2(\partial \cO_n^{ \alpha f(n)})}, 
\end{equation*}
 and so, using \eqref{eq:grad-chi-n}, 
 \begin{equation*}
  \norm{\nabla \chi_n}_{L^\infty} \norm{u_n}_{L^2(\cU_n)} \norm{\nabla \phi}_{L^2(\cU_n)} \leq 4 C \norm{\nabla u_n}_{L^2(\col{\alpha f(n)} \cO_n)} \norm{\nabla \phi}_{L^2(\cU_n)} \to 0.
 \end{equation*}
It follows that the term (A2) tends to zero, that $w_n \wto u$ in $H^1$ and hence that $u \in H^1_0(\cO)$.

\noindent
\textit{Step 2 (Mosco convergence condition (2)).} 
Let $u \in H^1_0(\cO)$ - we aim to show that there exists $u_n \in H^1_0(\cO_n)$ such that $u_n \to u$ in $H^1$. Note that in this part of the proof, we shall redefine $A_n$, $\chi_n$ and $\cU_n$. 

For each $n \in \N$, let $A_n \subset \R^d$ be an open neighbourhood of $\cO_n^c$ with piecewise smooth boundary such that $A_n \cap \cO_n \subseteq \col{f(n)/4} \cO_n$. 
Define a cut-off function $\chi_n:\R^d \to [0,1]$ by 
\begin{equation}
 \chi_n (x) = \tilde{\chi}(4 f(n)^{-1} \dist(x, A_n)) \qquad (x \in \R^d).
\end{equation}
Then, $\chi_n = 0$ on an open neighbourhood of $\cO_n^c$ and $\chi_n(x) = 1$ for any $x \in \cO$ outside the set
\begin{equation*}
    \cU_n := \set*{x \in \cO: \dist(x, \cO_n^c) \leq f(n)/2}.
\end{equation*}
$\chi_n$ is weakly differentiable and, by an application of the chain rule,
\begin{equation}\label{eq:grad-chi-n-2}
    \norm{\nabla \chi_n}_{L^\infty} \leq 4 f(n)^{-1}. 
\end{equation}
By Lemma \ref{lem:stabwound}, 
\begin{equation*}
 \sup_{x \in \cO_n^c \cap \cO} \dist(x, \partial \cO) \leq l(n) \leq \frac{f(n)}{2}
\end{equation*}
so, by a similar reasoning as in Step 1, we have  $\cU_n \subseteq \col{f(n)} \cO$.
By hypothesis (c) and continuity of measures from above, $\vol(\cU_n) \to 0$.

Let $u_n := \chi_n u$. Then $u_n\in H^1_0(\cO_n)$ since $\chi_n$ vanishes on an open neighbourhood of $\cO_n^c$.
Firstly, 
\begin{equation*}
 \norm{u_n - u}_{H^1} \leq \underbrace{\norm{\chi_n u - u}_{L^2}}_{(D1)} + \underbrace{\norm{\nabla(\chi_n u) - \nabla u}_{L^2}}_{(D2)} . 
\end{equation*}
Focusing on the term (D1) and using the fact that $\chi_n u = u$ almost everywhere outside $\cU_n$, 
\begin{equation*}
 \norm{\chi_n u - u}_{L^2}  = \norm{\chi_n u - u}_{L^2(\cU_n)} \leq  \norm{u}_{L^2(\cU_n)} \to 0.
\end{equation*}
Focusing on the term (D2), we have
\begin{equation*}
 \norm{\nabla(\chi_n u) - \nabla u}_{L^2} \leq \underbrace{\norm{\chi_n \nabla u - \nabla u}_{L^2}}_{(E1)} + \underbrace{\norm{\nabla(\chi_n) u }_{L^2}}_{(E2)}. 
\end{equation*}
The term (E1) tends to zero by the same reasoning that was applied to (D1). 
Focusing on the term (E2), notice first that, by the assumed Poincar\'e-type inequality \eqref{eq:poin-for-Mosco},
\begin{equation*}
 \norm{u}_{L^2(\cU_n)} \leq \norm{u}_{L^2(\col{f(n)} \cO)} \leq C f(n) \norm{\nabla u}_{L^2(\col{\alpha f(n)} \cO)}, 
\end{equation*}
 and so, by \eqref{eq:grad-chi-n-2}, 
\begin{equation*}
\norm{\nabla(\chi_n) u }_{L^2} = \norm{\nabla(\chi_n) u }_{L^2(\cU_n)} \leq 4 f(n)^{-1} \norm{u}_{L^2(\cU_n)} \leq 4 C \norm{\nabla u}_{L^2(\col{\alpha f(n)} \cO)} \to 0.
\end{equation*}
It follows that the term (D2) tends to zero hence $u_n \to u $ strongly in $H^1$ as required.
\end{proof}

\subsection{Characterisation of $\partial \cO_n$ for large $n$}\label{sec:gener-mosco-conv}
In order to verify the uniform Poincar\'e-type inequality needed to apply Proposition \ref{prop:mosco},
we shall require additional hypotheses, such as regularity of the limit domain $\cO$ and $\#_c \intt(\cO^c) =  \#_c(\cO^c) < \infty$.
With these hypotheses at hand, we shall provide in Proposition \ref{prop:large+lonely} a characterisation of some geometric properties
of the boundaries of sequences of domains $\cO_n$, for large $n$.
Roughly speaking, we shall prove that for each connected component $\partial D_j$ of  $\partial \cO$,
there exists a ``large'' path-connected subset $\gamma_n^{(j)}$ of $\partial \cO_n$ such that $\gamma_n^{(j)}$ has comparable diameter to $\partial D_j$
and every other point in $\partial \cO_n$ is close to one of the large subsets $\gamma_n^{(j)}$.
Then, in Section \ref{sec:mosco-proof}, this characterisation is used in conjunction with the explicit Poincar\'e-type inequality
of Theorem \ref{thm:poin} to obtain the general Mosco result Theorem \ref{th:mosco} .

Let us collect some geometric and topological lemmas in preparation for the proof of Proposition \ref{prop:large+lonely}. 
Firstly, we shall require the following basic fact \red{(recall the definition of a regular domain (\ref{eq:regular-defn}))}:
\begin{equation*}
\text{An open set } A \subset \R^d\text{ is regular if and only if }A^c \subset \R^d\text{ is the closure of an open set.}
\end{equation*}
Next, let us solidify a notion of an outer boundary component for a domain.
In particular, this notion shall be crucial in defining boundary subsets $\gamma_n^{(j)}$.
\begin{de}\label{de:outer}
  The \textit{outer boundary}  $\out A$ of a bounded, connected set $A \subset \R^d$ is defined as the boundary $\partial \Gamma$
  of the unique unbounded connected component $\Gamma$ of $A^c$.
\end{de}

The next lemma is required to ensure that the large boundary subsets $\gamma_n^{(j)}$ are path-connected. 
\begin{lemma}
  \label{lem:outer-connected}
  Suppose that $A \subset \R^2$ is bounded, connected and either open or closed.
  If $\partial A$ \red{is} locally connected, then $\out A$ is path-connected.
\end{lemma}
\begin{proof}
  It is a consequence of the Carath\'eodory theorem \cite[Theorem 2.1]{douadyExploringMandelbrotSet} that if $K \subset \R^2$ is a connected, compact set
  with $\R^2 \bs K$ connected and there exists a locally connected, compact set $L$ such that $\partial K \subseteq L \subseteq K$, then there exists
  a continuous, surjective map $\Psi: \R^2 \bs B_1(0) \to \R^2 \bs \intt(K)$. Restricting the map $\Psi$ yields a continuous, surjective
  map $\gamma: \partial B_1(0) \to \partial K$ (the so-called \textit{ Carath\'eodory loop}), showing that $\partial K$ is path-connected.

  Let $A \subset \R^2$ be bounded and connected with $\partial A$ locally connected.
  Let $\Gamma$ denote the unique unbounded connected component of $A^c$ and let $E:=A^c \bs \Gamma$.

  Consider first the case that $A$ is closed.
  Let $K := A \cup E$. Then $K$ is compact, connected and $K^c = \Gamma$ is connected.
  Let $L := \partial A$. Then, $L$ is compact, locally connected and satisfied $\out A = \partial K \subseteq L \subseteq K$ so $\out A$ is path-connected.

  Now suppose that $A$ is open. $\out A = \partial \Gamma$ is connected since $\Gamma$ and $\Gamma^c = A \cup E$ are connected \cite{czarneckiConnectednessBoundaryComplement2011}.
  Furthermore, in this case, $\out A$ is a connected component of $\partial A$ since $\partial \Gamma \subset \Gamma$ and $\Gamma$ is separated
  from any other connected component of $A^c$. It follows that $\out A$ is a connected, locally connected and compact metric space hence path-connected
  \cite[Lemma 16.4]{milnorDynamicsOneComplex1990}.
\end{proof}

The next lemma gives a property of the outer boundary of \red{an open neighbourhood of a} set.
 It shall be utilised in Proposition \ref{prop:large+lonely} to help show that every point in
the boundary $\partial \cO_n$ of the approximating domains is close to a large subset $\gamma_n^{(j)}$ for large $n$.
\begin{lemma}\label{lem:collar-joint-discs}
 
 If $A \subset \R^d$ is a bounded, connected, regular open set such that $\intt(A^c)$ is connected, then 
 \begin{equation*}
  \sup_{x \in \partial A} \dist(x, \out \dil_{\epsilon}(A)) \to 0 \quad \text{as} \quad \epsilon \to 0.
 \end{equation*}
 
\end{lemma}

\begin{proof}

Let $x \in \partial A$. 
By regularity, $A^c$ is the closure of $\intt(A^c)$ so there exists a sequence $(x_n) \subset \intt(A^c)$ with $x_n \to x$.
We claim that for each $n$, there exists $\epsilon_n > 0$ such that $x_n$ lies in the unbounded connected component of $\dil_\epsilon(A)^c$ for all $ \epsilon \in (0,\epsilon_n]$.
To see this first note that, since $\intt(A^c)$ is connected and $A$ is bounded, there exists an unbounded,
connected open set $V_n$ such that $\overline{V}_n \subset \intt(A^c)$ and $x_n \in V_n$.
The claim follows from the fact that $V_n$ is a subset of $\dil_\epsilon(A)^c$ for small enough $\epsilon$. 

Without loss of generality, assume that $\epsilon_{n+1} < \epsilon_n$ for all $n$.
For each $\epsilon \in (\epsilon_{n-1}, \epsilon_n]$, $x$ lies in $\dil_{\epsilon}(A)$ and $x_n$ lies in the unbounded connected component of $\intt(\dil_{\epsilon}(A)^c)$, so, 
\begin{equation*}
 \delta_x(\epsilon) := \dist (x, \out \dil_{\epsilon}(A)) \leq |x - x_n |. 
\end{equation*}
Since $\epsilon_n\to 0$ monotonically  as $n \to \infty$ and $|x - x_n| \to 0$ as $n \to \infty$, we have $\delta_x(\epsilon) \to 0$ as $\epsilon \to 0$.
$\delta_x(\epsilon) $ is equal to the distance from $x$ to the unbounded component of $\dil_\epsilon(A)^c$.
Since the latter set is nested for decreasing $\epsilon > 0$, $\delta_x(\epsilon)$ in fact tends to zero monotonically as $\epsilon \to 0$.
Finally, $\partial A$ is compact and $\delta_x(\epsilon)$ is continuous in $x$ so an application of Dini's theorem yields
\begin{equation*}
 \delta(\epsilon) := \sup_{x \in \partial A} \delta_x(\epsilon) \to 0 \quad \text{as} \quad \epsilon \to 0.
\end{equation*}
\end{proof}

Next, we prove a couple of useful elementary topological facts. 
\begin{lemma}
  \label{lem:containment}
  If $A,B \subset \R^d$ are such that $B$ is open and connected, $A \cap B \neq \emptyset$ and $\partial A \subset B^c$,
  then $B \subset A$. 
\end{lemma}

\begin{proof}
  Suppose for contradiction that $ A^c \cap B \neq \emptyset$. $B$ is path-connected so
  there exists a path in $B$ from a point in $A \cap B$ to a point in $A^c \cap B$.
  Such a path must intersect $\partial A$ which is the desired contradiction.
\end{proof}

\begin{lemma}
  \label{lem:fill-hole}
  If $A \subset \R^d$ is a connected open set such that $\#_c(A^c) < \infty$,
  then the union of $A$ with any connected component of $A^c$ is open and connected.
\end{lemma}

\begin{proof}
  Let $D$ be any connected component of $A^c$.
  Since $\#_c(A^c) < \infty$, there exists an open neighbourhood $U$ of $D$ such that
  $U$ does not intersect any other connected component of $A^c$.
  Consequently, $U \bs D \subset A$ and so $A \cup D = A \cup U$.
  The lemma follows from the fact that the union of two open, connected sets with nonempty intersection is open and connected.
\end{proof}

In Proposition \ref{prop:large+lonely}, we shall assume that the limit domain $\cO$ is bounded, regular and satisfies $\#_c \intt(\cO^c) = \#_c(\cO^c) < \infty$.
The next lemma collects some properties of domains satisfying these hypotheses.
Intuitively, such a domain has a finite number of holes $D_1,...,D_N$ which do not touch each other
and which do not touch the unbounded exterior of the domain. The set $D_{N+1}$ below is essentially the domain $\cO$ with all the holes filled in.

\begin{lemma}
  \label{lem:seperated-holes}
  Suppose that $\cO \subset \R^d$ is a bounded, connected, regular open set such that $ \#_c \intt(\cO^c) = \#_c(\cO^c) < \infty$.
  Let $D_1,...,D_N \subset \R^d$ denote the bounded connected components of $\intt(\cO^c)$.
  Let $D_{N+1} \subset \R^d$ denote the complement of the unbounded connected component of $\cO^c$.
  Then,
  \begin{enumerate}[label= \rm(\alph*)]
  \item the collection of closed sets $\overline{D}_1, ... ,\overline{D}_N, D_{N+1}^c$ is pairwise disjoint,
  \item $\intt(D_j^c)$ is connected for each $j \in \set{1,...,N+1}$,
  \item $D_j$ is regular for each $j \in \set{1,...,N+1}$ and 
  \item $\partial D_1,...,\partial D_{N+1}$ are the connected components of $\partial \cO$.
  \end{enumerate}
\end{lemma}

\begin{proof}
  Let $E_{N+1} \subset \R^d$ denote the unbounded connected component of $\intt(\cO^c)$, so that
  \begin{equation}\label{eq:int-Oc-decomp}
    \intt(\cO^c) = D_1 \cup \cdots \cup D_N \cup E_{N+1}.
  \end{equation}
  By the regularity of $\cO$ and the fact that the closure of the union of two sets is the union of the closure,
  \begin{equation}\label{eq:Oc-decomp}
    \cO^c = \overline{\intt(\cO^c)} = \overline{D}_1 \cup \cdots \cup \overline{D}_N \cup \overline{E}_{N+1}.
  \end{equation}
  By construction, we have that $\#_c\intt(\cO^c) = N+1$. 
  By the hypothesis $\#_c \intt(\cO^c) = \#_c(\cO^c)$, we must in fact have $\#_c(\cO^c) = N+1$
  and this can only hold if the collection of closed sets $\overline{D}_1,...,\overline{D}_N,\overline{E}_{N+1}$ is exactly the collection of connected components
  of $\cO^c$ and hence must be pairwise disjoint.
  In particular, since $\overline{E}_{N+1}$ is the unique unbounded connected component of $\cO^c$, we must have $D_{N+1} = (\overline{E}_{N+1})^c$,
  proving (a).

  Moving on to the proof of (b), first note that we have the disjoint union
  \begin{equation*}
    \R^d  = \cO \cup \overline{D}_1 \cup \cdots \cup \overline{D}_N \cup D_{N+1}^c
  \end{equation*}
  and so, for any $j \in \set{1,...,N}$,
  \begin{equation}\label{eq:int-Dj-c-formula}
    \intt(D_j^c) = (\overline{D}_j)^c = \cO \cup \br*{ \bigcup_{\substack{ k =1 \\ k \neq j}}^N \overline{D}_k} \cup D_{N+1}^c. 
  \end{equation}
  By $N$ successive applications of Lemma \ref{lem:fill-hole}, we see that the right hand side of (\ref{eq:int-Dj-c-formula})
  is connected, proving (b) for $j \in \set{1,...,N}$. The proof of (b) for $j = N+1$ is immediate since $\intt(D_{N+1}^c) = E_{N+1}$.

  Next, focus on the regularity of $D_j$.
  Since the interior of the union of two disjoint closed sets is the union of the interior of those sets, we have
  \begin{equation}\label{eq:int-Oc-decomp-2}
    \intt(\cO^c) = \intt(\overline{D}_1) \cup \cdots \cup \intt(\overline{D}_N) \cup \intt(\overline{E}_{N+1}).
  \end{equation}
  Combined with (\ref{eq:int-Oc-decomp}) and disjointedness, (\ref{eq:int-Oc-decomp-2}) implies that, for any $j \in \set{1,...,N}$,
  \begin{align*}
    \intt(\overline{D}_j) & = \overline{D}_j \cap \br*{  \intt(\overline{D}_1) \cup \cdots \cup \intt(\overline{D}_N) \cup \intt(\overline{E}_{N+1})}  \\
     & = \overline{D}_j \cap \br*{ D_1 \cup \cdots \cup D_N \cup E_{N+1}} = D_j, 
  \end{align*}
  that is, $D_j$ is regular. $D_{N+1}$ is also regular because $D_{N+1}^c = \overline{E}_{N+1}$ and $E_{N+1}$ is open.
  
The fact that $D_j$ is an open, connected subset of $\R^d$ and $D_j^c$ is connected ensures that $\partial D_j$ is connected
for each $j \in \set{1,...,N+1}$  \cite{czarneckiConnectednessBoundaryComplement2011}.
Then (d) follows from (\ref{eq:Oc-decomp}) and the fact that the collection of closed connected sets $\partial D_1,...,\partial D_{N+1}$
is pairwise disjoint.
\end{proof}

The following lemma, concerning Hausdorff convergence for the boundaries of approximations of an open set from below, follows immediately
from Lemma \ref{lem:met-conv-to-hauss} below. 

\begin{lemma}\label{lem:nested}
If $A \subset \R^d$ and $A_n \subset \R^d, \, n \in \N$, are bounded open sets such that
$A_n \subseteq A_{n+1} \subseteq A $ for all $n \in \N$ and $A = \cup_{n=1}^\infty A_n$, then $\d_H (\partial A_n, \partial A) \to 0 \quad \text{as} \quad n \to \infty$.
\end{lemma}

\begin{figure}[htbp]\label{fig:holes}
	\centering
	\includegraphics[scale=0.9]{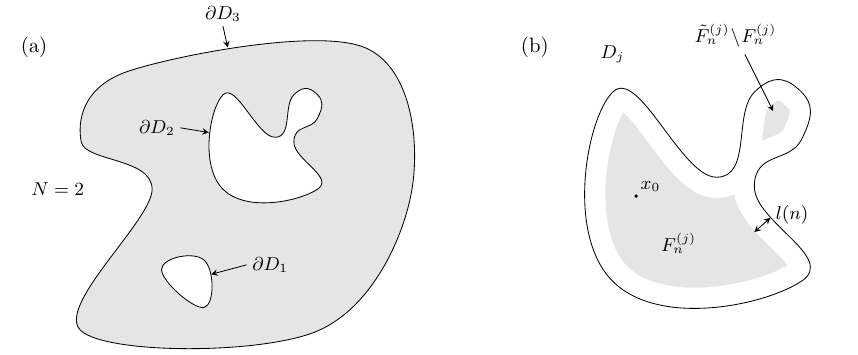}
        \caption{Illustration for the proof of Proposition \ref{prop:large+lonely}}
\end{figure}

\begin{prop}
  \label{prop:large+lonely}
  Suppose that $\cO \subset \R^2$ is a bounded, connected, regular open set  such that $\vol(\partial \cO) = 0$ and $\#_c \intt(\cO^c) = \#_c(\cO^c) < \infty$.
  Suppose that $\cO_n \subset \R^2$, $n \in \N$, is a collection of bounded open sets such that $\partial \cO_n$ is locally connected for all $n \in \N$ and 
  \begin{equation*}
    l(n) = \d_H(\cO_n,\cO) + \d_H(\partial \cO_n,\partial \cO) \to 0 \quad \text{as} \quad n \to \infty.
  \end{equation*}
  Let $D_j \subset \R^2$, $j \in \set{1,...,N+1}$, denote the sets in Lemma \ref{lem:seperated-holes}.
  Then, there exists:
  \begin{itemize}
  \item $n_0 \in \N$,
  \item  a sequence $\epsilon(n) > 0$, $n \geq n_0$, with $\epsilon(n) \to 0$ as $n \to \infty$ and $\epsilon(n) \geq 2 l(n)$
  \item path-connected subsets $\gamma_n^{(j)} \subseteq \partial \cO_n$, $j \in \set{1,...,N+1}$, $n \geq n_0$,
  \end{itemize}
  such that for all  $n \geq n_0$ we have
\begin{equation}
    \label{eq:diam-gamma}
    \diam(\gamma_n^{(j)}) \geq \diam(\partial D_j) - \epsilon(n) \qquad (j \in \set{1,...,N+1})
\end{equation}
and
\begin{equation}
  \label{eq:lonely-are-close}
  \sup_{x \in \partial \cO_n} \dist(x, \gamma_n^{(1)} \cup \cdots \cup \gamma_n^{(N+1)}) \leq \epsilon(n).
\end{equation}
\end{prop}

\begin{proof}
  \textit{Step 1 (Construction of $\gamma_n^{(j)}$).}

 Let
 \begin{equation*}
   \tilde{F}^{(j)}_n := \intt(D_j \bs \col{l(n)}D_j) \qquad (j \in \set{1,...,N+1},\,n \in \N).
 \end{equation*}
 Choose any point $x_0 \in D_j $. Then there exists $n_0 \in \N$ large enough such that  $x_0 \in \tilde{F}_n^{(j)}$ for all $n \geq n_0$.
 For every $j \in \set{1,...N+1}$ and $n \geq n_0$, define $F_n^{(j)}$ as the unique path-connected component of the open set $\tilde{F}_n^{(j)}$ containing the point $x_0$.
 
 $F_n^{(j)}$ is open, bounded, connected and satisfies
 \begin{equation*}
    F_n^{(j)} \subset F_{n+1}^{(j)} \subset  D_j
  \end{equation*}
 for all $n$.
 Furthermore, since any path in $D_j$ from $x_0$ to any point $x \in  D_j$ lies in $\tilde{F}_n^{(j)}$ for all large enough $n$,
 we have $x \in F_n^{(j)}$ for all large enough $n$ and so
 \begin{equation*}
   \bigcup_{n=1}^\infty F_n^{(j)} =  D_j.
 \end{equation*}
 By Lemma \ref{lem:nested}, we have
 \begin{equation}\label{eq:hauss-for-Fn}
   \epsilon_1^{(j)}(n) := \d_H(\partial F^{(j)}_n, \partial D_j ) \to 0 \quad \text{as} \quad n \to \infty \qquad (j \in \set{1,...,N+1}, n \geq n_0).
 \end{equation}
 
 Let us now focus on the case $j \in \set{1,...,N}$.
 The definitions of $F_n^{(j)}$ and $l(n)$ ensure that $\partial \cO_n$ does not intersect $F_n^{(j)}$ and $F_n^{(j)} \cap \cO_n^c \neq \emptyset$ hence
 $F_n^{(j)} \subset \cO_n^c$ by Lemma \ref{lem:containment}.
 Define $(\cO_n^c)_j$ as the unique connected component of $\cO_n^c$ such that the connected set $F_n^{(j)}$ is contained in $(\cO_n^c)_j$.

 By Lemma \ref{lem:seperated-holes} (a), we can ensure that $n_0$ is large enough so that the collection of open sets
  \begin{equation*}
    \dil_{2l(n)}(D_1),...,\dil_{2l(n)}(D_N), \dil_{2l(n)}(D_{N+1}^c)
  \end{equation*}
 is pairwise disjoint for $n \geq n_0$.
 Then, for $n \geq n_0$, every point in $\partial \dil_{2l(n)}(D_j)$ lies in $\cO$ at a distance $\geq 2 l(n)$ from $\partial \cO$ so
\begin{equation*}
   \partial \dil_{2l(n)}(D_j) \subset \cO_n \subset (\cO_n^c)_j^c. 
 \end{equation*}
 Since also $(\cO^c_n)_j \cap \dil_{2l(n)}(D_j) \neq \emptyset$, an application of Lemma \ref{lem:containment} yields
 \begin{equation}
   \label{eq:Oncj-containment}
  (\cO_n^c)_j \subset \dil_{2l(n)}(D_j).
 \end{equation}
for all $n \geq n_0$.
 
 Consequently, $(\cO_n^c)_j$ is bounded for $n \geq n_0$ and we
 can use the notion of outer boundary (cf. Definition \ref{de:outer}) to make the definition 
 \begin{equation}
   \label{eq:gam_n^j}
    \gamma_n^{(j)} := \out (\cO_n^c)_j \qquad (j \in \set{1,...,N}, \, n \geq n_0). 
 \end{equation}
By Lemma \ref{lem:outer-connected}, $\gamma_n^{(j)}$ is path-connected.
 Since $F_n^{(j)} \subset (\cO_n^c)_j$ for all $n \geq n_0$, we have
 \begin{equation}
   \label{eq:diam-gamma_n^j}
   \diam(\gamma_n^{(j)}) \geq \diam(\partial F_n^{(j)}) \geq \diam(\partial D_j) - 2 \epsilon_1^{(j)}(n)
 \end{equation}
 where the final inequality holds by (\ref{eq:hauss-for-Fn}).

 The construction of $\gamma_n^{(N+1)}$ is very similar. In this case, for every $n \geq n_0$, $F_n^{(N+1)}$ is contained inside $\cO_n$
 and we define $\cO_{n,0}\subset \R^d$ as the unique connected component of $\cO_n$ containing $F_n^{(N+1)}$ .
 $\cO_{n,0}$ is bounded because $\cO_n$ is bounded hence we can make the definition 
 \begin{equation}
   \label{eq:gam_n^Np1}
   \gamma_n^{(N+1)} := \out \cO_{n,0} \qquad (n \geq n_0).
 \end{equation}
 By Lemma \ref{lem:outer-connected}, $\gamma_n^{(N+1)}$ is path-connected.
 We have $F_{n}^{(N+1)} \subset \cO_{n,0}$ so (\ref{eq:diam-gamma_n^j}) holds for $j = N+1$ and $n \geq n_0$.

 \noindent
 \textit{Step 2 (Properties of $\gamma_n^{(j)}$).}

 Let 
 \begin{equation}
   \label{eq:eps_2^j}
   \epsilon_2^{(j)}(n) := \sup_{x \in \partial D_j} \dist(x, \out \dil_{2 l(n)}(D_j)) \qquad (j \in \set{1,...,N+1},n \in \N). 
 \end{equation}
 For each $j \in \set{1,...,N+1}$, $D_j$ satisfies the hypotheses of Lemma \ref{lem:collar-joint-discs}
 by Lemma \ref{lem:seperated-holes}, hence $\epsilon_2^{(j)}(n) \to 0$ as $n \to \infty$.

 We claim that
 \begin{equation}
   \label{eq:lonely-close-local}
   \sup_{x \in \partial D_j} \dist(x, \gamma_n^{(j)}) \leq \max \set{\epsilon_1^{(j)}(n),\epsilon_2^{(j)}(n)}
 \end{equation}
 for each $j \in \set{1,...,N+1}$ and large enough $n$.
 Fix $x \in \partial D_j$.
 By the definition of $\epsilon_1^{(j)}(n)$, there exists $y_1 \in \partial F_n^{(j)}$ such that $|y_1 - x| \leq \epsilon_1^{(j)}(n)$.
 By the definition of $\epsilon_2^{(j)}(n)$, there exists $y_2 \in \out \dil_{2l(n)}(D_j)$ such that $|y_2 - x| \leq \epsilon_2^{(j)}(n)$.
 
 Focus first on the case $j \in \set{1,...,N}$.
 By (\ref{eq:Oncj-containment})  and the fact that  $y_2$ lies in the unbounded connected component of $\dil_{2l(n)}(D_j)^c$,
  $y_2$ lies in the unbounded connected component of the complement of $(\cO_n^c)_j$ for all $n \geq n_0$. 
 In addition, we have that $y_1 \in (\cO_n^c)_j$.
 Consequently, the path $\gamma$ consisting the union of a straight line from $y_1$ to $x$
 and a straight line from $x$ to $y_2$ must intersect $\gamma_n^{(j)} = \out (\cO_n^c)_j$.
 Inequality (\ref{eq:lonely-close-local}) for $j \in \set{1,...,N}$ follows from the fact that every point $y$ in
 the path $\gamma$ satisfies $|y - x| \leq \max \set{\epsilon_1^{(j)}(n),\epsilon_2^{(j)}(n)}$.

 The proof of (\ref{eq:lonely-close-local}) for $j = N+1$ is very similar.  $y_1$ lies in  $\cO_{n,0}$ and $y_2$ lies in
 the unbounded connected component of $(\cO_{n,0})^c$ so the path consisting of the union of a straight line from $y_1$ to $x$ and
 a straight line from $x$ to $y_2$ intersects $\gamma_n^{(N+1)} = \out \cO_{n,0}$.

Let
 \begin{equation}
   \label{eq:epsilon-n}
   \epsilon(n) := 2 \max \set{\epsilon_1^{(1)}(n),...,\epsilon_1^{(N+1)}(n),\epsilon_2^{(1)}(n),...,\epsilon_2^{(N+1)}(n),l(n)} \qquad (n \geq n_0).
\end{equation} 
Then (\ref{eq:diam-gamma}) is satisfied since (\ref{eq:diam-gamma_n^j}) holds for all $j \in \set{1,...,N+1}$
so it remains to prove (\ref{eq:lonely-are-close}).
But (\ref{eq:lonely-are-close}) follows from (\ref{eq:lonely-close-local}) by the observation that, for large enough $n$,
\begin{equation*}
  \sup_{x \in \partial \cO_n} \dist(x, \gamma_n^{(1)} \cup \cdots \cup \gamma_n^{(N+1)}) \leq l(n) + \sup_{x \in \partial \cO} \dist(x, \gamma_n^{(1)} \cup \cdots \cup \gamma_n^{(N+1)})
\end{equation*}
and, using Lemma \ref{lem:seperated-holes} (d),
\begin{align*}
  \sup_{x \in \partial \cO} \dist(x, \gamma_n^{(1)} \cup \cdots \cup  \gamma_n^{(N+1)}) &
                                                                                        \leq \max \set{ \sup_{x \in \partial D_1}\dist(x,\gamma_n^{(1)}),...,  \sup_{x \in \partial D_{N+1}}\dist(x,\gamma_n^{(N+1)}) } \\
  & \leq \max \set{\epsilon_1^{(1)}(n),...,\epsilon_1^{(N+1)}(n),\epsilon_2^{(1)}(n),...,\epsilon_2^{(N+1)}(n)} \leq \epsilon(n).
\end{align*}

\end{proof}

\subsection{Proof of Theorem \ref{th:mosco}}\label{sec:mosco-proof}

Firstly, $\cO$ and $\cO_n$, $n \in \N$, satisfy the hypotheses of Proposition \ref{prop:large+lonely}.
  Let $\epsilon(n), \,D_j$, $N$ and $\gamma_n^{(j)}$ be as in that proposition.
  $\epsilon(n)$ satisfies $\epsilon(n) \geq 2 l(n)$ and $\epsilon(n) \to 0$ as $n \to \infty$ so by Proposition \ref{prop:mosco}
  it suffices to show that there exist numerical constants $C, \alpha > 0$ such that for large enough $n$,
  \begin{equation}
    \label{eq:poin-for-mosco-th}
    \forall \, u \in H^1_0(\cO) : \quad \norm{u}_{L^2(\col{\epsilon(n)} \cO)} \leq C \epsilon(n) \norm{\nabla u}_{L^2(\col{\alpha \epsilon(n)} \cO)}
  \end{equation}
  and
  \begin{equation}
    \label{eq:uniform-poin-for-mosco-th}
    \forall \, u \in H^1_0(\cO_n) : \quad \norm{u}_{L^2(\col{\epsilon(n)} \cO_n)} \leq C \epsilon(n) \norm{\nabla u}_{L^2(\col{\alpha \epsilon(n)} \cO_n)}.
  \end{equation}
  (\ref{eq:poin-for-mosco-th}) follows immediately from Theorem \ref{thm:poin} (with $C=10 \sqrt{3}$ and $\alpha= 2\sqrt{2}$)
  so it remains to show (\ref{eq:uniform-poin-for-mosco-th}). 

  Let 
  \begin{equation}
    \mathcal{V}_n := \br*{\bigcup_{j=1}^{N+1} \gamma_n^{(j)}}^c \qquad (n \in \N).
  \end{equation}
  By inequality (\ref{eq:lonely-are-close}) in Proposition \ref{prop:large+lonely}, every point in $\partial \cO_n$
  is at most a distance $\epsilon(n)$ from $\partial \mathcal{V}_n = \cup_{j=1}^{N+1} \gamma_n^{(j)}$ so
  \begin{equation}
    \col{\epsilon(n)} \cO_n \subseteq \dil_{2 \epsilon(n)} (\partial \mathcal{V}_n) = \col{2 \epsilon(n)} \mathcal{V}_n.
  \end{equation}
  Inequality (\ref{eq:diam-gamma}) yields
  \begin{equation}
    Q(\partial \mathcal{V}_n) \geq \min \set*{ \diam (\partial D_j) - \epsilon(n): j \in \set{1,...,N+1}}.
  \end{equation}
  Since $D_j$ are bounded open sets, we have $\diam(\partial D_j) > 0$ and so  $4 \sqrt{2} \epsilon(n) < Q(\partial \mathcal{V}_n)$ for large enough $n$.
  Consequently, an application of Theorem \ref{thm:poin} shows that
  \begin{equation}
    \norm{u}_{L^2(\col{\epsilon(n)} \cO_n)}  \leq \norm{u}_{L^2(\col{2 \epsilon(n)} \mathcal{V}_n)}\leq C\epsilon(n) \norm{\nabla u}_{L^2(\col{4 \sqrt{2} \epsilon(n)} \mathcal{V}_n)}
  \end{equation}
  for all large enough $n$ and all $u \in H^1_0(\cO_n)$.
  Noting that any $u \in H^1_0(\cO_n)$ must vanish almost everywhere on $\cO_n^c$, we see that we have established (\ref{eq:uniform-poin-for-mosco-th})
  (with $C=20 \sqrt{3}$ and $\alpha = 4\sqrt{2}$),
  completing the proof.

\begin{remark}\label{rem:poin-not-regular}
  Recall that the explicit Poincar\'e-type inequality of Theorem \ref{thm:poin} does not require regularity of the domain.
  Interestingly, this is exploited in the proof of Theorem \ref{th:mosco}. There, Theorem \ref{thm:poin} is applied to $u \in H^1_0(\cO_n)$
  as a function in $H^1_0(\mathcal{V}_n)$, and $\mathcal{V}_n$ is certainly not regular in general.
\end{remark}

\begin{remark}\label{rem:Q-hypothesis-replacement}
  We can replace the hypothesis $Q(\partial \cO) > 0$ with the hypothesis that the connected components
  of $\partial \cO$ are path-connected.
  Indeed, with this replacement, the path-connected components of $\partial \cO$ are $\partial D_1,...,\partial D_{N+1}$,
  where $D_1,...,D_{N+1}$ are the bounded, open sets of Lemma \ref{lem:seperated-holes}, hence
  \begin{equation}
    Q(\partial \cO) = \min\set{ \diam (\partial D_j) : j \in \set{1,...,N+1} } > 0. 
  \end{equation}
\end{remark}

\subsection{Hausdorff convergence for pixelated domains}\label{sec:hausd-conv-pixel}

We finish the section by showing that pixelation approximations (cf. Definition \ref{defn:pixel})
satisfy the Hausdorff convergence condition of Theorem \ref{th:mosco} (under suitable hypotheses).
From this, we will be able to conclude that the pixelation approximations converge in the Mosco sense, which will be utilised in the study of
computational spectral problems in Section \ref{sec:spectral}.

\begin{lemma}\label{lem:met-conv-to-hauss}
  If $A \subset \R^d$ and $A_n \subset \R^d$, $n \in \N$, are bounded open sets such that $A_n \subset A$ for all $n \in \N$
  and any compact set $F \subset A$ is a subset of $A_n$ for all large enough $n$, then
  \begin{equation*}
    \d_H(A,A_n) + \d_H(\partial A, \partial A_n) \to 0 \quad \text{as} \quad n \to \infty. 
  \end{equation*}
\end{lemma}

\begin{proof}

We have that $\d_H(\partial A_n, \partial A) = \max \set{D_1,D_2}$, where 
\begin{equation*}
 D_1 := \sup_{x \in \partial A} \dist(x, \partial A_n) \quad \text{and} \quad D_2 := \sup_{x \in \partial A_n} \dist(x, \partial A).
\end{equation*}
Focusing on $D_1$, let $\epsilon > 0$ and $x \in \partial A$.
By hypothesis, we can let $N( \epsilon,x) \in  \N$ be large enough so that $B_\epsilon(x) \cap A_n \neq \emptyset$ for all $n \geq N( \epsilon,x)$.
The ball $B_\epsilon(x)$ also intersects $A^c \subseteq A_n^c$ for all $n$ so in fact $B_\epsilon(x)$
intersects $\partial A_n$ for all $n \geq N(\epsilon, x)$. 
This shows that $\dist(x, \partial A_n)< \epsilon$ for all $n \geq N(\epsilon ,x)$.
  By compactness of $\partial A$, we can let $N(\epsilon) := \sup_{x \in \partial A} N(\epsilon, x)< \infty$.
  Then,
\begin{equation*}
 \forall n \geq N(\epsilon): \sup_{x \in \partial A} \dist(x, \partial A_n) < \epsilon
\end{equation*}
hence  $D_1 \to 0$ as $n \to \infty$.

Focusing on $D_2$, suppose for contradiction that there exists a subsequence $(\partial A_{n_k})_{k \in \N}$ such that 
\begin{equation*}
 \sup_{x \in \partial A_{n_k}} \dist (x , \partial A) \geq C
\end{equation*}
for some $C > 0$ independent of $k$.
Then there exists $x_{n_k} \in \partial A_{n_k}$, $k \in \N$, such that $\dist(x_{n_k}, \partial A) \geq C$. 
$B_{C/2}(x_{n_k})$ is contained in $A$ and intersects $A_{n_k}^c$ for all $k$ so there exists $y_{n_k} \in B_{C/2}(x_{n_k})$, $k \in \N$, such that $y_{n_k} \in A_{n_k}^c \cap A$ for all $k$.
$(y_{n_k})$ satisfies $\dist(y_{n_k}, \partial A) \geq C/ 2> 0$ for all $k$.
Let $y \in  \overline{A}$ be an accumulation point of $(y_{n_k})$.
 $y$ must satisfy $\dist(y, \partial A) \geq C/2$ so there exists $\delta > 0$ such that $\overline{B}_{\delta}(y) \subset \intt(A)=A$. 
By hypothesis,  $B_\delta(y) \subset A_n$ for all large enough $n$.
But this is a contradiction to fact that $y$ is an accumulation point of $y_{n_k} \in A_{n_k}^c$, $k \in \N$. 
It follows that $D_2 \to 0$ as $n \to \infty$ hence $\d_H(\partial A_n, \partial A) \to 0$ as $n \to \infty$.

Since $A_n \subseteq A$, it remains to show that $\sup_{x \in A} \dist(x, A_n) \to 0$.
Let $\epsilon > 0$. By hypothesis, there exists $N \in \N$ such that $A \bs \col{\epsilon} A \subset A_n$, hence
$\sup_{x \in A} \dist(x, A_n) \leq \epsilon $, for all $n \geq N$, completing the proof.
\end{proof}

In the next proposition, the hypothesis that the limit domain $\cO$ is regular is crucial.
Indeed, Proposition \ref{prop:counterexample} features an example of a non-regular domain for which the pixelation approximations
do not converge in the Hausdorff sense.
The basic idea of the proof is to introduce approximations from below $\tilde{A}_n$ and $E_n$ for the sets $\cO$ and $\intt(\cO^c)$
respectively which ``sandwich'' the boundary $\partial \cO_n$ of the pixelated domain.
 
\begin{prop}\label{prop:pixel}
If $\cO \subseteq \R^d $ is a bounded, regular open set such that $\vol(\partial \cO) = 0$, and $\cO_n,\,n \in \N$, are the pixelated domains for $\cO$, then 
\begin{equation*}
 l(n) = \d_H(\cO, \cO_n) + \d_H(\partial \cO, \partial \cO_n) \to 0 \quad \text{as} \quad n \to \infty.
\end{equation*}

\end{prop}

\begin{proof}

All limits in the proof are as $n \to \infty$.

Define the following collection of open sets 
\begin{equation}
 \mathcal{B} := \bigcup_{n \in \N} \mathcal{B}_n \quad \text{where} \quad \mathcal{B}_n := \set*{j + (- \tfrac{1}{n}, \tfrac{1}{n})^d: j \in \Z^d_n}. 
\end{equation}
The elements of $\mathcal{B}_n$ are open boxes of side-length $2/n$ and hence overlap.
Let $A_n$ denote the union of all elements of $\mathcal{B}_n$ which are subsets $\cO$ and
let $E_n$ denote the union of all elements of $\mathcal{B}_n$ which are subsets of
$\intt(\cO^c)$.

We claim that for any compact set $F \subset \cO$, we have $F \subset A_n$ for all large enough $n$.
Let $\epsilon > 0$ be small enough so that $\dil_\epsilon(F) \subset \cO$ and let $n$ be any positive integer which is large enough
such that $ 2\tfrac{\sqrt{d}}{n} < \epsilon$.
Let $x \in F$. There exists $j \in \Z_n^d$ such that $|x - j| \leq \tfrac{\sqrt{d}}{2} \tfrac{1}{n}$.
Then,
\begin{equation}
  j + (-\tfrac{1}{n}, \tfrac{1}{n})^d \subset \dil_{2\sqrt{d}/n}(F) \subset \dil_{\epsilon}(F) \subset \cO
\end{equation}
so the box $j + (-\tfrac{1}{n},\tfrac{1}{n})^d$ is a subset of $A_n$ and consequently $x \in A_n$.
It follows that $F \subset A_n $, proving the claim.

We also have $A_n \subset \cO$ so $A_n$ and $\cO$ satisfy the hypotheses of Lemma \ref{lem:met-conv-to-hauss}.
We similarly have $E_n \subset \intt(\cO^c)$ and, for any compact set $F \subset \intt(\cO^c)$, $F \subset E_n$ for all large enough $n$. 
Applying Lemma \ref{lem:met-conv-to-hauss} to $ A_n$ and $B_X(0) \cap E_n$ for large enough $X > 0$, we obtain
\begin{equation}\label{eq:pixel-dH-dAn-dO}  
 \d_H(\cO, A_n) + \d_H(\partial \cO,\partial A_n) \to 0 \quad \text{and} \quad \d_H(B_X(0)\cap \intt(\cO^c), B_X(0) \cap E_n) + \d_H(\partial \cO,\partial E_n) \to 0,
\end{equation}
where regularity was used in the second limit to ensure that $\partial \intt(\cO^c) = \partial \cO$.

Define also the following subset of $A_n$,
\begin{equation}
  \label{eq:An-tilde-defn}
  \tilde{A}_n := \intt \br*{ \bigcup_{j \in A_n \cap \Z_n^d}(j + [- \tfrac{1}{2n}, \tfrac{1}{2n}]^d) } \qquad (n \in \N).
\end{equation}
Any point in $A_n$ is in a box $j + (-\tfrac{1}{n},\tfrac{1}{n})^d$  for some $j \in A_n \cap \Z_n^d$ hence at most a distance $\sqrt{d}/n$ from a point in $\tilde{A}_n$. Consequently
\begin{equation}
  \label{eq:dH-An-An-tilde}
  \d_H(A_n,\tilde{A}_n) \leq \frac{\sqrt{d}}{n}.
\end{equation}
\begin{figure}[b]
	\centering
%
%

\begin{tikzpicture}[>=stealth, scale=0.6]
	\begin{scope}
	\clip(-1.04,-1.1) rectangle (7,7);
	\filldraw[fill=gray, fill opacity=0.2] plot [smooth, tension=0.1] coordinates {(5.1,0)
		(5.3192,0.49721)
		(5.1635,1.5522)
		(4.527,2.1218)
		(4.3176,2.9303)
		(4.2272,3.1484)
		(3.6278,2.7389)
		(3.4583,3.6534)
		(2.3859,4.4292)
		(2.1654,4.6494)
		(1.5091,5.3821)
		(1.2156,5.3578)
		(-0.24339,5.1465)
		(-0.47989,5.0017)
		(-1.0966,4.4101)
		(-2.4294,5.2513)
		(-1.9383,4.4732)
		(-2.0178,3.6371)
		(-3.4312,3.8314)
		(-3.7211,3.272)
		(-4.3298,2.5566)
		(-4.3895,2.3173)
		(-4.8592,1.3633)
		(-4.9383,0.55139)
		(-4.5043,-0.20594)
		(-5.2439,-0.42949)
		(-5.0806,-1.1439)
		(-4.7681,-1.4435)
		(-4.5446,-2.6202)
		(-3.7586,-2.7901)
		(-3.1979,-3.6927)
		(-3.655,-3.1585)
		(-3.0529,-4.136)
		(-2.1565,-4.0651)
		(-1.841,-4.5439)
		(-0.51475,-4.6021)
		(0.010646,-5.3232)
		(0.32796,-4.6202)
		(0.96709,-4.5001)
		(1.3211,-5.4164)
		(2.253,-4.625)
		(2.9302,-4.4713)
		(3.1599,-3.8622)
		(4.1702,-3.7335)
		(4.2083,-2.9068)
		(4.3236,-2.2745)
		(4.7459,-1.8029)
		(5.1655,-1.3038)
		(4.8395,-1.0262)
		(5.1,0)};
	\foreach \x in {-1,0,1,2,3,4,5,6}
    \foreach \y in {-1,0,1,2,3,4,5,6}
    {
    \fill (\x,\y) circle (0.04);
    }
	\fill[fill=blue, opacity=0.25] (-2,-2) rectangle (3,4);
	\fill[fill=blue, opacity=0.25] (3,-2) rectangle (4,2);
	
	\draw (0.5,0.5) node{$A_n$};
	\draw (2.65,4.65) node{\footnotesize$\partial \mathcal O$};
	\end{scope}

	\begin{scope}[shift={(9,0)}]
	\clip(-1.1,-1.1) rectangle (7,7);
	\foreach \x in {-1,0,1,2,3,4,5,6}
    \foreach \y in {-1,0,1,2,3,4,5,6}
    {
    \fill (\x,\y) circle (0.04);
    }
	\filldraw[fill=gray, fill opacity=0.2] plot [smooth, tension=0.1] coordinates {(5.1,0)
		(5.3192,0.49721)
		(5.1635,1.5522)
		(4.527,2.1218)
		(4.3176,2.9303)
		(4.2272,3.1484)
		(3.6278,2.7389)
		(3.4583,3.6534)
		(2.3859,4.4292)
		(2.1654,4.6494)
		(1.5091,5.3821)
		(1.2156,5.3578)
		(-0.24339,5.1465)
		(-0.47989,5.0017)
		(-1.0966,4.4101)
		(-2.4294,5.2513)
		(-1.9383,4.4732)
		(-2.0178,3.6371)
		(-3.4312,3.8314)
		(-3.7211,3.272)
		(-4.3298,2.5566)
		(-4.3895,2.3173)
		(-4.8592,1.3633)
		(-4.9383,0.55139)
		(-4.5043,-0.20594)
		(-5.2439,-0.42949)
		(-5.0806,-1.1439)
		(-4.7681,-1.4435)
		(-4.5446,-2.6202)
		(-3.7586,-2.7901)
		(-3.1979,-3.6927)
		(-3.655,-3.1585)
		(-3.0529,-4.136)
		(-2.1565,-4.0651)
		(-1.841,-4.5439)
		(-0.51475,-4.6021)
		(0.010646,-5.3232)
		(0.32796,-4.6202)
		(0.96709,-4.5001)
		(1.3211,-5.4164)
		(2.253,-4.625)
		(2.9302,-4.4713)
		(3.1599,-3.8622)
		(4.1702,-3.7335)
		(4.2083,-2.9068)
		(4.3236,-2.2745)
		(4.7459,-1.8029)
		(5.1655,-1.3038)
		(4.8395,-1.0262)
		(5.1,0)};
	\fill[fill=blue, opacity=0.25] (-2,-2) rectangle (3.5,3.5);
	\fill[fill=blue, opacity=0.25] (3.5,-2) rectangle (4.5,2.5);
	\fill[fill=blue, opacity=0.25] (4.5,-0.5) rectangle (5.5,1.5);
	\fill[fill=blue, opacity=0.25] (-2,3.5) rectangle (2.5,4.5);
	\fill[fill=blue, opacity=0.25] (-0.5,4.5) rectangle (1.5,5.5);
	
	\draw (0.5,0.5) node{$\mathcal O_n$};
	\draw (2.8,4.6) node{\footnotesize$\partial \mathcal O$};
	\end{scope}

	\begin{scope}[shift={(17,-1)}]
	\clip(-0.1,-0.1) rectangle (7.1,8);
	\foreach \x in {-1,0,1,2,3,4,5,6,7}
    \foreach \y in {-1,0,1,2,3,4,5,6,7}
    {
    \fill (\x,\y) circle (0.04);
    }
	\filldraw[fill=gray, fill opacity=0.2] plot [smooth, tension=0.1] coordinates {(5.1,0)
		(5.3192,0.49721)
		(5.1635,1.5522)
		(4.527,2.1218)
		(4.3176,2.9303)
		(4.2272,3.1484)
		(3.6278,2.7389)
		(3.4583,3.6534)
		(2.3859,4.4292)
		(2.1654,4.6494)
		(1.5091,5.3821)
		(1.2156,5.3578)
		(-0.24339,5.1465)
		(-0.47989,5.0017)
		(-1.0966,4.4101)
		(-2.4294,5.2513)
		(-1.9383,4.4732)
		(-2.0178,3.6371)
		(-3.4312,3.8314)
		(-3.7211,3.272)
		(-4.3298,2.5566)
		(-4.3895,2.3173)
		(-4.8592,1.3633)
		(-4.9383,0.55139)
		(-4.5043,-0.20594)
		(-5.2439,-0.42949)
		(-5.0806,-1.1439)
		(-4.7681,-1.4435)
		(-4.5446,-2.6202)
		(-3.7586,-2.7901)
		(-3.1979,-3.6927)
		(-3.655,-3.1585)
		(-3.0529,-4.136)
		(-2.1565,-4.0651)
		(-1.841,-4.5439)
		(-0.51475,-4.6021)
		(0.010646,-5.3232)
		(0.32796,-4.6202)
		(0.96709,-4.5001)
		(1.3211,-5.4164)
		(2.253,-4.625)
		(2.9302,-4.4713)
		(3.1599,-3.8622)
		(4.1702,-3.7335)
		(4.2083,-2.9068)
		(4.3236,-2.2745)
		(4.7459,-1.8029)
		(5.1655,-1.3038)
		(4.8395,-1.0262)
		(5.1,0)};
	\fill[fill=blue, opacity=0.25] (3,4) rectangle (8,7);
	\fill[fill=blue, opacity=0.25] (5,2) rectangle (8,4);
	\fill[fill=blue, opacity=0.25] (6,-1) rectangle (8,2);
	\fill[fill=blue, opacity=0.25] (0,6) rectangle (3,7);
	\fill[fill=blue, opacity=0.25] (2,5) rectangle (3,6);
	
	\draw (5.5,5.5) node{$E_n$};
	\draw (2.65,4.65) node{\footnotesize$\partial \mathcal O$};
	\end{scope}

\end{tikzpicture}

	\caption{Sketch of the domains $A_n$ (left), $\cO_n$ (centre) and $E_n$ (right).}
\end{figure}
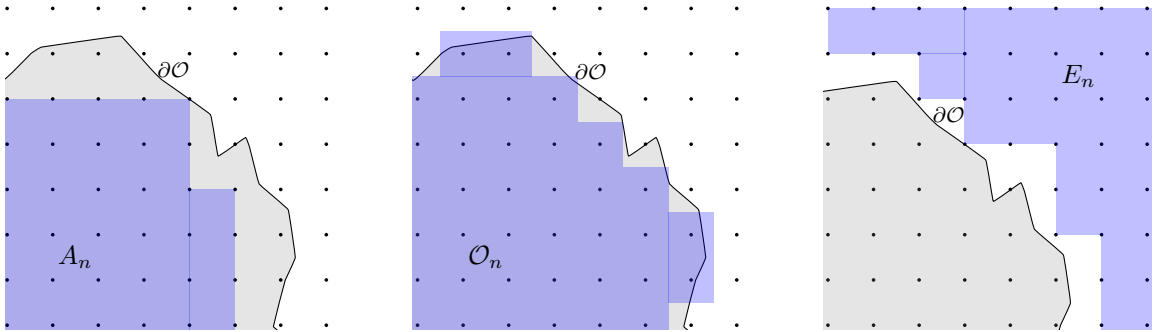
Firstly, we claim that 
\begin{equation}\label{eqpr:pixel-1}
   \tilde{A}_n \subseteq \cO_n \subseteq E^c_n.
\end{equation}
To see the first inclusion in \eqref{eqpr:pixel-1}, note that any grid point $j \in A_n \cap \Z_n^d$ is in $\cO$ so the 
the corresponding cell  $j + [- 1/(2n),1/(2n)]^d$ is a subset of $\overline{\cO}_n$.
Focus now on the second inclusion.
Any point in $x$ in $E_n$ lies in $j + (\tfrac{1}{n},\tfrac{1}{n})^d$ for some $j \in \Z_n^d \cap E_n$.
Since the corners of the closed box $j + [- \tfrac{1}{n}, \tfrac{1}{n}]^d$ lie in $\Z_n^d \cap \overline{E}_n$,
$x$ lies in $j' + [-\tfrac{1}{2n}, \tfrac{1}{2n}]^d$ for some $j' \in \Z_n^d \cap \overline{E}_n$.
This shows that
\begin{equation*}
   E_n \subseteq \bigcup_{j \in \overline{E}_n \cap \Z_n^d} \big(j + [-\tfrac{1}{2n}, \tfrac{1}{2n}]^d\big).
 \end{equation*}
The fact that any point in $\Z_n^d \cap \overline{E}_n$ lies in $\cO^c$ implies that $E_n \subseteq \cO_n^c$, proving the claim.
 
Secondly, we claim that
\begin{equation}\label{eqpr:pixel-2}
    \vol(E^c_n \bs A_n) \to 0.
  \end{equation}
Let $X > 0$ be large enough so that $E_n^c \bs A_n \subset B_X(0)$.
Then,
\begin{equation}
  \label{eq:vol-Bnc-An-decomp}
 \vol(E^c_n \bs A_n) = \vol(B_X(0)) - \vol(B_X(0) \cap A_n) - \vol(B_X(0) \cap E_n).
\end{equation}
Using continuity of measures from below, the hypothesis that $\vol(\partial \cO) = 0$ and regularity, we have
\begin{equation*}
 \vol (B_X(0) \cap A_n) \to \vol(B_X(0) \cap \cO)
\end{equation*}
and
\begin{equation*}
  \vol (B_X(0) \cap E_n) \to \vol(B_X(0) \cap \intt(\cO^c)) = \vol(B_X(0) \cap \cO^c).
\end{equation*}
It follows that the right hand side of (\ref{eq:vol-Bnc-An-decomp}) tends to zero, proving the claim.

Next we claim that 
\begin{equation}\label{eqpr:pixel-3}
 \sup_{x \in \partial \cO_n} \dist(x, \partial A_n \cup \partial E_n) \to 0. 
\end{equation}
This can be seen by considering an expanding ball around any point in $E^c_n \bs A_n$.
More precisely, for any $x \in E^c_n \bs A_n$, define the quantity
\begin{equation*}
 r(x) := \inf \set{ r > 0 :  B_r(x) \cap ( \partial A_n \cup \partial E_n ) \neq \emptyset } \in [0,\infty).
\end{equation*}
For all $x \in E_n^c \bs A_n$, we have 
\begin{equation*}
 \dist(x, \partial A_n \cup \partial E_n) = r(x) \quad \text{and} \quad B_{r(x)}(x) \subseteq E_n^c \bs A_n.
\end{equation*}
Note that we consider that $B_0(x) = x$.
The set $E^c_n \bs A_n$ is compact so we can define
\begin{equation*}
 r_n := \sup_{x \in E_n^c \bs A_n} r(x) < \infty
\end{equation*}
and there exists $x_n \in E_n^c \bs A_n $ such that $r_n = r(x_n)$.  
The limit \eqref{eqpr:pixel-2}, combined with the fact that $B_{r_n}(x_n) \subseteq E_n^c \bs A_n$, yields 
\begin{equation*}
 \vol(B_{r_n}(x_n)) \leq \vol(E^c_n \bs A_n) \to 0
\end{equation*}
which implies that $r_n \to 0$.
\eqref{eqpr:pixel-3} is obtained by applying the inclusions \eqref{eqpr:pixel-1} to get
\begin{align*}
 \sup_{x \in \partial \cO_n} \dist(x, \partial A_n \cup \partial E_n) & \leq \sup_{x \in E_n^c \bs \tilde{A}_n} \dist (x , \partial A_n \cup \partial E_n) \\
 & \leq \max \set*{ \frac{\sqrt{d}}{ n} ,  \sup_{x \in E_n^c \bs A_n} \dist (x , \partial A_n \cup \partial E_n) } \\
 & = \max \set*{ \frac{\sqrt{d}}{ n} , r_n} \to 0, 
\end{align*}
where the second inequality holds because any point in $A_n \bs \tilde{A}_n$ is at most a distance $ \sqrt{d} / n$ from $\partial A_n$.

Combining the two limits in (\ref{eq:pixel-dH-dAn-dO}) gives
\begin{equation}\label{eqpr:pixel-5}
  \d_H(\partial \cO, \partial A_n \cup \partial E_n) \to 0
\end{equation}
and
\begin{equation}
  \label{eqpr:pixel-6}
  \d_H(\partial A_n,\partial E_n) \to 0.
\end{equation}   

Furthermore, we claim that 
\begin{equation}\label{eqpr:pixel-7}
 \sup_{x \in \partial A_n \cup \partial E_n } \dist(x, \partial \cO_n) \to 0.
\end{equation}
This can be seen by considering length minimising lines between $\partial\tilde{A}_n$ and $\partial E_n$. 
More precisely, let $x \in \partial E_n $ and let $y = y(x) \in \partial \tilde{A}_n$ be such that $|x-y| = \dist(x, \partial \tilde{A}_n)$. 
The straight line connecting $x$ and $y$ must intersect $\partial \cO_n$ since one end is in $\cO_n$  and the other is in $\cO_n^c$. 
This fact implies that 
\begin{align*}
 \sup_{x \in \partial E_n} \dist(x, \partial \cO_n) & \leq \sup_{x \in \partial E_n} \dist(x, \partial \tilde{A}_n)\\
 & \leq \sup_{x \in \partial E_n} \dist(x, \partial A_n) + \d_H(\partial A_n, \partial \tilde{A}_n) \to 0
\end{align*}
where the limit holds by \eqref{eqpr:pixel-6} and (\ref{eq:dH-An-An-tilde}).
It can be similarly seen that 
\begin{equation*}
 \sup_{x \in \partial A_n} \dist(x, \partial \cO_n) \to 0.
\end{equation*}
giving us \eqref{eqpr:pixel-7}.
The limits \eqref{eqpr:pixel-3} and \eqref{eqpr:pixel-7} prove that $\d_H(\partial \cO_n, \partial A_n \cup \partial E_n) \to 0$ which,
combined with \eqref{eqpr:pixel-5}, gives $\d_H(\partial \cO, \partial \cO_n) \to 0$.

By (\ref{eq:pixel-dH-dAn-dO}) and regularity of $\cO$,
\begin{equation}\label{eqpr:pixel-8}
  \d_H(A_n,\cO) \to 0 \quad \text{and} \quad \d_H(E^c_n,\cO)  \to 0
\end{equation}
so, in particular, $\d_H(A_n,E_n^c) \to 0$. 
Using this combined with the inclusions \eqref{eqpr:pixel-1} and inequality (\ref{eq:dH-An-An-tilde}) for $\d_H(A_n,\tilde{A}_n)$, we have
\begin{align*}
 \d_H(E_n^c,\cO_n)  = \sup_{x \in E_n^c} \dist(x, \cO_n) \leq \sup_{x \in E_n^c } \dist(x, \tilde{A}_n) \to 0.
\end{align*}
Combining this with the second limit in (\ref{eqpr:pixel-8}), shows that $\d_H(\cO, \cO_n) \to 0$, completing the proof. 
\end{proof}

\section{Arithmetic algorithms for the spectral problem}\label{sec:spectral}

In this section we study the computability of the Dirichlet spectrum. Subsections \ref{subsec:spec-algorithm1} and \ref{subsec:spec-algorithm2} are devoted to the proof of Theorem \ref{thm:spec-exist}, whereas Section \ref{subsec:spec-counter} provides a counterexample that proves Proposition \ref{prop:spec-not-exist}. We begin with the study of generalised matrix eigenvalue problems which arise naturally from finite element approximations.

\red{In this section, it is convenient for us to explicitly indicate the dependence of arithmetic algorithms on the evaluation set.
  Precisely, an arithmetic algorithm $\Gamma:\Omega \to \mathcal M$ with input $\Lambda$ refers to  a tower of arithmetic algorithms of height 0
  for a computational problem $(\Omega,\Lambda,\mathcal M,\Xi)$.}

\subsection{Matrix pencil eigenvalue problem}\label{subsec:spec-algorithm1}

First, we show that there exists a family of arithmetic algorithms capable of solving the \textit{computational matrix pencil eigenvalue problem} to
arbitrary specified precision. 
Let  
\begin{equation*}
\Omega_{\textrm{mat},M} := \set*{(A,B) \in  (\R^{M \times M})^2: \, A, B \text{ symmetric, } B \text{ positive definite}},
\end{equation*}  
 \begin{equation*}
 \Lambda_{\mat,M} := \set*{ (A,B) \mapsto A_{j,k} + \mathrm{i} B_{j,k} : A_{j,k}, B_{j,k} \text{ matrix element of }A,B \text{ resp.}}.
\end{equation*}
and
\begin{equation*}
\Lambda_{\mat,M}^\epsilon := \Lambda_{\mat,M} \cup \set{(A,B) \mapsto \epsilon}.
\end{equation*} 
Applying a-posteriori bounds of Oishi \cite{Oishi2001} for the matrix eigenvalue problem gives the following family of arithmetic algorithms:
\begin{lemma}\label{lem:alg-matrix}
 For each $\epsilon > 0$ and $m \in \N$, there exists an arithmetic algorithm $\Gamma_{\mat,M}^\epsilon: \Omega_{\mat,M} \to \R^M$ with input $\Lambda_{\mat,M}^\epsilon$, such that 
 \begin{equation*}
  |\Gamma^\epsilon_{\mat,M}(A,B)_k - \lambda_k| \leq \epsilon \quad \text{for all} \quad (A,B) \in \Omega_{\mat,M} \quad \text{and} \quad k\in \set{1,...,M},
 \end{equation*}
 where $\lambda_k$ denotes the eigenvalues of the matrix pencil $(A,B)$.
\end{lemma}
\begin{proof}

Since $\Lambda_{\mat,M}^\epsilon$ is a finite set, we can define the information available to the (arithmetic) algorithm $\Gamma_{\mat,M}^\epsilon$ as $\Lambda_{\Gamma_{\mat,M}^\epsilon}(A,B) = \Lambda_{\mat,M}^\epsilon$.
By Gaussian elimination, the matrix elements of $B^{-1}$ can be computed with a finite number of arithmetic operations.
Then the eigenvalues of the matrix pencil $(A,B)$ are exactly the eigenvalues $\br{\lambda_k}_{k=1}^M$  of $E:= B^{-1}A$ and the matrix elements of $E$ are 
accessible to the algorithm.

By the Jacobi eigenvalue algorithm (cf. \cite{Schonhage}) there exists a family of approximations $(\tilde{\lambda}_k^m,\tilde{x}_k^m) \in \R \times \R^M$, $ k \in \set{1,...,M},\, m \in \N$, such that
$(\tilde{\lambda}_k^m,\tilde{x}_k^m)$ can be computed with finitely many arithmetic operations and
 \begin{equation}\label{eq:mat-error-to-zero}
  \norm{P_m^T D_m P_m - E}_ F + \norm{P_m^T P_m - I}_F \to 0 \quad \text{as} \quad m \to \infty
\end{equation}  
where
\begin{equation*}
  D_m := \diag(\tilde{\lambda}_1^m,...,\tilde{\lambda}_M^m) \quad \text{and} \quad P_m := (\tilde{x}_1^m,...,\tilde{x}_M^m).
\end{equation*}
Here, $\norm{\cdot}_F$ denotes the Frobenius matrix norm.
Note that (\ref{eq:mat-error-to-zero}) is equivalent to the statement that $(\tilde{\lambda}_k^m)_{k=1}^M$ tends to $(\lambda_k)_{k=1}^M$ as $m \to \infty$ and
$(\tilde{x}_k^m)_{k=1}^M$ tends to the corresponding orthonormal basis of eigenvectors.
By \cite[Theorem 2]{Oishi2001},
\begin{equation*}
  |\lambda_k - \tilde{\lambda}_k^m| \leq |\tilde{\lambda}_k^m| \norm{P_m^T P_m - I}_F + \norm{P_m^T D_m P_m - E}_F =: \mathcal{E}_k(m)
\end{equation*}  
for all $k \in \set{1,...,M}$.
Let $m(\epsilon)$ denote the smallest positive integer such that $\mathcal{E}_k(m(\epsilon)) \leq \epsilon$ for all $k \in \set{1,...,M}$.
$m(\epsilon)$ can be determined by the algorithm since, for each $m$, $\mathcal{E}_k(m)$ can be computed in finitely many arithmetic operations.
The proof is completed by letting
\begin{equation*}
\Gamma_{\mat,M}^\epsilon(A,B) := (\tilde{\lambda}_k^{m(\epsilon)})_{k=1}^M. 
\end{equation*} 
\end{proof}

\subsection{Algorithm for computing Laplacian eigenvalues}\label{subsec:spec-algorithm2}

Next, we show that there exists a family of arithmetic algorithms capable of computing, to arbitrary specified precision,
the spectrum of the Dirichlet Laplacian on domains of the form 
\begin{equation}\label{eq:gen-pix-form}
 \mathcal{U} = \intt \br*{ \bigcup_{j = 1}^N  \left( x_j + \left[ - \tfrac{1}{2n}, \tfrac{1}{2n}\right]^2 \right) }
\end{equation}
with $n, N \in \N$ and $(x_1,...,x_N) \in (\Z^2_n)^N$. 
Let 
\begin{equation*}
 \Omega_{\pix,n} := \set*{ \mathcal{U} \subset \R^2 : \exists N \in \N \text{ and }(x_1,...,x_N) \in (\Z^2_n)^N \text{ such that \eqref{eq:gen-pix-form} holds} },
\end{equation*} 
\begin{equation*}
 \Lambda_{\pix,n} := \set*{ \cU \mapsto \chi_\cU(x): x \in \Z_n^2 }.
\end{equation*}
and
\begin{equation}
  \Lambda_{\pix,n}^\epsilon := \Lambda_{\pix,n} \cup  \set*{ \cU \mapsto N(\cU) } \cup \set*{\cU \mapsto \epsilon},
\end{equation} 
where $N(\cU):=|\cU \cap \Z_n^2|$ denotes the number of ``pixels'' that make up $\cU$.
The results of Liu and Oishi \cite{LiuOishi2013} combined with Lemma \ref{lem:alg-matrix} yield the following:
\begin{lemma}\label{lem:alg-pix}
 Let $n\in\N$ be fixed. For each $\epsilon > 0$, there exists an arithmetic algorithm $\Gamma_{\pix,n}^\epsilon:\Omega_{\pix,n} \to \cl(\C)$ with input $\Lambda_{\pix,n}^\epsilon$, such that 
 \begin{equation*}
  \dAW{\Gamma_{\pix,n}^\epsilon(\cU)}{\sigma(\cU)} \leq \epsilon \quad \text{for all} \quad \cU \in \Omega_{\pix,n}.
 \end{equation*}
\end{lemma}

\begin{proof}
 
 First, we fix a finite subset $\Lambda_{\Gamma_{\pix, n}^\epsilon}(\cU) \subset \Lambda_{\pix, n}^\epsilon$ which defines the information available to the (arithmetic) algorithm $\Gamma_{\pix, n}^\epsilon$, which we aim to construct.
 Choose 
 \begin{equation*}
    \Lambda_{\Gamma_{\pix,n}^\epsilon}(\cU) := \set{ \cU \mapsto \chi_\cU(x) : x\in \Z_n^2 \cap \overline{B}_{\kappa(\cU)}(0)} \cup  \set*{ \cU \mapsto N(\cU) } \cup \set*{\cU \mapsto \epsilon},
 \end{equation*}  
 where $\kappa(\cU)$ is defined as the smallest positive integer such that $|\overline{B}_{\kappa(\cU)}(0) \cap \cU \cap \Z_n^2| = N(\cU)$.
 The consistency hypothesis (cf. Definition \ref{def:Algorithm}) holds because  $\kappa(\cU)$ can be computed with a finite number of arithmetic computations
 from subsets of $\set{ f(\cU) : f \in \Lambda_{\Gamma_{\pix,n}^\epsilon}(\cU)}$.
  With this choice, the algorithm has access to the list $(x_1,...,x_N) \in (\R^2)^N$ for which \eqref{eq:gen-pix-form} holds.  
 
 \begin{figure}
 	\centering
 	\includegraphics[width=0.7\textwidth]{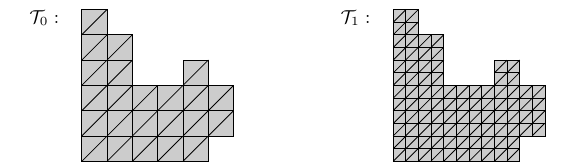}
 	\caption{Sketch of the triangulation $\mathcal T_m$ and refinement for $m\in\{0,1\}$ and fixed $n\in\N$.}
 	\label{fig:refinement}
 \end{figure}
 
 Using this list, we can construct, for each $m \in \N$, a uniform triangulation $\mathcal{T}^m$ of $\cU$ such that the elements of $\mathcal{T}^m$ have diameter $\sqrt{2}/(n2^{m}) =: \sqrt{2}h$ (cf. Figure \ref{fig:refinement}).
 Let $V^m \subset H^1_0(\cU)$ denote the piecewise-linear continuous finite element space for the triangulation $\mathcal{T}^m$. 
 Let $\set{\phi_k^m}_{m = 1}^{M_0}$ denote the basis of `hat' functions for $V^m$, where $M_0 := \dim(V^m)$.
 Let $\set{\lambda_k}_{k=1}^\infty$ denote the Dirichlet eigenvalues of the domain $\cU$, ordered such that $\lambda_k \leq \lambda_{k+1}$ for all $k \in \N$. 
 
 The Ritz-Galerkin finite element approximations for $\set{\lambda_k}_{k=1}^\infty$ are the eigenvalues $\set{\lambda_k^m}_{k=1}^{M_0}$, $m \in \N$, of the matrix pencil $(A^m, B^m)$,
 where the matrix elements of the matrices $A^m$ and $B^m$ read
 \begin{equation*}
  A_{j,k}^m := \inner{\nabla \phi_j^m,\nabla \phi_k^m}_{L^2(\cU)} \quad \text{and} \quad B_{j,k}^m := \inner{ \phi_j^m,\ \phi_k^m}_{L^2(\cU)} \qquad (j,k \in \set{1,...,M_0})
 \end{equation*}
 respectively.
These matrix elements can be computed from the information $\br*{x_1,...,x_N}$, $n$ and $m$ with a finite number of arithmetic computations.
Note that $A$ and $B$ are symmetric and $B$ is positive definite.

 In \cite{LiuOishi2013}, the authors introduce a quantity 
 \begin{equation*}
  q^m := l^m + (C_0/n2^{m})^2, 
 \end{equation*}
 where,
\begin{itemize}
 \item $C_0 > 0$ can be bounded above by an explicit expression \cite[Section 2]{LiuOishi2013} and,
 \item $l^m$ is the maximum eigenvalue of a matrix pencil $(D^m,E^m)$ \cite[eq. (3.22)]{LiuOishi2013}. Here, the matrix elements of $D^m$ and $E^m$ are
   explicitly constructed from inner products between overlapping basis functions of piecewise-linear finite element spaces on $\mathcal{T}^m$ and hence can be computed with
   a finite number of arithmetic computations on $(x_1,...,x_N)$, $n$ and $m$. Also, $E^m$ is diagonal and positive definite.
\end{itemize}
By \cite[Remark 3.3]{LiuOishi2013}, it holds that $ q^m \to 0 \quad \text{as} \quad m \to \infty$.
 \cite[Theorem 4.3]{LiuOishi2013} states that, for each $m \in \N$ and each $k \in \set{1,...,M_0}$, if $q^m \lambda_k^m < 1$, then 
 \begin{equation}\label{eq:eig-apost}
  \lambda_k^m / (1 + \lambda_k^m q^m) \leq \lambda_k \leq \lambda_k^m.
 \end{equation}
 
 Since the matrix elements of $A^m,B^m, D^m$ and $E^m$ are available to the algorithm $\Gamma_{\pix,n}^\epsilon$, by Lemma \ref{lem:alg-matrix}, the approximations
 \begin{equation*}
  \lambda_k^{m,\delta} := \Gamma_{\mat,M_0}^\delta(A^m,B^m)_k \qquad \text{and} \qquad q^{m,\delta} := \Gamma_{\mat,M_0}^\delta(D^m,E^m)_{M_0}
 \end{equation*}
 are also available to $\Gamma^\epsilon_{\pix,n}$, for any $\delta > 0$.
 These approximations provide upper and lower bounds for $\lambda_k^m$ and $q^m$
 \begin{equation}\label{eq:mateig-interval}
  \lambda_k^m \in [\lambda_k^{m,\delta} - \delta, \lambda_k^{m,\delta} + \delta] , \qquad q^m \in [q^{m,\delta} - \delta, q^{m,\delta} + \delta].
 \end{equation}

 We claim that if $(M,m,\delta) \in \N \times \N \times \R_+$ is such that $(\lambda_M^{m,\delta} + \delta)(q^{m,\delta} + \delta) < 1$, then
 \begin{equation}\label{eq:dAW-apost}
  \dAW{\set{\lambda_k^{m,\delta}}_{k=1}^M}{\sigma(\cU)} \leq \delta + \mathcal{E}_1(M,m,\delta) + \mathcal{E}_2(M,m,\delta)
 \end{equation} 
 where
 \begin{equation*}
  \mathcal{E}_1(M,m,\delta) := \max \set*{(q^{m,\delta} + \delta )(\lambda^{m,\delta}_k + \delta)^2 /(1 + (q^{m,\delta} - \delta)_+ (\lambda^{m,\delta}_k - \delta)_+): k \in \set{1,...,M}}
 \end{equation*}
 and
 \begin{equation*}
  \mathcal{E}_2(M,m,\delta) := 2^{  - (\lambda_M^{m,\delta} - \delta)/2 + 1 }.
 \end{equation*}
 To see this, first note that, using the formula
 \begin{equation*}
   \dAW{A}{B} \leq \d_H(A,B) \qquad (A,B \subset \C \textrm{ bounded}),
 \end{equation*}
 we have
\begin{align}\label{eq:dAW-apost-1}
  \mathrm{d_{AW}}\big( \set{ \lambda_k^{m,\delta} }_{k=1}^M & , \sigma(\cU) \big)
   \leq \d_H(\set{\lambda_k^{m,\delta}}_{k=1}^M,\set{\lambda_k^m}_{k=1}^M) +  \dAW{ \set{ \lambda_k^m }_{k=1}^M }{\sigma( \cU) }
   \nonumber 
  \\ 
  &\leq \delta + \sum_{j=1}^{\floor{\lambda_M}} 2^{-j} \min \set*{ 1 , \sup_{|x| \leq j} \abs*{ \dist \br*{ x, \set{\lambda_k^m}_{k=1}^M }
    - \dist \br*{ x, \set{\lambda_k}_{k=1}^\infty } } } + \sum_{j = \ceil{\lambda_M}}^\infty 2^{-j}.
\end{align} 
Since $ \dist(x , \set{ \lambda_k }_{k=1}^\infty ) =  \dist(x , \set{ \lambda_k }_{k=1}^M ) $ for $|x|\leq \floor{\lambda_M}$, the second term on the right hand side of \eqref{eq:dAW-apost-1} is bounded by
\begin{align*}
  \dAW{\set{\lambda_k^m}_{k=1}^M}{ \set{\lambda_k}_{k=1}^M } &\leq \d_H(\set{\lambda_k^m}_{k=1}^M, \set{\lambda_k}_{k=1}^M )\\
  &\leq \max \set*{|\lambda_k - \lambda_k^m| : k \in \set{1,...,M}}
  \leq \mathcal{E}_1(\delta,M,m)
\end{align*}
where the final inequality follows from \eqref{eq:eig-apost} and \eqref{eq:mateig-interval}.
Applying \eqref{eq:eig-apost} and \eqref{eq:mateig-interval} again and noting that the condition $(\lambda_M^{m,\delta} + \delta)(q^{m,\delta} + \delta) < 1$ ensures that $\lambda^m_M q^m \leq 1$,
we have
\begin{equation*}
\sum_{j = \ceil{\lambda_M}}^\infty 2^{-j} \leq 2^{- \lambda_M + 1} \leq 2^{- \lambda^m_M/2 + 1} \leq \mathcal{E}_2(M,\delta,m),
\end{equation*} 
bounding the third term on the right hand side of \eqref{eq:dAW-apost-1} and proving \eqref{eq:dAW-apost}.

Let $\delta(M) := 1/M$.
Define $m(M)$ as the smallest $m \in \N$ such that
\begin{equation*}
  (q^{m,\delta(M)} + \delta(M) )(\lambda^{m,\delta(M)}_M + \delta(M))^2 \leq 1/M.
\end{equation*} 
Then, $\mathcal{E}_1(M, m( M), \delta( M)) \leq 1/M$.
For each $M \in \N$, $m(M)$ can be determined by the algorithm since it can compute the quantities $ q^{m,\delta(M)}$ and $\lambda_M^{m,\delta(M)}$.
Define $M(\epsilon)$ as the smallest positive integer such that
\begin{equation*}
  \delta \circ M(\epsilon) + \mathcal{E}_1(M(\epsilon), m \circ M(\epsilon), \delta \circ M(\epsilon))
  + \mathcal{E}_2(M(\epsilon), m \circ M(\epsilon), \delta \circ M(\epsilon)) \leq \epsilon,
\end{equation*} 
where $m\circ M(\epsilon) = m(M(\epsilon))$ and $\delta\circ M(\epsilon) = \delta(M(\epsilon))$.
For each $\epsilon > 0$, $M(\epsilon)$ can be determined by the algorithm since $\mathcal E_1$ and $\mathcal E_2$ can be computed.
The proof of the lemma is completed by letting
\begin{equation}\label{eq:pix-alg-defn}
\Gamma_{\pix,n}^\epsilon(\cU) := \big\{ \lambda_k^{m \circ M(\epsilon), \delta \circ M(\epsilon)} \big\}_{k=1}^{M(\epsilon)}.
\end{equation}  

\end{proof}
\begin{remark}
	The results of \cite{LiuOishi2013} are formulated for connected domains only. While this assumption is not necessarily satisfied for domains in $\Omega_{\pix,n}$, the results from \cite{LiuOishi2013} can be applied to every connected component of a set $\cU \in \Omega_{\pix,n}$ separately. This is justified, because the Dirichlet spectrum of $\cU$ is simply the union of the Dirichlet spectra of all connected components of $\cU$. Moreover, any $\cU \in \Omega_{\pix,n}$ consists of only finitely many connected components, which can be determined in a finite number of steps from the information given in $\Lambda_{\pix,n}$.
\end{remark}
We have shown that the Dirichlet spectrum of an arbitrary (but fixed) pixelated domain from $\Omega_{\pix,n}$ is computable via an arithmetic algorithm. It remains to pass from pixelated domains to arbitrary domains in $\Om_1$ (recall \eqref{eq:Omega1_def}). 
To this end, we combine the Mosco convergence results from Section \ref{sec:gener-mosco-conv} with the following proposition.
\begin{prop}\label{prop:alg-mosco}
  Let
  \begin{equation*}
    \Omega_M := \set*{ \cO \subset \R^2: \cO \text{\rm{ open, bounded and }}\cO_n \Mto \cO \text{\rm{  where }}\cO_n\text{\rm{  pixelated domains for }}\cO }.
  \end{equation*}
  Then there exists a sequence of arithmetic algorithms $\Gamma_n:\Omega_M \to \cl(\C)$, $n \in \N$, with input $\Lambda_0$ such that 
 \begin{equation*}
  \dAW{\Gamma_n(\cO) }{\sigma(\cO)} \to 0 \quad \text{as} \quad n \to \infty \qquad \text{for all} \qquad  \cO \in \Omega_M. 
 \end{equation*}
\end{prop}

\begin{proof}
Let $\cO \in \Omega_M$. 
Let $\cO_n \subset \R^2$, $n \in \N$, denote the corresponding pixelated domains (cf. Definition \ref{defn:pixel}). 

We shall construct a family of (arithmetic) algorithms $\Gamma_n : \Omega_1 \to \cl(\C)$ with input $\Lambda_0$.
First, we define the information available to each algorithm $\Gamma_n$, by fixing a finite subset $\Lambda_{\Gamma_n} \subset \Lambda_0$.
Let 
\begin{equation*}
 \Lambda_{\Gamma_n} := \set*{ \cO \mapsto \chi_\cO(x): x \in \Z^2_n \text{ with } |x| \leq n},
\end{equation*}
so that the algorithm $\Gamma_n$ has access to the set $\set{x_1,...,x_N} := \cO \cap B_n(0) \cap \Z_n^2$.

Let 
\begin{equation*}
 \tilde{\cO}_n := \intt \br*{ \bigcup_{j=1}^N \Big( x_j + \left[-\tfrac{1}{2n},\tfrac{1}{2n} \right]^2 \Big) }.
\end{equation*}
It holds that $\tilde{\cO}_n \in \Omega_{\pix,n}$ for each $n$ and, since $\cO$ is bounded, $\tilde{O}_n = \cO_n$ for all sufficiently large $n$.
Hence using the hypothesis that $\cO_n \Mto \cO$, $\tilde{\cO}_n$ converges to $\cO$ in the Mosco sense as $n \to \infty$. 
By Lemma \ref{lem:mosco-implies-Hauss}, 
\begin{equation*}
 \dAW{ \sigma(\tilde{\cO}_n)}{\sigma(\cO)} \to 0 \quad \text{as} \quad n \to \infty.
\end{equation*}
Since $\tilde{\cO}_n$ is defined entirely by $\Lambda_{\Gamma_n}$, we can define
\begin{equation*}
 \Gamma_n(\cO) = \Gamma_{\pix,n}^{1/n}(\tilde{O}_n)
\end{equation*}
for each $n \in \N$.
Note that the consistency property Definition \ref{def:Algorithm} (ii) holds trivially since $\Lambda_{\Gamma_n}$ does not depend on $\cO$.
The proposition is proved by the fact that 
\begin{align}
 \dAW{\Gamma_{\pix,n}^{1/n}(\tilde{\cO}_n)}{\sigma( \cO)} & \leq \dAW{\Gamma_{\pix,n}^{1/n}(\tilde{\cO}_n)}{\sigma(\tilde{\cO}_n)} + \dAW{\sigma(\tilde{\cO}_n)}{\sigma(\cO)} 
 \nonumber
 \\
 & \leq \frac{1}{n} + o(1) \to 0 \quad \text{as} \quad n \to \infty.
 \label{eq:final_error}
\end{align} 
\end{proof}
Theorem \ref{thm:spec-exist} now follows by combining Proposition \ref{prop:alg-mosco} with Theorem \ref{th:mosco} and Proposition \ref{prop:pixel}.

\begin{remark}
	From the perspective of the SCI hierarchy it is natural to ask whether any finer classification might be possible, specifically in terms of explicit error bounds. This remains an open question and it is clear that our method cannot yield any explicit error bounds for general domains in $\Om_1$. This is perhaps most clearly demonstrated in the final error bound \eqref{eq:final_error}. The behaviour $\mathrm{d_{AW}}(\sigma(\tilde{\cO}_n),\sigma(\cO)) = o(1)$ on the right hand side of \eqref{eq:final_error} was deduced from the Mosco convergence $\cO_n\xrightarrow{\mathrm{M}} \cO$, for which no uniform convergence rate can be expected in general. 
	
	While the question remains open, the fact that no uniform regularity assumption (such as H\"older continuity with fixed exponent) was made on $\del\cO$  suggests that no explicit error bound is possible.
\end{remark}

\subsection{Counter-example}\label{subsec:spec-counter}

In this section we give a counterexample showing that if the regularity assumptions on the domain are relaxed too much, computability fails to be true, i.e. the Dirichlet spectrum cannot be computed by a single sequence of algorithms \red{any more} and at least two limits are necessary. 
Proposition \ref{prop:spec-not-exist} follows immediately from the following result. 
 
\begin{prop}\label{prop:counterexample}
  Let $\Gamma_n: \Omega_0 \to \mathcal{M}$,  $n \in \N$, be any family of arithmetic algorithms with input $\Lambda_0$.
  Then, for any $\cO \in \Omega_0$ with $\vol(\partial\cO) = 0$ and any $\epsilon > 0$, there exists $\cO_\epsilon \in \Omega_0$ with $\cO_\epsilon \subseteq  \cO$ and $\vol(\cO_\epsilon) \leq \epsilon$ such that
  $\Gamma_n(\cO) = \Gamma_n(\cO_\epsilon)$ for all $n$ and, for sufficiently small $\epsilon > 0$, $\sigma(\cO) \neq \sigma(\cO_\epsilon)$. 

\end{prop}

\begin{proof}
	Let $\Gamma_n$ be as hypothesised, let $\eps>0$ and let $\cO \in \Om_0$.
        Define the geometric quantity
	\begin{align*}
		r_{\mathrm{int}}(\cO) &:= \sup\{r>0\,: \,\exists\, \text{square }[s,s+r]\times[t,t+r]\subset \cO \}.
	\end{align*}  
	Openness of $\cO$ implies that $r_{\mathrm{int}}(\cO)>0$.
	For any fixed $n$, $\Gamma_{n}(\cO)$  depends only on finitely many values of $\chi_\cO(x)$, say $x_1^n,\dots,x_{k_n}^n$.
	We assume without loss of generality that the set $\{x_1^n,\dots,x_{k_n}^n\}$ is growing with $n$, i.e. that $\{x_1^n,\dots,x_{k_n}^n\}\subset \{x_1^{n+1},\dots,x_{k_{n+1}}^{n+1}\}$ for all $n$.
        Thus, we may drop the superscript $n$ and merely write $\{x_1,\dots,x_{k_n}\}$. Let us denote by $\{y_1,\dots,y_{l_n}\}$ the subset of points for which $\chi_\cO(y_i)=1$.
	Now, define a new domain $\cO_\eps$ as follows. For $t>0$ define the strips
	$S_t^k := \big(\big((y_k)_1-\f t2,\, (y_k)_1+\f t2\big)\times \R\big)\cap \cO$. Next, let
	\begin{align*}
		\cO_\eps^n := \bigcup_{k=1}^{l_n} S_{2^{-k}\eps}^k \quad \text{and} \quad \cO_\eps := \bigg(\bigcup_{n=1}^\infty \cO_\eps^n\bigg) \cup \col{\epsilon} \cO
	\end{align*}      
	where, recall that $\col{\epsilon} \cO = \{x\in \cO\,:\,\dist(x,\del \cO)<\epsilon\}$. Note that $\cO_\eps$ is bounded, open and connected for any $\eps>0$.
	One has $\chi_\cO(x_k) = \chi_{\cO_\eps}(x_k)$ for all $k\in\{1,\dots,{k_n}\}$ and all $n\in\N$, and therefore, by consistency of algorithms
        (cf. Definition \ref{def:Algorithm} (ii)), $\Gamma_{n}(\cO_\eps) = \Gamma_{n}(\cO)$ for all $n\in\N$.
	
	However, it is easily seen from the min-max principle that for the lowest eigenvalue  $\lambda_1(\cO)$, of $- \Delta_{\cO}$, one has
	\begin{align*}
		\lambda_1(\cO)\leq \f{\pi^2}{r_{\mathrm{int}}(\cO)^2},
	\end{align*}
        Next, we use Poincar\'e's inequality \cite[eq. (7.44)]{GT} to get
	\begin{align*}
		\| u\|_{L^2(\cO_\eps)} \leq C\vol(\cO_\eps)^{\f12} \|\nabla u\|_{L^2(\cO_\eps)}
		\leq C\bigg(\vol(\col{\epsilon} \cO) + \sum_{k=1}^\infty 2^{-k}\eps \diam(\cO) \bigg)^\f12\|\nabla u\|_{L^2(\cO_\eps)}
	\end{align*}
        for some constant $C > 0$ independent of $\epsilon$ and all $u \in H^1_0(\cO_\epsilon)$. Since $\vol(\col{\epsilon} \cO) \to 0$ as $\epsilon \to 0$ by continuity of measures,
        we conclude using the min-max principle that $\lambda_1(\cO_\epsilon) \to \infty$ as $\epsilon \to 0 $ and hence $\sigma(\cO) \neq \sigma(\cO_\epsilon)$ for small enough $\epsilon > 0$.
\end{proof}
\begin{remark}
The counterexample in the proof of Proposition \ref{prop:counterexample} is pathological in the sense that the complement of the domain $\cO_\eps$ may have
infinitely many connected components. This is not crucial. Indeed, one can easily construct a counterexample whose complement has only one connected component:
Let $\cO=(0,1)^2$ be the unit square and 
	\begin{align*}
		\cO_\eps &:= \bigg(\bigcup_{n=1}^\infty \bigcup_{k=1}^{k_n} S_{2^{-k}\eps}^k\bigg) \cup \big((0,1)\times(0,\eps)\big),
	\end{align*}
with the notation from the previous proof. Then $\Gamma_n(\cO_\eps) = \Gamma_n(\cO)$ for all $n$ and $\sigma((0,1)^2)\neq\sigma(\cO_\eps)$ for sufficiently small $\epsilon > 0$ while $ \cO_\eps^c$ is connected. 
\end{remark}

%
%


%
%
%

\section{Numerical Results}\label{sec:numerical-results}
\begin{figure}[h!]
	\centering
	\includegraphics[width=0.45\textwidth, height=0.3\textwidth]{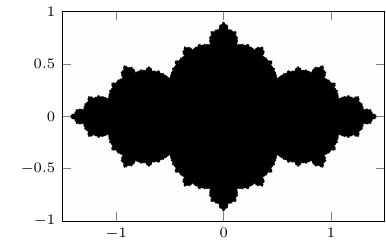}
	\includegraphics[width=0.48\textwidth, height=0.3\textwidth]{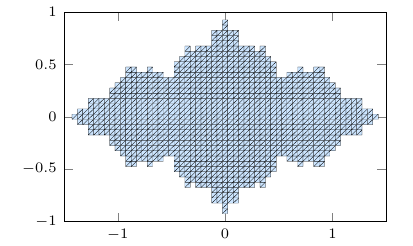}
	\caption{The domain $\cO$ \red{defined by equations (\ref{eq:K-julia}) and  (\ref{eq:O-julia})} (left)
          and \red{a mesh of} \red{a corresponding pixelation approximation} $\cO_n$ for $n = 20$ and $h=1/n$ (right).}
	\label{fig:Julia_O}
      \end{figure}

      \begin{figure}[h!]
	\centering


\pgfplotsset{every tick label/.append style={font=\footnotesize},
	title style={font=\footnotesize}}
 \pgfplotsset{compat=1.3} 


\definecolor{mycolor1}{rgb}{0.00000,0.44700,0.74100}%
\definecolor{mycolor2}{rgb}{0.78331,0.87928,0.12540}%
\definecolor{mycolor3}{rgb}{0.85000,0.32500,0.09800}%
\definecolor{mycolor4}{rgb}{0.92900,0.69400,0.12500}%
\definecolor{mycolor5}{rgb}{0.49400,0.18400,0.55600}%
\definecolor{mycolor6}{rgb}{0.46600,0.67400,0.18800}%
\definecolor{mycolor7}{rgb}{0.30100,0.74500,0.93300}%
\definecolor{mycolor8}{rgb}{0.63500,0.07800,0.18400}%
\definecolor{mycolor9}{rgb}{0.30415,0.76470,0.41994}%
\definecolor{mycolor10}{rgb}{0.12183,0.58905,0.54562}%
\definecolor{mycolor11}{rgb}{0.19236,0.40320,0.55584}%
\definecolor{mycolor12}{rgb}{0.27713,0.18523,0.48990}%

\setlength\figurewidth{0.93\linewidth}
\setlength\figureheight{0.2\figurewidth}

\begin{tikzpicture}

\begin{axis}[%
width=\figurewidth,
height=\figureheight,
at={(0,0)},
scale only axis,
xmin=10.715,
xmax=11.23,
ymin=1,
ymax=6,
yticklabels={20,40,80,160},
ytick={2,3,4,5,6},
xlabel={$\lambda_0^m$},
ylabel={$n$},
ylabel shift = -1pt,
axis background/.style={fill=white},
legend style={at={(0.48,-0.4), \footnotesize, color=black}, anchor=north, legend columns=30, legend cell align=left, align=left, draw=white!15!black}
]

\addlegendimage{only marks, mark=o}
\addlegendimage{only marks, mark=triangle}
\addlegendimage{only marks, mark=triangle, mark options={rotate=180}}
\addlegendimage{only marks, mark=diamond}
\addlegendimage{only marks, mark=triangle, mark options={rotate=90}}
\addlegendimage{only marks, mark=pentagon}
\addlegendimage{only marks, mark=star}

%

\addplot [color=mycolor3, draw=none, mark size=1.6pt, only marks, mark=o, mark options={solid, fill=mycolor2, mycolor2}]
  table[row sep=crcr]{%
12.1664832191615	1\\
};
\addlegendentry{$h=0.050 $\quad}

\addplot [color=mycolor4, draw=none, mark size=1.8pt, only marks, mark=triangle , mark options={solid, fill=mycolor2, mycolor2}]
  table[row sep=crcr]{%
11.9764586561916	1\\
};
\addlegendentry{$h=0.025 $\quad}

\addplot [color=mycolor5, draw=none, mark size=1.8pt, only marks, mark=triangle , mark options={solid, rotate=180, fill=mycolor2, mycolor2}]
  table[row sep=crcr]{%
11.9052332374085	1\\
};
\addlegendentry{$h=0.013 $\quad}

\addplot [color=mycolor6, draw=none, mark size=1.7pt, only marks, mark=diamond , mark options={solid, fill=mycolor2, mycolor2}]
  table[row sep=crcr]{%
11.8780421469148	1\\
};
\addlegendentry{$h=0.006 $\quad}

\addplot [color=mycolor7, draw=none, mark size=1.8pt, only marks, mark=triangle , mark options={solid, rotate=90, fill=mycolor2, mycolor2}]
  table[row sep=crcr]{%
11.8675281793742	1\\
};
\addlegendentry{$h=0.003 $\quad}

\addplot [color=mycolor8, draw=none, mark size=1.6pt, only marks, mark=pentagon , mark options={solid, fill=mycolor2, mycolor2}]
  table[row sep=crcr]{%
11.8634265786792	1\\
};
\addlegendentry{$h=0.002$\quad}

\addplot [color=mycolor1, draw=none, mark size=1.6pt, only marks, mark=star, mark options={solid, fill=mycolor2, mycolor2}]
  table[row sep=crcr]{%
11.8618168752569	1\\
};
\addlegendentry{$h=0.001 $}

\addplot [color=mycolor3, draw=none, mark size=1.6pt, only marks, mark=o , mark options={solid, fill=mycolor9, mycolor9}]
  table[row sep=crcr]{%
11.2226897308728	2\\
};

\addplot [color=mycolor4, draw=none, mark size=1.8pt, only marks, mark=triangle , mark options={solid, fill=mycolor9, mycolor9}]
  table[row sep=crcr]{%
11.050392810324	2\\
};

\addplot [color=mycolor5, draw=none, mark size=1.8pt, only marks, mark=triangle , mark options={solid, rotate=180, fill=mycolor9, mycolor9}]
  table[row sep=crcr]{%
10.9845184252621	2\\
};

\addplot [color=mycolor6, draw=none, mark size=1.7pt, only marks, mark=diamond , mark options={solid, fill=mycolor9, mycolor9}]
  table[row sep=crcr]{%
10.9591151036865	2\\
};

\addplot [color=mycolor7, draw=none, mark size=1.8pt, only marks, mark=triangle , mark options={solid, rotate=90, fill=mycolor9, mycolor9}]
  table[row sep=crcr]{%
10.9492558498216	2\\
};

\addplot [color=mycolor8, draw=none, mark size=1.6pt, only marks, mark=pentagon , mark options={solid, fill=mycolor9, mycolor9}]
  table[row sep=crcr]{%
10.945405126706	2\\
};

\addplot [color=mycolor1, draw=none, mark size=1.6pt, only marks, mark=star, mark options={solid, fill=mycolor9, mycolor9}]
  table[row sep=crcr]{%
10.9438934710698	2\\
};

\addplot [color=mycolor3, draw=none, mark size=1.8pt, only marks, mark=triangle , mark options={solid, fill=mycolor10, mycolor10}]
  table[row sep=crcr]{%
10.984437160706	3\\
};

\addplot [color=mycolor4, draw=none, mark size=1.8pt, only marks, mark=triangle , mark options={solid, rotate=180, fill=mycolor10, mycolor10}]
  table[row sep=crcr]{%
10.8980980918762	3\\
};

\addplot [color=mycolor5, draw=none, mark size=1.7pt, only marks, mark=diamond , mark options={solid, fill=mycolor10, mycolor10}]
  table[row sep=crcr]{%
10.8637348711317	3\\
};

\addplot [color=mycolor6, draw=none, mark size=1.8pt, only marks, mark=triangle , mark options={solid, rotate=90, fill=mycolor10, mycolor10}]
  table[row sep=crcr]{%
10.8502028322227	3\\
};

\addplot [color=mycolor7, draw=none, mark size=1.6pt, only marks, mark=pentagon , mark options={solid, fill=mycolor10, mycolor10}]
  table[row sep=crcr]{%
10.8448829658247	3\\
};

\addplot [color=mycolor8, draw=none, mark size=1.6pt, only marks, mark=star, mark options={solid, fill=mycolor10, mycolor10}]
  table[row sep=crcr]{%
10.8427884581298	3\\
};

\addplot [color=mycolor1, draw=none, mark size=1.8pt, only marks, mark=triangle , mark options={solid, rotate=180, fill=mycolor11, mycolor11}]
  table[row sep=crcr]{%
10.8321553672065	4\\
};

\addplot [color=mycolor3, draw=none, mark size=1.7pt, only marks, mark=diamond , mark options={solid, fill=mycolor11, mycolor11}]
  table[row sep=crcr]{%
10.7888517764149	4\\
};

\addplot [color=mycolor4, draw=none, mark size=1.8pt, only marks, mark=triangle , mark options={solid, rotate=90, fill=mycolor11, mycolor11}]
  table[row sep=crcr]{%
10.7711342107942	4\\
};

\addplot [color=mycolor5, draw=none, mark size=1.6pt, only marks, mark=pentagon , mark options={solid, fill=mycolor11, mycolor11}]
  table[row sep=crcr]{%
10.7640516602363	4\\
};

\addplot [color=mycolor6, draw=none, mark size=1.6pt, only marks, mark=star, mark options={solid, fill=mycolor11, mycolor11}]
  table[row sep=crcr]{%
10.7612476168865	4\\
};

\addplot [color=mycolor7, draw=none, mark size=1.7pt, only marks, mark=diamond , mark options={solid, fill=mycolor12, mycolor12}]
  table[row sep=crcr]{%
10.7613039424554	5\\
};

\addplot [color=mycolor8, draw=none, mark size=1.8pt, only marks, mark=triangle , mark options={solid, rotate=90, fill=mycolor12, mycolor12}]
  table[row sep=crcr]{%
10.7394608156734	5\\
};

\addplot [color=mycolor1, draw=none, mark size=1.6pt, only marks, mark=pentagon , mark options={solid, fill=mycolor12, mycolor12}]
  table[row sep=crcr]{%
10.730423703538	5\\
};

\addplot [color=mycolor3, draw=none, mark size=1.6pt, only marks, mark=star, mark options={solid, fill=mycolor12, mycolor12}]
  table[row sep=crcr]{%
10.7267918815195	5\\
};
\end{axis}

\end{tikzpicture}%
	\caption{FEM approximations $\lambda_0^m$ of \red{the} lowest eigenvalue \red{of $- \Delta_{\cO_n}$} for different values of $h=2^{-m}$ and $n$.}
	\label{fig:EVs}
\end{figure}
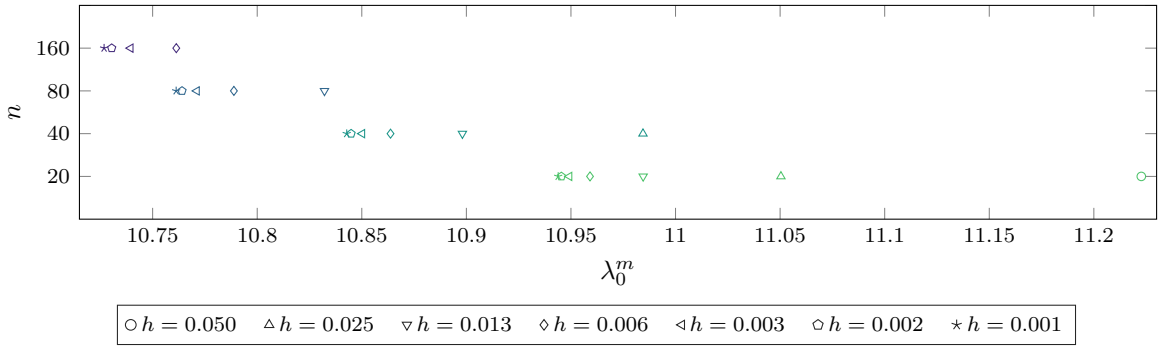

In this section we illustrate the abstract ideas from the previous sections with a concrete numerical example.
The closure of the domain we study belongs to the class of filled Julia sets described in Example \ref{ex:filled-julia} hence
the pixelation approximations converge in the Mosco sense and the sequence of arithmetic algorithms constructed
in Section \ref{subsec:spec-algorithm2} converge in the Attouch-Wets metric.
Numerical experiments for the Laplacian on filled Julia sets were also recently performed
in \cite{strichartzSpectralPropertiesLaplacians2020}.

We consider the filled Julia set $K$ defined by
\begin{align}\label{eq:K-julia}
	K = \left\{z\in\C \,:\, |f^{\circ n}(z)|\leq 2 \, \forall n\in\N \right\}, \quad\text{ where }\quad	f(z) = z^2+\f{\sqrt 5 - 1}{2}.
\end{align}
It can be shown that $K$ has a fractal boundary (cf. \cite{mandelbrot}).
Mandelbrot suggested the name ``San Marco Set'' for $K$, because it resembles the Basilica of Venice together with its reflection in a flooded piazza (see Figure \ref{fig:Julia_O}).

We implemented a finite element method on subsequent pixelated domains for
\begin{equation}
  \label{eq:O-julia}
  \cO=\intt(K)
\end{equation}
and computed approximations to the lowest eigenvalue with increasingly fine meshes. To be more precise,  for $n\in \{20,40,80,160\}$ \red{we consider a FEM approximation $\lambda_0^m$ to the lowest eigenvalue} of $-\Delta_{\cO_n}$ for meshes $\mathcal T^m$ with $m\in\{0,1,\dots,\f{160}{n}\}$ (or equivalently $h\in\{2^{-m}n^{-1}\,:\, m\in\{0,\dots,\f{160}{n}\}\}$, recall Section \ref{subsec:spec-algorithm2}).

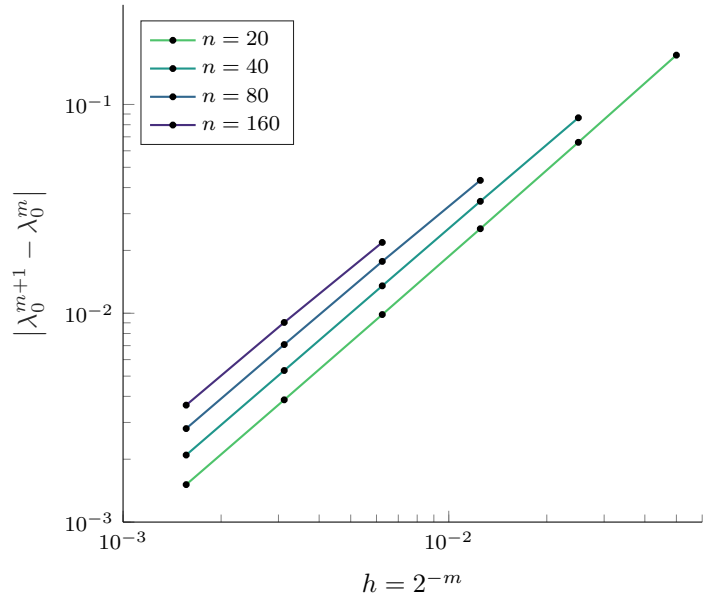
\begin{figure}[t]
	\centering
%
%
\pgfplotsset{every tick label/.append style={font=\footnotesize}}

\setlength\figurewidth{0.5\textwidth}

\definecolor{mycolor1}{rgb}{0.78331,0.87928,0.12540}%
\definecolor{mycolor2}{rgb}{0.30415,0.76470,0.41994}%
\definecolor{mycolor3}{rgb}{0.12183,0.58905,0.54562}%
\definecolor{mycolor4}{rgb}{0.19236,0.40320,0.55584}%
\definecolor{mycolor5}{rgb}{0.27713,0.18523,0.48990}%
\begin{tikzpicture}

\begin{axis}[%
width=\figurewidth,
height=0.9\figurewidth,
at={(1.193in,0.837in)},
scale only axis,
xmode=log,
xmin=0.001,
xmax=0.06,
xminorticks=true,
xlabel={$h=2^{-m}$},
ymode=log,
ymin=0.001,
ymax=0.3,
yminorticks=true,
ylabel style={font=\color{white!15!black}},
ylabel={$\big|\lambda^{m+1}_0 - \lambda^m_0\big|$},
axis background/.style={fill=white},
axis x line*=bottom,
axis y line*=left,
legend style={at={(0.03,0.97)}, anchor=north west, legend cell align=left, align=left, draw=white!15!black, font=\footnotesize}
]

\addplot [color=mycolor2, line width=0.8pt, mark size=0.9pt, mark=*, mark options={solid, black}]
  table[row sep=crcr]{%
0.05	0.172296920548757\\
0.025	0.0658743850618944\\
0.0125	0.0254033215756309\\
0.00625	0.00985925386492781\\
0.003125	0.00385072311557799\\
0.0015625	0.0015116556361594\\
};
\addlegendentry{$n=20$}

\addplot [color=mycolor3, line width=0.8pt, mark size=0.9pt, mark=*, mark options={solid, black}]
  table[row sep=crcr]{%
0.025	0.0863390688298544\\
0.0125	0.0343632207444955\\
0.00625	0.0135320389089646\\
0.003125	0.00531986639797211\\
0.0015625	0.00209450769494879\\
};
\addlegendentry{$n=40$}

\addplot [color=mycolor4, line width=0.8pt, mark size=0.9pt, mark=*, mark options={solid, black}]
  table[row sep=crcr]{%
0.0125	0.043303590791675\\
0.00625	0.0177175656206927\\
0.003125	0.00708255055786289\\
0.0015625	0.00280404334979067\\
};
\addlegendentry{$n=80$}

\addplot [color=mycolor5, line width=0.8pt, mark size=0.9pt, mark=*, mark options={solid, black}]
  table[row sep=crcr]{%
0.00625	0.0218431267820449\\
0.003125	0.00903711213543446\\
0.0015625	0.00363182201842527\\
};
\addlegendentry{$n=160$}

\end{axis}

\end{tikzpicture}%
	\caption{Double logarithmic plot of relative differences $|\lambda^{m+1}_0-\lambda^{m}_0|$ for $n\in\{20,40, 80,160\}$, $m\in\{0,\dots,160/n\}$.}
	\label{fig:conv_plot}
\end{figure}
\begin{figure}[t]
	\centering
	\includegraphics[width=0.99\textwidth]{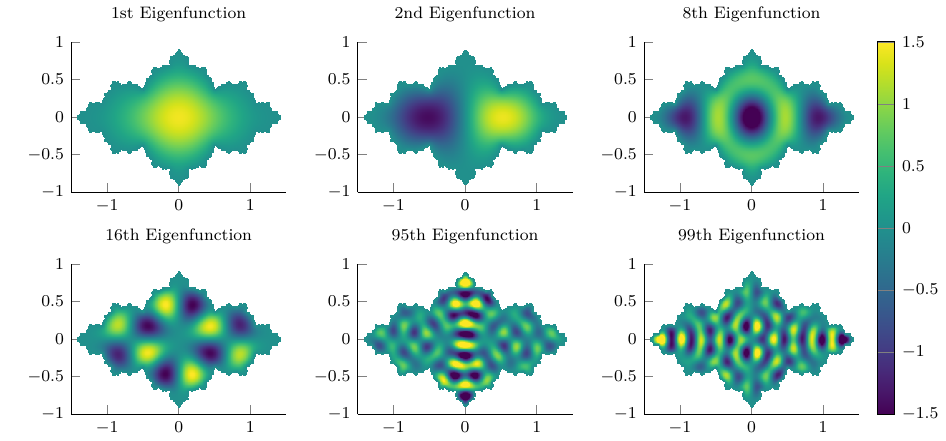}
	\caption{Selected approximated eigenfunctions of $\cO$ for $n=100$, $h=1/n$ (normalised such that $v_i^{\top} B^m v_i=1$).}
	\label{fig:efs}
\end{figure}

Because the emphasis of this paper is theoretical, implementing the full a-posteriori error computation of \cite{LiuOishi2013} would be beyond the scope of the current work. 
Instead, the approximation to the lowest eigenvalue in each case was computed using the Rayleigh-Ritz method for the pair $(A^m,B^m)$ of stiffness and mass matrix: the Rayleigh quotient $(v^{\top}A^m v)/(v^{\top}B^m v)$ was minimised via a straightforward gradient descent method. The gradient descent was iterated until the derivative of the Rayleigh quotient was less than $10^{-10}$. The results are shown in Figures \ref{fig:EVs} and \ref{fig:conv_plot}. The data points in Figure \ref{fig:EVs} suggest that for each fixed $n$ the refinement of the mesh leads to a convergent sequence of approximations. 

The decay of the successive differences between the $\lambda_0^m$ in Figure \ref{fig:conv_plot} suggests a convergence rate of approximately $1.33 \approx 2\cdot\f{2}{3}$, which is in accordance with the regularity of the pixelated domain, which has reentrant corners of angle $\f32\pi$ (cf. \cite{BBX}). Figure \ref{fig:conv_plot} also suggests a worsening of the convergence rate if both $n$ and $h$ are increased simultaneously. This is reflected in the fact that the lines in Figure \ref{fig:conv_plot} move to the left as $n$ increases. This degradation of the convergence rate is to be expected as the rough boundary of $\cO$ is better and better approximated by $\cO_n$ as $n$ increases.

For triangulations which are not prohibitively fine, the approximations of higher eigenvalues and eigenfunctions \red{also} can be computed. Figure \ref{fig:efs} shows 6 selected approximated eigenfunctions $v_1,v_2, v_8, v_{16}, v_{95}$ and $v_{99}$ for $n=100$, $h=n^{-1}$ (i.e. $m=0$). The approximations are normalised such that $v_i^{\top} B^m v_i=1$, where $B^m$ is the mass matrix associated with $\mathcal T^m$.

The Matlab Code that produced Figures \ref{fig:Julia_O} - \ref{fig:efs} is openly available at \url{https://github.com/frank-roesler/PixelSpectra} and can easily be adapted to arbitrary domains $\cO$.
%
%
%



\bibliographystyle{amsplain}
\bibliography{Citations.bib}


\end{document}